\RequirePackage{fixltx2e}
\documentclass[english]{article}
\usepackage[T1]{fontenc}
\usepackage[latin9]{inputenc}
\usepackage{float}
\usepackage{dsfont}
\usepackage{amsmath}
\usepackage{amsthm}
\usepackage{amssymb}
\usepackage{geometry}
\usepackage{setspace}
\makeatletter
\theoremstyle{plain}
\newtheorem{thm}{\protect\theoremname}[section]
\theoremstyle{plain}
\newtheorem{cor}[thm]{\protect\corollaryname}
\theoremstyle{plain}
\newtheorem{prop}[thm]{\protect\propositionname}
\theoremstyle{remark}
\newtheorem{rem}[thm]{\protect\remarkname}
\theoremstyle{plain}
\newtheorem{lem}[thm]{\protect\lemmaname}
\theoremstyle{definition}
\newtheorem{defn}[thm]{\protect\definitionname}
\theoremstyle{definition}
\newtheorem{example}[thm]{\protect\examplename}

\@ifundefined{date}{}{\date{}}
\usepackage{graphicx}
\usepackage{fancyhdr}\usepackage{setspace}\usepackage{mathrsfs}\usepackage{enumerate}\usepackage{tikz}\usepackage{fixltx2e}\usepackage{amsthm}
\usepackage{bbm}
\AtBeginDocument{
  \renewbibmacro{in:}{}
}

\usetikzlibrary{decorations.markings}\usetikzlibrary{shapes.misc}
\usepackage[colorlinks=true,linkcolor=blue,citecolor=blue,urlcolor=blue]{hyperref}
\usepackage{cleveref}\usepackage{dsfont}\usetikzlibrary{calc}

\tikzset{->-/.style={decoration={   markings,   mark=at position .5 with {\arrow{>}}},postaction={decorate}}} 

\newcommand{\myoctagon}[1]{

      \coordinate (#1v1) at (0.3, 3);     \coordinate (#1u1) at (1.7,3);

    \coordinate (#1v2) at (2.2,2.65); \coordinate (#1u2) at (2.9,2.1);

    \coordinate (#1v3) at (3.2, 1.5); \coordinate (#1u3) at (3.2, 0.3);

    \coordinate (#1v4) at (2.9,-0.3); \coordinate(#1u4) at (2.2, -0.85);

    \coordinate (#1v5) at (1.7, -1.2); \coordinate (#1u5) at (0.3, -1.2);

    \coordinate (#1v6) at (-0.2,-0.85); \coordinate (#1u6) at (-0.8, -0.3);

    \coordinate (#1v7) at (-1.2,0.3); \coordinate (#1u7) at (-1.2, 1.5);

    \coordinate (#1v8) at (-0.9, 2.1); \coordinate (#1u8) at (-0.2, 2.65);

     \filldraw(#1v1) circle (2pt); \filldraw(#1u1) circle (2pt);

    \filldraw (#1v2) circle (2pt); \filldraw(#1u2) circle(2pt);

    \filldraw (#1v3) circle (2pt); \filldraw(#1u3) circle(2pt);

    \filldraw (#1v4) circle (2pt); \filldraw(#1u4) circle(2pt);

    \filldraw (#1v5) circle (2pt); \filldraw(#1u5) circle(2pt);

    \filldraw (#1v6) circle (2pt); \filldraw(#1u6) circle(2pt);

    \filldraw (#1v7) circle (2pt); \filldraw(#1u7)circle(2pt);

    \filldraw (#1v8) circle (2pt); \filldraw(#1u8) circle(2pt);

     \draw[->-] (#1v1)-- node[midway, above]{$a$}(#1u1);
\draw[->-] (#1v2)-- node[midway, above right]{$b$}(#1u2);
\draw[->-](#1u3)-- node[midway,right]{$a$}(#1v3);
\draw[->-](#1u4)-- node[midway, below right]{$b$}(#1v4);
\draw[->-](#1v5)--node[midway, below]{$c$}(#1u5);
\draw[->-](#1v6)-- node[midway, below left]{$d$}(#1u6);
\draw[->-](#1u7)-- node[midway, left]{$c$}(#1v7);
\draw[->-](#1u8)-- node[midway, above left]{$d$}(#1v8);

}

\newcommand{\gammaloop}{
\coordinate (1) at (0,3); \coordinate (2) at (2,3); \coordinate (3) at (3.4, 1.6); \coordinate(4) at (2, 0.2); \coordinate (5) at (0,0.2); \coordinate (6) at (-1.4, 1.6); 

\filldraw(1) circle(2pt); \filldraw(2) circle(2pt);\filldraw(3) circle(2pt);\filldraw(4) circle(2pt);\filldraw(5) circle(2pt);\filldraw(6) circle(2pt);
\draw[->-](1)--node[midway, above]{$a$}(2); \draw[->-](2)--node[midway, above right]{$a$}(3); \draw[->-](3)--node[midway, below right]{$c$}(4); \draw[->-](4)--node[midway, below]{$b$}(5); \draw[->-](6)--node[midway, below left]{$c$}(5); \draw[->-](1)--node[midway, above left]{$b$}(6);  
}

\makeatother

\usepackage{babel}
\usepackage[style=alphabetic,maxbibnames=99]{biblatex}
\providecommand{\corollaryname}{Corollary}
\providecommand{\definitionname}{Definition}
\providecommand{\examplename}{Example}
\providecommand{\lemmaname}{Lemma}
\providecommand{\propositionname}{Proposition}
\providecommand{\remarkname}{Remark}
\providecommand{\theoremname}{Theorem}

\begin{document}
\global\long\def\C{\mathbb{C}}%
\global\long\def\cnk{\left(\C^{n}\right)^{\otimes k}}%
\global\long\def\P{P_{k}(n)}%
\global\long\def\A{A_{k}(n)}%
\global\long\def\element{\left(e_{1}-e_{2}\right)\otimes\dots\otimes\left(e_{2k-1}-e_{2k}\right)}%
\global\long\def\Sn{S_{n}}%
\global\long\def\endsn{\mathrm{End}{}_{\Sn}}%
\global\long\def\xilambda{\sum_{\sigma\in S_{k}}\frac{d_{\lambda}}{k!}\chi^{\lambda}(\sigma)\left[v_{\sigma(1)}\otimes\dots\otimes v_{\sigma(k)}\right]}%
\global\long\def\p{\mathscr{P}}%
\global\long\def\dkn{\left\langle e_{i_{1}}\otimes\dots\otimes e_{i_{k}}:|\{i_{1},\dots,i_{k}\}|=k\right\rangle }%
\global\long\def\normxilambda{\xi_{\lambda}^{\mathrm{norm}}}%
\global\long\def\skdelta{S_{k}^{\Delta}}%
\global\long\def\vlambdacheck{\check{V}^{\lambda}}%
\global\long\def\Q{\mathcal{Q}}%
\global\long\def\endoofvlambda{\normxilambda\otimes\check{\xi}_{\lambda}^{\mathrm{norm}}}%
\global\long\def\cntwok{\left(\C^{n}\right)^{\otimes2k}}%
\global\long\def\vsigmas{v_{\sigma(1)}\otimes\dots\otimes v_{\sigma(k)}\otimes v_{\sigma'(1)}\otimes\dots\otimes v_{\sigma'(k)}}%
\global\long\def\D{\mathcal{D}}%
\global\long\def\tablambda{\mathrm{Tab}_{\mathrm{\normxilambda}}\left(\lambda^{+}(2k)\right)}%
\global\long\def\part{\mathrm{Part}\left([2k]\right)}%
\global\long\def\ei{e_{i_{i}}\otimes\dots\otimes e_{i_{k}}}%
\global\long\def\csk{\C\left[S_{k}\right]}%
\global\long\def\mixedtensor{\cnk\otimes\left(\left(\C^{n}\right)^{\vee}\right)^{\otimes l}}%
\global\long\def\shortmixedtensor{C_{k,l}^{n}}%
\global\long\def\f{F_{r}}%
\global\long\def\b{\mathcal{B}_{r}^{\times}}%
\global\long\def\u{\mathcal{U}_{\lambda,n}}%
\global\long\def\refinedpart{\overset{\star}{\mathrm{Part}}\left(\left[|w|_{f}k\right]\right)}%
\global\long\def\skpart{\mathrm{Part}_{\leq S_{k}}\left([2k]\right)}%
\global\long\def\gsigma{G\left(\sigma_{x},\tau_{x},\pi_{i}\right)}%
\global\long\def\hatgsigma{\hat{G}\left(\sigma_{x},\tau_{x},\pi_{i}\right)}%
\global\long\def\gamsig{\Gamma\left(\sigma_{x},\tau_{x},\pi_{i}\right)}%
\global\long\def\tgamsig{\tilde{\Gamma}\left(\sigma_{x},\tau_{x},\pi_{i}\right)}%
\global\long\def\Astack{\mathcal{A}_{\hat{\Lambda}'}}%
\global\long\def\B{\mathcal{B}_{\hat{\Lambda}'}}%
\global\long\def\integral{\mathcal{I}_{n}(w,\lambda,g)}%
\global\long\def\samgamsig{\Gamma\left(\sigma_{x},\sigma_{x},\pi_{i}\right)}%
\global\long\def\link{\mathrm{Link}(v)}%
\global\long\def\alphabet{\{a_{1},b_{1},\dots,a_{g},b_{g}\}}%
\global\long\def\match{\mathrm{Match}\left(w,k\right)}%

\title{Word maps and surface relations in symmetric groups}
\author{Ewan Cassidy}
\maketitle
\begin{abstract}
We study the expected number of fixed points of a random permutation
obtained via a word map, with surface group constraints imposed. For
stable irreducible characters $\chi$ of the symmetric group $S_{n}$
and with $R_{g}=[a_{1},b_{1}]\dots[a_{g},b_{g}]$ and $w\in F_{2g}$,
we compute $\mathbb{E}_{S_{n}^{2g}}\left[\chi\left(R_{g}(h)\right)\#\mathrm{fix}\left(w(h)\right)\right].$
We show that, if $w$ is a shortest representative for the conjugacy
class of $\gamma\in\Gamma_{g}=\left\langle a_{1},b_{1},\dots,a_{g},b_{g}:R_{g}\right\rangle $,
then this expectation is $O\left(1/\dim\chi\right).$ As an application,
we recover a boundedness statement of Magee--Puder on the large $n$
limit of the expected number of fixed points of $\phi_{n}(\gamma),$
where $\gamma\in\Gamma_{g}$ is fixed and $\phi_{n}\in\hom\left(\Gamma_{g},S_{n}\right)$
is chosen uniformly at random. 
\end{abstract}

\section{\label{sec:Introduction}Introduction}

Given a word $w$ in the free group $F_{r}=\left\langle a_{1},\dots,a_{r}\right\rangle $,
one can define a word map 
\[
w:G^{r}\to G
\]
for any compact group $G$ via substitutions of the $a_{i}$ by group
elements $g_{i}.$ This paper is focused on the case $G=S_{n}.$ Now,
fix an integer $g\ge2$ . We are interested in multi--trace statistics
for a pair of words, the first word being some fixed $w$ in the free
group
\[
F_{2g}=\left\langle a_{1},b_{1},\dots,a_{g},b_{g}\right\rangle 
\]
and the second word being 
\[
R_{g}\overset{\mathrm{def}}{=}\left[a_{1},b_{1}\right]\dots\left[a_{g},b_{g}\right].
\]
More specifically, writing $h=(h_{a_{1}},h_{b_{1}},\dots,h_{a_{g}},h_{b_{g}})\in S_{n}^{2g}$,
we are interested in integrals of the following form 
\begin{equation}
\int_{h}\chi\left(R_{g}\left(h\right)\right)\mathrm{Tr}\left(w(h)\right)dh,\label{eq: integral}
\end{equation}
where $\chi$ is some irreducible character of the symmetric group
and $\mathrm{Tr}(w(h))$ is the trace of $w(h)$ in the defining representation
of $S_{n}$ (i.e. the trace as a $0$--$1$ permutation matrix).
One can equally interpret $\mathrm{Tr}(w(h))$ as the number of fixed
points of the permutation $w(h).$ These integrals are done with respect
to the uniform probability measure on $S_{n}^{2g}.$ Motivated by
their connection to fixed point statistics for the image $\phi_{n}(\gamma)$
for $\gamma$ an element of 
\[
\Gamma_{g}\overset{\mathrm{def}}{=}\left\langle a_{1},b_{1},\dots,a_{g},b_{g}:\left[a_{1},b_{1}\right]\dots\left[a_{g},b_{g}\right]\right\rangle ,
\]
the fundamental group of a closed orientable genus $g$ surface $\Sigma_{g}$
and $\phi_{n}\in\hom\left(\Gamma_{g},S_{n}\right)$ chosen uniformly
at random, we are particularly interested in giving asymptotic bounds
on (\ref{eq: integral}) when $\chi$ is a \emph{stable irreducible
character}. These are irreducible representations of $S_{n}$ indexed
by Young diagrams with a fixed number of boxes outside the first row.
That is, for some fixed some positive integer $k$ and Young diagram
$\lambda\vdash k$ , then $\chi$ is indexed by a Young diagram $\lambda^{+}(n)$
with $n$ boxes (for $n\ge2k)$ defined by 
\[
\lambda^{+}(n)\overset{\mathrm{def}}{=}\left(n-k,\lambda\right)\vdash n,
\]
i.e. by adding an additional row of $n-k$ boxes to the top of $\lambda$.
These representations are of interest because they characterize the
lowest dimension representations of $S_{n}$ of dimension $\sim n^{k}$,
see \S\ref{subsec:Representation-theory-of}. Although these integrals
can be defined for arbitrary words $w$, to obtain good asymptotic
bounds using the methods of this paper we depend on the (lack of)
interaction between $w$ and the surface relation $R_{g}.$ In particular,
we require that $w$ cannot be shortened via cyclic reduction or by
using the relation $R_{g}=e.$ Our first result is the following theorem. 
\begin{thm}
\label{thm: bound on integral main}Fix $\gamma\in\Gamma_{g}$ with
$\gamma\neq e.$ For any word $w$ that is a shortest representative
of the conjugacy class of $\gamma$ in $\Gamma_{g},$ for any fixed
$k\in\mathbb{Z}_{\ge0}$ and for any $\lambda\vdash k$, we have 
\[
\mathcal{I}_{n}(w,\lambda,g)\overset{\mathrm{def}}{=}\int_{S_{n}^{2g}}\chi^{\lambda^{+}(n)}\left(R_{g}(h)\right)\mathrm{Tr}\left(w(h)\right)dh=O_{k,\ell(w),g}\left(\frac{1}{d_{\lambda^{+}(n)}}\right)=O_{k,\ell(w),g}\left(\frac{1}{n^{k}}\right),
\]
where $d_{\lambda^{+}(n)}=\dim\chi^{\lambda^{+}(n)}.$
\end{thm}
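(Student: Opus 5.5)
The strategy is to rewrite $\integral$ as a finite sum of Weingarten-type terms, one for each combinatorial gluing datum, and then to bound the dominant terms by a topological argument that uses the assumption that $w$ is a shortest representative.

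\textbf{Step 1: representation-theoretic setup.} We realise $\chi^{\lambda^{+}(n)}$ inside $\cnk$. Schur--Weyl duality for the partition algebra $\P$ gives
\[
\chi^{\lambda^{+}(n)}(\sigma)=\mathrm{Tr}_{\cnk}\bigl(\sigma\,\Q_{\lambda}\bigr),
\]
where $\Q_{\lambda}$ is the projection onto the isotypic component. Concretely, we take $\Q_{\lambda}$ to be built from the vectors $\normxilambda$, that is, from $\D=\dkn$, with the Young symmetrizer $\xilambda$ applied. We then write $\chi(R_g(h))\,\mathrm{Tr}(w(h))$ as a trace on $\left(\C^{n}\right)^{\otimes(k+1)}$ and expand $\mathrm{Tr}(w(h))=\sum_i \langle e_i, w(h)e_i\rangle$ letter by letter.

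\textbf{Step 2: integrate each $h_{a_j},h_{b_j}$ separately.} For each generator $x$, the letter $x^{\pm1}$ appears $2k$ times from $R_g$ (once as $x$ and once as $x^{-1}$ in each of the $k$ tensor factors) and $|w|_x$ times from $w$. The integral $\int_{S_n}\rho(h_x)^{\otimes m}\,dh_x$ is the orthogonal projection onto the $S_n$-invariants of $\left(\C^{n}\right)^{\otimes m}$, which are indexed by set partitions. Using the Weingarten calculus for $S_n$, this produces a sum over pairs of partitions $(\sigma_x,\tau_x)$ of a Weingarten function times a power of $n$.

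Assembling these over all generators, we obtain a finite sum, with the number of terms bounded in terms of $k$, $\ell(w)$ and $g$, indexed by data $(\sigma_x,\tau_x,\pi_i)$. To each datum we attach a 2-complex $\gamsig$: a disc with boundary $w$ and $k$ copies of the surface polygon with boundary $R_g$, glued along the partition data. Each term then has size
\[
n^{\chi(\gamsig)-(\text{correction})}\cdot\frac{1}{d_{\lambda^{+}(n)}}\cdot O(1),
\]
where the $1/d_{\lambda^{+}(n)}$ factor comes from the normalisation of $\Q_{\lambda}$ (via $\xi_\lambda$ being a minimal idempotent). This is in the style of Magee--Puder, with Euler characteristic counting vertices minus edges plus faces.

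\textbf{Step 3 (main obstacle): topological bound.} It remains to show that every surviving term has nonpositive net exponent, i.e.\ is $O(1)$ before the $1/d_{\lambda^{+}(n)}$ factor. This requires the Euler characteristic of each glued complex to be at most a certain threshold, and that threshold could only be beaten if the $w$-disc were glued so that $w$ becomes null-homotopic or shortenable.

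Here the vectors in $\D$, which have distinct indices, together with the antisymmetrisation in $\xi_{\lambda}$, are used to kill the terms where the $R_g$-copies collapse onto one another or where a component has a high-$\chi$ sphere or disc. The remaining configurations are controlled as follows:
\begin{itemize}
\item We use that $\gamma\neq e$, so the $w$-boundary cannot bound a disc in a surface built from copies of $\Sigma_g$.
\item We use that $w$ is a shortest representative of its conjugacy class, via Dehn's algorithm for $\Gamma_g$: $w$ contains no more than half of a cyclic relator and is cyclically reduced. This forbids the foldings that would otherwise raise $\chi$.
\end{itemize}

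I expect this step to be the hard part. I would first attempt it by a vertex-link (Euler characteristic) counting argument: each vertex link gets weight at most that of a circle unless a cancellation in $w$ or a long relator piece occurs, and such occurrences are ruled out by minimality. Finally, $d_{\lambda^{+}(n)}\asymp n^k$ gives the second equality in the statement.
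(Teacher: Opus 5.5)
There is a genuine gap, and it starts with the representation-theoretic input. The space you build, $\xi_\lambda$ applied to the distinct-index tensors $\mathcal{D}$, is not a sum of copies of $V^{\lambda^{+}(n)}$. Each copy is $\mathrm{Ind}_{S_k\times S_{n-k}}^{S_n}(V^{\lambda}\otimes\mathrm{triv})$, which by Pieri also contains $V^{\mu^{+}(n)}$ for every $\mu\subsetneq\lambda$ with $\lambda/\mu$ a horizontal strip. For $k=1$ it is all of $\mathbb{C}^{n}$, so your trace is $\#\mathrm{fix}(R_g(h))$, and $\int\#\mathrm{fix}(R_g(h))\,\mathrm{Tr}(w(h))\,dh=\mathcal{I}_n(w,\emptyset,g)+\mathcal{I}_n(w,(1),g)$ is of order $1$. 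Hence distinctness of indices together with the $S_k$-symmetry of $\xi_\lambda$ cannot kill the configurations in which the $R$-cycles are left unglued; these give a graph with $\chi=0$ and an order-one contribution.

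The missing idea is that the copies of $V^{\lambda^{+}(n)}$ lie in $A_k(n)\subseteq\bigcap_j\ker T_j$. Summing a free index against $\bar\beta_{p,J}$ computes a coefficient of $T_j(v_p)=0$. This is Lemma~\ref{lem:NO-SINGLETONS}, and it forces every $R$-edge endpoint to be glued by some $\sigma_x$ or $\tau_x$. If you meant the genuine isotypic projection, its image is indeed in $A_k(n)$, but then this kernel property is what you must use, and its trace is $d_\lambda\chi^{\lambda^{+}(n)}$.

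Your bookkeeping is also off: no factor $1/d_{\lambda^{+}(n)}$ comes from normalising the projection. The prefactor is $1/d_\lambda$ with $\lambda\vdash k$, which is a constant. The coefficients $c(n,k,\lambda,\pi)$ carry $d_{\lambda^{+}(n)}$ in the numerator and are $O(n^{-\mathrm{del}(\pi)})$. So the whole factor $n^{-k}$ must come from proving $\chi(\Gamma)\le-k$ for the glued graph (vertices minus edges), not from a ``net exponent $\le 0$'' statement.

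That Euler characteristic bound is the content of the theorem, and your Step~3 does not supply it. Minimality does not rule out the dangerous configurations: subpaths of $w$ do get folded onto relator faces. With all angles $\pi/2g$, each internal boundary vertex can carry positive curvature, up to $\pi\bigl(1-\frac{\mathfrak{he}(v)+1}{2g}\bigr)$. The paper shows the total vertex curvature is at most $2(2g-2)\pi k$; against the face curvature $2\pi m-2(2g-1)\pi k$, Gauss--Bonnet then gives $\chi(X)\le m-k$.

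This step requires the Birman--Series/Magee--Puder inequality $\mathfrak{e}(P)\le(2g-1)\mathfrak{he}(P)+2g$ (and $\mathfrak{e}(P)\le(2g-1)\mathfrak{he}(P)$ for a closed piece), combined with the count $\sum_P(\mathfrak{e}(P)+\mathfrak{he}(P))\le 4gk$ of relator edges. The half-relator property you cite only bounds the runs between hanging half-edges by $2g$, i.e.\ slope $2g$ instead of $2g-1$. With that weaker input, the same count yields only $\chi(X)\le m-\frac{2g}{2g+1}k$.

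Moreover, because the $\sigma_x$ are set partitions, several $w$-edges can fold into one edge, so the complex is not a surface. External boundary vertices with interval links (two WR-edges meeting an RRW-edge) then occur. The paper needs the unzipping procedure and the estimate $\sum_{v\in V_w^{\mathrm{ext}}}\kappa(v)\le-\pi M/2g$ to pay for the extra $\pi/2g$ per piece.

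Finally, do not glue a disc onto the $w$-cycle. It must remain boundary, since that is where the positive curvature lives. The hypothesis $\gamma\neq e$ enters only through the piece inequality, not through a ``$w$ cannot bound a disc'' argument.
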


Without further conditions on the choice of representing word $w$
for the conjugacy class of $\gamma,$ it is not possible to obtain
a better bound than that of Theorem \ref{thm: bound on integral main}
using the approach in this paper, as can be seen in Example \ref{exa: -k example}.
Nevertheless, for a large family of $\gamma\in\Gamma_{g},$ we can
give the following improvement for $\lambda\vdash k$ with $k\ge1$,
with the proof given in \S\ref{subsec:Proof-of-Lemma DEGENERATE}.
We note that, for $k=0,$ $\lambda^{+}(n)$ indexes the trivial representation
of $S_{n},$ and thus $\mathcal{I}(w,\mathrm{triv},g)$ is the expected
number of fixed points of a $w$--random permutation, where much
stronger bounds are already known from \cite{Nica,LinialPuder,PuderParzanchewski2012}. 
\begin{thm}
\label{thm: -k-1 bound}Fix $\gamma\in\Gamma_{g}$ with $\gamma\neq e.$
Suppose that, up to taking cyclic permutations, the conjugacy class
of $\gamma$ in $\Gamma_{g}$ has a unique shortest representative
word $w$. Then, for any fixed $k\in\mathbb{Z}_{\ge1}$ and for any
$\lambda\vdash k$, we have 
\[
\integral=O_{k,\ell(w),g}\left(\frac{1}{n^{k+1}}\right).
\]
\end{thm}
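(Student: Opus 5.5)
We will run the same combinatorial expansion that gives Theorem \ref{thm: bound on integral main}, and then show that under the uniqueness hypothesis the terms attaining the borderline order $n^{-k}$ cancel or do not occur.

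\textbf{Step 1: the expansion.} Write $\chi^{\lambda^{+}(n)}$ via the projection $\mathcal{Q}_\lambda$ onto the $\lambda^{+}(n)$-isotypic part of $\cnk$, using the stable element $\normxilambda$. Integrate over $h\in S_n^{2g}$ letter by letter with Weingarten-type calculus for $S_n$, i.e.\ sums over partitions of $[2k]$ and over tuples $(\sigma_x,\tau_x,\pi_i)$. This gives a finite sum over combinatorial data of terms of size $n^{\chi(\Gamma)-\text{(defect)}}$, where $\Gamma=\gamsig$ is a graph built by gluing $k$ copies of the octagon-type $R_g$ polygon to the cycle reading $w$. Theorem \ref{thm: bound on integral main} amounts to showing every such term is $O(n^{-k})$. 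I take this decomposition, and the bound proved from it, as established earlier in the paper.

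\textbf{Step 2: isolate the extremal terms.} I would characterize exactly which data achieve the order $n^{-k}$. My expectation is that these are configurations in which the $w$-loop is glued to the surface polygons along a path, which yields a different representative of the conjugacy class of $\gamma$ of the same length as $w$. In other words, a term is extremal precisely when some portion of $w$ can be rewritten, using $R_g=e$, into a word of equal length, as in Example \ref{exa: -k example}. The argument is an Euler characteristic count. Each additional edge identification between the $w$-cycle and an $R_g$-cycle costs a factor of $n$ unless it is compensated by freeing a vertex. Equality forces the union of matched $R_g$-segments to contain at least half of a relator cycle, with exactly half giving length preservation. More than half is impossible, because then $w$ would not be shortest.

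\textbf{Step 3: use the hypothesis.} If the conjugacy class has a unique shortest representative up to cyclic permutation, no equal-length rewriting other than a cyclic shift exists. Cyclic shifts do not correspond to any genuine gluing with an $R_g$-polygon. Hence no extremal configuration occurs, every term is $O(n^{-k-1})$, and since the number of terms is bounded in terms of $k,\ell(w),g$, the theorem follows.

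We need $k\ge1$ only so that there is at least one relator polygon: for $k=0$ the bound is of a different nature.

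\textbf{Main obstacle.} The hard step is Step 2: proving that equality in the Euler characteristic bound forces a half-relator substitution. I would approach it with a Dehn/Birman--Series style van Kampen argument. View the glued complex as a surface-like 2-complex, reduce to disc diagrams between $w$ and a word $w'$, and invoke the known fact that in $\Gamma_g$ for $g\ge2$ the geodesic words in a conjugacy class are related by half-relator swaps and cyclic shifts. A secondary subtlety is that when different extremal terms exist one might hope they cancel. The uniqueness hypothesis avoids needing cancellation, so I would aim only for absence.
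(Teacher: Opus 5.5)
Your outline has the same shape as the paper's argument: show that under the uniqueness hypothesis no gluing $(\sigma_x,\pi_i)\in\mathrm{Match}_{S_k}(w,k)$ attains $\chi(\samgamsig)=-k$. Then integrality gives $\chi\le -k-1$, and the reduction to $n^{\max\chi}$ gives $O(n^{-k-1})$. The gap is Step 2, which is the entire content of the theorem and which you only assert. The route you sketch for it does not fit the objects involved. The complex $X(\sigma_x,\pi_i)$ is not a van Kampen diagram between $w$ and another word: its faces are bounded by $R_g^{k_i}$, one edge may absorb many $w$--edges, and there are RRW-- and WW--edges. So there is no evident way to read a second geodesic $w'$ off an extremal gluing. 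The classification of geodesics in a conjugacy class is also beside the point. Uniqueness already excludes other geodesics; what must be shown is that an extremal gluing \emph{produces} one. Your heuristic that each identification ``costs a factor of $n$ unless compensated'' does not get you there.

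The missing idea is that no new argument is needed: track the equality case in the proof of Proposition \ref{prop:bounding euler char of Gamma -k}.
By Proposition \ref{prop:combinatorial gauss-bonnet} with all angles $\frac{\pi}{2g}$, $\chi(\samgamsig)=-k$ forces $\sum_v\kappa(v)=2(2g-2)\pi k$. So every inequality in the proof of Lemma \ref{lem:curvature of vertices} must be tight.
\begin{itemize}
\item If there are no WR--pieces, then $\sum_v\kappa(v)\le 0$, which is strict since $g\ge2$ and $k\ge1$.
\item Otherwise, tightness forces equality in Lemma \ref{lem: bound for e(P), he(P)} for every WR--piece $P$. It then holds for $\tilde P\subset\mathcal{L}_w$ too, since $\mathfrak{e}(\tilde P)=\mathfrak{e}(P)$ and $\mathfrak{he}(\tilde P)\le\mathfrak{he}(P)$.
\item If $\tilde P$ is not the whole loop, its $h=\mathfrak{he}(\tilde P)$ hanging half--edges cut the chosen side into $h+1$ runs of $e_0,\dots,e_h$ consecutive edges, each lying on a single $4g$--gon. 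If all $e_j\le 2g-1$, then $\mathfrak{e}(\tilde P)\le(2g-1)h+2g-1$. So equality forces some $e_j=2g$: $w$ cyclically contains $2g$ consecutive letters of a cyclic conjugate of $R_g^{\pm1}$. Swapping them for the complementary $2g$ letters contradicts uniqueness.
\end{itemize}
That is the paper's proof.

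One caution, which affects your claim that equality forces a half--relator; the paper's one-line justification also passes over it. If the whole $w$--loop is a single piece, equality $\mathfrak{e}(\tilde P)=(2g-1)\mathfrak{he}(\tilde P)$ is compatible with every run having length $2g-1$ (a closed chain), with no half--relator present. This case needs a separate argument, for example that pushing $\mathcal{L}_w$ across the closed chain yields another representative of the same length.
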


In principle, the combinatorial integration method and the subsequent
reduction to a `core graph type' expansion described in \S\ref{sec:Combinatorial-integration}
can be used to describe the integrals 
\begin{equation}
\int_{S_{n}^{2g}}\chi^{\lambda^{+}(n)}\left(R(h)\right)\mathrm{Tr}\left(w(h)\right)dh\label{eq: integral for general relators}
\end{equation}
for any word $w$ and arbitrary relator $R$. As is described in \S\ref{subsec:Bounding},
to obtain a useful asymptotic bound as in Theorem \ref{thm: bound on integral main},
we would require a combinatorial characterisation of when $w$ is
a shortest representing word in its conjugacy class in $\Gamma=\left\langle a_{1},\dots,a_{r}:R\right\rangle .$
In the case of $\Gamma=\Gamma_{g}$ as above, this ingredient is readily
available and goes back to the work of Dehn \cite{Dehn} and Birman--Series
\cite{BirmanSeries}. It is plausible that obtaining a similar characterisation
for more general relators $R$ and using the methods of this paper
could yield similarly strong estimates on (\ref{eq: integral for general relators}).

The following section details an important application of Theorem
\ref{thm: bound on integral main} towards computing asymptotic statistics
for random covers of closed surfaces. In particular, it makes clear
why the bound obtained in Theorem \ref{thm: bound on integral main}
on $\integral$ must `beat the dimension' of the corresponding irreducible
representation. 

\subsection{\label{subsec:Applications:-Expected-number}Application: Expected
number of fixed points}

In \cite{MageePuder2023}, Magee and Puder evaluate the following
expectations 
\begin{equation}
\mathop{\mathbb{E}_{\phi_{n}}\left[\#\mathrm{fix}\left(\phi_{n}(\gamma)\right)\right],}\label{eq: expected trce surface}
\end{equation}
where $g\ge2$ is fixed, $\Gamma_{g}$ is the fundamental group of
a closed orientable genus $g$ surface $\Sigma_{g}$, $\gamma\in\Gamma_{g}$
(with $\gamma\neq e$), the expectation is taken over all $\phi_{n}\in\hom\left(\Gamma_{g},S_{n}\right)$
chosen uniformly at random and $\#\mathrm{fix}$ is the number of
fixed points. We note that the representation space 
\[
\mathbb{X}_{g,n}\overset{\mathrm{def}}{=}\hom\left(\Gamma_{g},S_{n}\right)
\]
is finite (see \cite[Proposition 3.2]{LiebeckShalev} for example)
and so taking the uniform probability measure is natural. This representation
space can be equally viewed as the space of all $n$--sheeted covers
of the surface $\Sigma_{g}$, as is discussed in \cite{MageePuder2023}.
The expectation (\ref{eq: expected trce surface}) is then the expected
number of times a closed loop $\gamma$ in $\Sigma_{g}$ lifts to
a closed loop in a random $n$--sheeted cover. Magee and Puder proved
that, for large $n$, this expectation can be approximated to any
order by a rational function in $n,$ i.e. that it has an asymptotic
expansion and determined the large $n$ limit. 
\begin{thm}[{\cite[Theorems 1.2 and 1.3]{MageePuder2023}}]
\label{thm:Magee-Puder main theorem} Fix $\gamma\in\Gamma_{g}$.
There is an infinite sequence of numbers 
\[
a_{-1},a_{0},a_{1},a_{2},\dots
\]
depending on $\gamma,$ such that, for any $m\in\mathbb{Z}_{>0}$,
as $n\to\infty,$ 
\[
\mathbb{E}_{\phi_{n}}\left[\#\mathrm{fix}\left(\phi_{n}(\gamma)\right)\right]=a_{-1}n+a_{0}+a_{1}n^{-1}+a_{2}n^{-2}+\dots+a_{m-1}n^{-(m-1)}+O\left(n^{-m}\right).
\]
 Moreover, $a_{-1}\neq0$ if and only if $\gamma=e,$ and if $b\in\mathbb{Z}_{>0}$
is maximal such that $\gamma=\delta^{b}$ for some $\delta\in\Gamma_{g},$
then 
\[
a_{0}=d(b),
\]
where $d(b)$ is the number of divisors of $b$. 
\end{thm}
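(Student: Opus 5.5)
The plan is to reduce everything to the integrals $\integral$ through the Frobenius--Mednykh character formula, and then read off the expansion character by character. Write $w$ for a shortest representative of the conjugacy class of $\gamma$ (for $\gamma=e$ take $w$ empty). Counting $\phi_n$ by its images $h=\phi_n(a_i),\phi_n(b_i)$, the indicator of $R_g(h)=e$ is $\frac{1}{n!}\sum_{\chi}d_\chi\chi(R_g(h))$. Together with $\int\chi(R_g(h))\,dh=d_\chi^{1-2g}$, this gives
\[
\mathbb{E}_{\phi_n}\left[\#\mathrm{fix}\left(\phi_n(\gamma)\right)\right]=\frac{\sum_{\chi}d_\chi\int_{S_n^{2g}}\chi(R_g(h))\mathrm{Tr}(w(h))\,dh}{\sum_{\chi}d_\chi^{2-2g}}.
\]
The denominator is the Witten zeta function $\zeta_{S_n}(2g-2)=2+O(n^{-(2g-2)})$, with the trivial and sign characters each contributing $1$.

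In the numerator, $\mathrm{sgn}(R_g(h))=1$, so $\chi$ and $\chi\otimes\mathrm{sgn}$ give identical terms. The characters therefore come in pairs, each indexed by $\lambda^+(n)$ with $\lambda\vdash k$ or by its transpose. I would split the sum at a cutoff $k\le K$, with $K=K(m,g,\ell(w))$, and treat the two ranges separately.

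\emph{Small $k$.} I would show from the combinatorial integration of \S\ref{sec:Combinatorial-integration} that each $\integral$ is, for large $n$, a rational function of $n$; this holds because the core-graph expansion is a finite sum of terms of the form (falling factorials)/(falling factorials). Since $d_{\lambda^+(n)}$ is a polynomial in $n$, each product $d_{\lambda^+(n)}\integral$ then has a Laurent expansion in $n^{-1}$. By Theorem \ref{thm: bound on integral main}, for $\gamma\neq e$ this product is $O(1)$, so its expansion starts at order $n^0$.

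\emph{Large $k$.} I would bound the tail crudely by $|\mathrm{Tr}(w(h))|\le n$ and Cauchy--Schwarz:
\[
\left|d_\chi\int\chi(R_g)\mathrm{Tr}(w)\right|\le n\,d_\chi\left(\int|\chi(R_g)|^2\right)^{1/2}.
\]
Using $\chi(R_g)$ as a convolution power, I would aim to show this is at most $n\,d_\chi^{1-(g-1)}$ up to constants. Liebeck--Shalev bounds $d_\chi\ge c\,n^{k}$ for $k\le n/2$ then make $\sum_{k>K}$ equal to $O(n^{-m})$ once $g\ge2$ and $K$ is large. I expect this tail estimate, done uniformly without losing to the number of characters, to be the first technical obstacle. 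If Cauchy--Schwarz is too lossy, I would instead use the standard character-ratio bounds $|\chi(\sigma)|/d_\chi\le d_\chi^{-c}$ for permutations with many non-fixed points, applied to the random element $R_g(h)$. Combining the two ranges gives the asymptotic expansion.

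For the coefficients: if $\gamma=e$, then $\mathrm{Tr}=n$ identically and the expectation is exactly $n$, so $a_{-1}=1$. If $\gamma\ne e$, every term above is $O(1)$, so $a_{-1}=0$. For $a_0$, the trivial and sign characters give $2\,\mathbb{E}_{S_n^{2g}}[\#\mathrm{fix}(w(h))]$, which by the free-group results \cite{Nica,PuderParzanchewski2012} tends to $2d(b')$, where $b'$ is the maximal root of $w$ in $F_{2g}$. Dividing by the denominator gives a contribution $d(b')$. I would then use the Dehn/Birman--Series description of shortest words to show $b'=b$: a shortest cyclic representative of $\delta^b$ is a cyclic rotation of $u^b$ for $u$ a shortest representative of $\delta$, and conversely.

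The main obstacle is that the nontrivial stable characters must contribute nothing at order $n^0$, i.e.\ $d_{\lambda^+(n)}\integral=o(1)$ for $k\ge1$. Theorem \ref{thm: -k-1 bound} gives exactly this when the shortest representative is unique up to cyclic permutation, but Theorem \ref{thm: bound on integral main} alone only gives $O(1)$. For general $\gamma$, I would try to identify the $n^{-k}$-coefficient of $\integral$ in the core-graph expansion. I expect it to be supported on the degenerate configurations coming from the different shortest representatives (those responsible for Example \ref{exa: -k example}), and I would then show that these contributions cancel in signed pairs that swap two shortest representatives differing by a relator move. If that cancellation fails, I would fall back on deducing $a_0=d(b)$ for general $\gamma$ by citing \cite{MageePuder2023} directly.
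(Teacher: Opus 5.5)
The paper does not prove this statement; it is quoted from \cite{MageePuder2023}. The paper's Corollary recovers only the expansion with $a_{-1}=0$ for $\gamma\neq e$, and $a_0=d(b)$ under the hypothesis of Theorem \ref{thm: -k-1 bound}. It imports both the truncation and the rationality of the low-$k$ terms from \cite{MageePuderVanHandel}. Your low-$k$ analysis matches that Corollary, but two steps of your proposal fail.

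\textbf{The tail.} Cauchy--Schwarz cannot give $n\,d_\chi^{2-g}$. By the Frobenius formula for the law of $R_g(h)$,
\[
\int_{S_n^{2g}}|\chi(R_g(h))|^2\,dh=\sum_{\psi}\langle\chi\bar\chi,\psi\rangle\,d_\psi^{1-2g}\ge 1,
\]
since the trivial constituent of $\chi\bar\chi$ alone contributes $1$. So your bound is at least $n\,d_\chi$. Character-ratio bounds do not rescue this. Once you have used $|\mathrm{Tr}(w(h))|\le n$, any estimate is at least $n\,d_\chi\int|\chi(R_g(h))|\,dh\ge n\int|\chi(R_g(h))|^2\,dh\ge n$, using $|\chi|\le d_\chi$. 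There are superpolynomially many characters with $k>K$, so the sum diverges. Even your target $n\,d_\chi^{2-g}$ is $n$ per character when $g=2$. The high-dimensional part is small only because of cancellation in the $h$-integral. That is what \cite[Proposition 4.4]{MageePuderVanHandel} exploits, and you have not supplied it.

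\textbf{The coefficient $a_0$.} The vanishing you aim for is false for general shortest $w$, and so is the ``conversely'' in your $b'=b$ step. Take the shortest representative $w=[a,b]d^{-1}ab\,[d,c]d^{-1}ab$ of Example \ref{exa: -k example}.
\begin{itemize}
\item Since $[a,b]=[d,c]$ in $\Gamma_2$, $w$ represents $\delta^2$ with $\delta=[a,b]d^{-1}ab$.
\item We have $b=2$: the image $2(a+b-d)$ of $\gamma$ in $H_1(\Sigma_2;\mathbb{Z})\cong\mathbb{Z}^4$ forces $b\mid 2$.
\item But $w$ is not a proper power in $F_4$, since its two halves differ, so $b'=1$.
\end{itemize}
For this $w$ the trivial and sign characters contribute $d(1)=1$ in the limit, while $a_0=d(2)=2$. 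Hence $\sum_{1\le|\lambda|\le 4q}d_{\lambda^+(n)}\integral\to 1$. So some nontrivial stable character contributes at order $n^0$, and no signed-pair cancellation among degenerate gluings can occur.

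You would at least have to take $w=u^b$ with $u$ shortest for $\delta$; then $b'=b$ follows from maximality of $b$. But showing that the $k\ge1$ terms are $o(1)$ for that choice is exactly what the paper leaves open, since its Euler-characteristic bound only gives $O(1)$. Falling back on \cite{MageePuder2023} is circular.

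As it stands, your argument proves what the paper's Corollary proves. That is: the expansion, given the imported truncation; $a_{-1}\neq 0\iff\gamma=e$; and $a_0=d(b)$ only when the shortest representative is unique up to cyclic permutation.
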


The primary application of Theorem \ref{thm: bound on integral main}
is towards giving a more accessible proof of the boundedness statement,
namely the statement that $a_{-1}=0$, whenever $\gamma$ is not the
identity, from Theorem \ref{thm:Magee-Puder main theorem}. In the
setting of Theorem \ref{thm: -k-1 bound}, we can recover Theorem
\ref{thm:Magee-Puder main theorem} in full. We now give a rough overview
of the initial approach of \cite{MageePuder2023}, further detailed
in \cite{MageePuderVanHandel}, to explain how this can be done. The
goal is to compute 
\[
\mathbb{E}_{\phi_{n}}\left[\#\mathrm{fix}\left(\phi_{n}(\gamma)\right)\right]=\frac{1}{|\mathbb{X}_{g,n}|}\sum_{\phi_{n}\in\mathbb{X}_{g,n}}\#\mathrm{fix}\left(\phi_{n}(\gamma)\right).
\]
By choosing a representing word $w$ for the conjugacy class of $\gamma$
in $\Gamma_{g}$, we can rewrite this as 
\[
\frac{1}{|\mathbb{X}_{g,n}|}\sum_{\psi_{n}\in\hom\left(F_{2g},S_{n}\right)}\#\mathrm{fix}\left(\psi_{n}(w)\right)\mathds{1}\left[\psi_{n}\left(R_{g}\right)=e\right]
\]
and then Fourier expand the indicator function to obtain 
\[
\frac{1}{|\mathbb{X}_{g,n}|}\frac{1}{n!}\sum_{\psi_{n}\in\hom\left(F_{2g},S_{n}\right)}\#\mathrm{fix}\left(\psi_{n}(w)\right)\sum_{\lambda\vdash n}d_{\lambda}\chi^{\lambda}\left(\psi_{n}\left(R_{g}\right)\right).
\]
This is nothing but 
\begin{equation}
\frac{|S_{n}^{2g}|}{n!|\mathbb{X}_{g,n}|}\sum_{\lambda\vdash n}d_{\lambda}\int_{S_{n}^{2g}}\chi^{\lambda}\left(R_{g}\left(h\right)\right)\mathrm{Tr}\left(w(h)\right)dh.\label{eq: expressing expected trace as FULL sum of integrals}
\end{equation}
From e.g. \cite{LiebeckShalev,Hurwitz}, we have that $|\mathbb{X}_{g,n}|=(n!)^{2g-1}\zeta^{S_{n}}\left(2g-2\right),$
where for a finite group $G,$ $\zeta^{G}(s)\overset{\mathrm{def}}{=}\sum_{\chi\in\mathrm{Irr}(G)}\frac{1}{\left(\dim\chi\right)^{s}}$
is the Witten--Zeta function. Thus, from (\ref{eq: expressing expected trace as FULL sum of integrals})
we obtain 
\begin{equation}
\mathbb{E}_{\phi_{n}}\left[\#\mathrm{fix}\left(\phi_{n}(\gamma)\right)\right]=\frac{1}{\zeta^{S_{n}}\left(2g-2\right)}\sum_{\lambda\vdash n}d_{\lambda}\int_{h}\chi^{\lambda}\left(R_{g}\left(h\right)\right)\mathrm{Tr}\left(w(h)\right)dh.\label{eq: non truncated sum over irreps}
\end{equation}

The next step is to split the sum in (\ref{eq: non truncated sum over irreps})
to a sum over irreducible representations with dimension at most $n^{k}$
for some fixed $k$ and a sum over the higher--dimensional representations.
These two sums can then be dealt with separately, and the latter sum's
contribution to (\ref{eq: non truncated sum over irreps}) can be
bounded using a much simplified version of the approach in \cite{MageePuder2023}.
This can be done as in \cite[Proposition 4.4]{MageePuderVanHandel},
where they show that, if $\ell(w)\le q$ with $\ell(w)$ the word
length of $w$, then (\ref{eq: non truncated sum over irreps}) is
equal to 
\begin{equation}
\frac{2}{\zeta^{S_{n}}\left(2g-2\right)}\sum_{|\lambda|\le4q}d_{\lambda^{+}(n)}\integral+O\left(\left(\frac{(Cq)^{C}}{n}\right)^{q}\right).\label{eq: truncated expression}
\end{equation}
We note that this effective error bound is far stronger than what
is needed for our purposes, and holds uniformly for all $n\ge Cq^{C}.$
For our purposes, $q$ will always be fixed, and we can absorb the
factor $(Cq)^{Cq}$ in to the implied constant, and instead write
(\ref{eq: truncated expression}) as 
\begin{equation}
\frac{2}{\zeta^{S_{n}}\left(2g-2\right)}\sum_{|\lambda|\le4q}d_{\lambda^{+}(n)}\integral+O_{q}\left(n^{-q}\right).\label{eq: bound with non effective error term and truncated expression}
\end{equation}
The application of our Theorem \ref{thm: bound on integral main}
is to bound the contribution from the irreducible representations
corresponding to $\chi^{\lambda^{+}(n)}$, denoted $V^{\lambda^{+}(n)}$,
with $|\lambda|\le4q$ that appear in the former sum. Given that $d_{\lambda^{+}(n)}\sim n^{|\lambda|},$
for each fixed $\lambda\vdash k$ this shows why, in Theorem \ref{thm: bound on integral main},
we must obtain $O\left(\frac{1}{d_{\lambda^{+}(n)}}\right)$ or better.
We again stress that $w$ and therefore $q$ is fixed beforehand and
thus this is always a finite sum and it suffices to bound the contribution
from each individual representation. In \cite[Section 4.3]{MageePuderVanHandel},
the authors analyse $\integral$ for each $\lambda$ with $|\lambda|\le4q,$
and using a similar approach to our combinatorial integration approach
(see \S\ref{sec:Combinatorial-integration}), they show that the
contribution to (\ref{eq: bound with non effective error term and truncated expression})
agrees with a rational function of $n$ whenever $n$ is large enough.
Our main result shows that with this approach, one can actually obtain
the large--$n$ boundedness statement contained in Theorem \ref{thm:Magee-Puder main theorem}.
\begin{cor}
In the same setting as in Theorem \ref{thm:Magee-Puder main theorem},
there is an infinite sequence of numbers 
\[
a_{-1},a_{0},a_{1},a_{2},\dots
\]
depending on $\gamma,$ such that, for any $m\in\mathbb{Z}_{>0}$,
as $n\to\infty,$ 
\[
\mathbb{E}_{\phi_{n}}\left[\#\mathrm{fix}\left(\phi_{n}(\gamma)\right)\right]=a_{-1}n+a_{0}+a_{1}n^{-1}+a_{2}n^{-2}+\dots+a_{m-1}n^{-(m-1)}+O\left(n^{-m}\right)
\]
and, if $\gamma\neq e,$ then $a_{-1}=0.$

If, in addition, $\gamma\in\Gamma_{g}$ with $\gamma=\delta^{b}$
with $b$ maximal is such that, up to taking cyclic permutations,
the conjugacy class of $\gamma$ in $\Gamma_{g}$ has a unique shortest
representative word $w$, then $a_{0}=d(b).$
\end{cor}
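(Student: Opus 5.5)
We plan to derive the corollary from the truncated expansion (\ref{eq: bound with non effective error term and truncated expression}) together with Theorems \ref{thm: bound on integral main} and \ref{thm: -k-1 bound}. First fix $\gamma\neq e$ and a shortest representative $w$ of its conjugacy class, and set $q$ so that $\ell(w)\le q$ and $q\ge m$. Conjugating does not change the number of fixed points, so $\mathbb{E}_{\phi_n}[\#\mathrm{fix}(\phi_n(\gamma))]$ can be computed using $w$. By \cite[Proposition 4.4]{MageePuderVanHandel}, the expectation equals
\[
\frac{2}{\zeta^{S_{n}}\left(2g-2\right)}\sum_{|\lambda|\le4q}d_{\lambda^{+}(n)}\integral+O_{q}\left(n^{-q}\right).
\]
For the existence of the asymptotic expansion we invoke \cite[Section 4.3]{MageePuderVanHandel}. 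It shows that each $d_{\lambda^{+}(n)}\integral$ agrees with a rational function of $n$ for $n$ large. We also use the standard fact that $\zeta^{S_n}(2g-2)=2+O(n^{-(2g-2)})$ has a full expansion in powers of $1/n$, coming from the trivial and sign representations plus the contributions of the $\lambda^{+}(n)$ and their transposes. Combining these, the main term is a finite sum of rational functions divided by a function with an expansion tending to $2$. Hence it has an expansion $a_{-1}n+a_0+\dots$ up to $O(n^{-m})$. When $\gamma=e$ the same argument applies directly with $w$ empty.

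For $a_{-1}=0$ we bound the finitely many summands individually. Theorem \ref{thm: bound on integral main} gives $d_{\lambda^{+}(n)}\integral=O(1)$ for each $\lambda$ with $|\lambda|\le4q$. There are finitely many such $\lambda$, and $1/\zeta^{S_n}(2g-2)\to1/2$, so the whole expectation is $O(1)$. Since it equals $a_{-1}n+a_0+o(1)$, we conclude $a_{-1}=0$. For $k=0$ we only need the trivial bound $|\mathcal{I}_n|\le n$, but in fact Theorem \ref{thm: bound on integral main} with $k=0$ already gives $O(1)$.

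For the second statement, we assume the uniqueness hypothesis. Theorem \ref{thm: -k-1 bound} then gives $d_{\lambda^{+}(n)}\integral=O(n^{k}\cdot n^{-k-1})=O(1/n)$ for every $\lambda$ with $1\le|\lambda|\le4q$. Hence
\[
\mathbb{E}=\frac{2}{\zeta^{S_n}(2g-2)}\,\mathcal{I}_n(w,\emptyset,g)+O(1/n)=\mathbb{E}_{\mathrm{Unif}(S_n^{2g})}[\#\mathrm{fix}(w(h))]+O(1/n).
\]
Here $\mathcal{I}_n(w,\emptyset,g)$ is the expected number of fixed points of a $w$-random permutation. By the known free-group results (\cite{Nica,LinialPuder,PuderParzanchewski2012}), this expectation equals $d(b')+O(1/n)$, where $w=u^{b'}$ with $b'$ maximal in $F_{2g}$.

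The step we expect to be the main obstacle is identifying $b'$ with $b$. Certainly $b'\le b$, since $w=u^{b'}$ in $F_{2g}$ projects to $\gamma=\bar u^{b'}$ in $\Gamma_g$. For the converse, suppose $\gamma=\delta^b$. We would first try to show that a cyclically shortest representative of $\delta$, say $v$, has $v^b$ cyclically shortest in $\Gamma_g$; this should follow from the Dehn/Birman--Series characterisation of shortest words. Then uniqueness up to cyclic permutation forces $w$ to be a cyclic rotation of $v^b$, and hence a proper $b$-th power in $F_{2g}$, giving $b'\ge b$. If this geometric input proves delicate, we fall back on the hyperbolic-geometry fact that geodesic representatives of $\delta^b$ are $b$-fold traversals of the geodesic for $\delta$. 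This gives $a_0=d(b)$.
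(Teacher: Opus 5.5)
Your proposal is correct and follows the paper's own proof. The truncated expansion from \cite{MageePuderVanHandel} together with Theorem \ref{thm: bound on integral main} gives boundedness and hence $a_{-1}=0$. Theorem \ref{thm: -k-1 bound} then reduces $a_0$ to the free-group value $d(b)$ once $w$ is known to be a $b$-th power in $F_{2g}$, which the paper simply asserts; you are more explicit than the paper both on this point and on the existence of the expansion.

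For the step you flag, the Riemannian-geodesic fallback does not directly control word length, but a direct argument does. A shortest closed walk for $\delta^{b}$ projects to a closed walk in $A_{\delta}^{(1)}$ winding $b$ times around the annulus. Splitting it at repeated vertices gives closed walks that are embedded or backtracking, each winding $0$ or $\pm1$ times. So at least $b$ of them represent $\delta$ and have length $\ge\ell(v)$. Hence $v^{b}$ is shortest, and uniqueness forces $w$ to be a cyclic rotation of $v^{b}$.
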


\begin{proof}
Since, for each $\lambda\vdash k,$ we have $d_{\lambda^{+}(n)}=O\left(n^{k}\right),$
Theorem \ref{thm: bound on integral main} combined with the fact
$\frac{2}{\zeta^{S_{n}}\left(2g-2\right)}\to1$ (from e.g. \cite[Proposition 1.1]{LiebeckShalev})
gives 
\[
\frac{2}{\zeta^{S_{n}}\left(2g-2\right)}\sum_{|\lambda|\le4q}d_{\lambda^{+}(n)}\integral=O_{q,g}(1)
\]
 as $n\to\infty$ immediately. 

In the additional setting, we note that such a shortest representative
word $w$ must also be a proper power in $F_{2g}$, that is we must
have $w=u^{b},$ where $u$ is the unique (up to cyclic permutation)
shortest representative for the conjugacy class of $\delta.$ Theorem
\ref{thm: -k-1 bound} then says that (\ref{eq: bound with non effective error term and truncated expression})
is equal to
\[
\frac{2}{\zeta^{S_{n}}\left(2g-2\right)}\mathcal{I}\left(u^{b},\emptyset,g\right)+O_{q,g}\left(\frac{1}{n}\right).
\]
Then, $\mathcal{I}\left(u^{b},\emptyset,g\right)$ is just the expected
number of fixed points of $u^{b}(h)$, which is exactly $d(b)+O_{q}\left(\frac{1}{n}\right)$
by e.g. \cite{LinialPuder}. 
\end{proof}

\subsection{\label{subsec:Other-related-work}Related work and further questions}

\subsubsection*{Unitary groups}

The most closely related results to the current paper are those of
Magee in \cite{Magee2021}, which builds on \cite{Magee22}. In \cite[Theorem 3.1]{Magee2021},
Magee proves the bound 
\[
\mathcal{J}_{n}(w,\mu,\nu)\overset{\mathrm{def}}{=}\int_{U(n)^{2g}}s_{\mu,\nu}\left(R_{g}(h)\right)\mathrm{Tr}\left(w(h)\right)dh\ll n^{-k-\ell},
\]
where $\mu\vdash k,\nu\vdash\ell$ and $s_{\mu,\nu}$ is the Schur
polynomial corresponding to the character of the irreducible representation
of $U(n)$ with signature $\left(\mu_{1},\dots,\mu_{\ell(\mu)},\underbrace{0,\dots,0}_{n-k-\ell},-\nu_{\ell(\nu)},\dots,-\nu_{1}\right).$
This is a direct unitary analogue of our Theorem \ref{thm: bound on integral main}.
Our approach is inspired by Magee's combinatorial integration approach.
In fact, Magee obtains the above bound by reducing the problem to
bounding the Euler characteristic of a family of surfaces. The dual
graphs of these surfaces are special cases of the graphs we construct
in \S\ref{subsec:A-core-graph}.

\subsubsection*{Improved bound}

One could try to improve the bound in Theorem \ref{thm: bound on integral main}
in order to obtain the $a_{0}=d(b)$ term in Theorem \ref{thm:Magee-Puder main theorem}
for all $\gamma\in\Gamma_{g}$. Interestingly, in \S\ref{subsec:Bounding},
we show that assuming that $w$ is a shortest representative for the
conjugacy class of $\gamma$ is not sufficient to improve on the bound
in Theorem \ref{thm: bound on integral main}. However, it seems plausible
that a more careful choice of shortest representative $w$ could yield
better bounds on $\integral$ for $k\ge1$, from which one could extract
the $a_{0}=d(b)$ term from $\mathcal{I}_{n}(w,\emptyset,g),$ just
as we do in the case of the conjugacy class of $\gamma$ having a
unique shortest representative word $w$. This will be the subject
of future investigation. 

\subsubsection*{Sharper estimates}

Consider the same question but with $\Gamma_{g}$ replaced by a free
group $F_{r}.$ For a fixed word $w\in F_{r},$ 
\begin{equation}
\mathbb{E}_{\phi_{n}\in\hom\left(F_{r},S_{n}\right)}\left[\#\mathrm{fix}\left(\phi_{n}(w)\right)\right]\label{eq: expected trace for free}
\end{equation}
is the expected number of times the loop $w$ in $\mathcal{B}_{r}=\overset{\mathrm{def}}{=}\bigvee_{i=1}^{r}S_{1}$
lifts to a closed loop in a uniformly random $n$--sheeted cover.
In this setting, initial bounds of the form $d(b)+O\left(\frac{1}{n}\right)$
were obtained by \cite{Nica,LinialPuder}. Much sharper results were
later obtained by Puder and Parzanchevski \cite{PuderParzanchewski2012},
who showed that the first non--zero coefficient in the asymptotic
expansion for (\ref{eq: expected trace for free}) is determined by
an algebraic property of the fixed word $w$ called the \emph{primitvity
rank}. It is an interesting question to try and determine the first
non--zero term in the asymptotic expansion of (\ref{eq: expected trce surface})
for surface groups.\footnote{In both settings, the expected number of fixed points is $1+\mathbb{E}\left[\chi^{\mathrm{std}}\left(\phi_{n}(w)\right)\right],$
so we really mean the first non--zero term in the asymptotic expansion
of the latter term.}

\subsubsection*{Alternative characters}

One can also consider replacing $\#\mathrm{fix}$ with any character
$\chi$ of the symmetric group. Again in the free group setting, asymptotic
bounds have been obtained by Hanany and Puder in \cite{HananyPuder}
and also by the author in \cite{Cassidy2} for all stable representations.
For surface groups, for any character $\chi$ of $S_{n}$ and $\gamma\in\Gamma_{g},$
the functions 
\[
\phi_{n}\mapsto\chi\left(\phi_{n}\left(\gamma\right)\right)
\]
are called Wilson loops in the physics literature. Theorem \ref{thm:Magee-Puder main theorem}
is focused on the character $\#\mathrm{fix}.$ It may be interesting
to determine whether the approach to Theorem \ref{thm:Magee-Puder main theorem}
via the methods of this paper can be applied for other characters
$\chi.$

\subsubsection*{Other discrete groups}

The current paper reveals the primary obstacles to generalizing the
combinatorial integration method towards obtaining similar statements
to that of Theorem \ref{thm:Magee-Puder main theorem} for more general
one--relator groups $\Gamma$. As mentioned previously, the primary
obstacle to obtaining useful bounds on integrals of the form $\integral.$
with $R_{g}$ replaced by arbitrary relator $R,$ is a combinatorial
characterisation of a word $w$ being a shortest representative of
the conjugacy class of $\gamma\in\Gamma.$ Again, in the present paper,
this step depends on $\Gamma_{g}$ being the fundamental group of
a closed orientable surface. 

For the application towards estimating $\mathbb{E}_{\phi_{n}}\left[\#\mathrm{fix}\left(\phi_{n}(\gamma)\right)\right]$
using the approach detailed in the previous section, there are two
further obstacles. 
\begin{itemize}
\item \textbf{Truncation: }The integrals we compute in this paper for surface
groups apply only when the integrand consists of $\#\mathrm{fix}(w(h))$
multiplied by some \emph{stable} irreducible character of the image
of the relator. A key step in using these integrals in the application
towards computing the expected number of fixed points is bounding
the contribution from the higher dimensional representations. For
surface groups, this can be done in \cite{MageePuderVanHandel} using
a simplified version of the approach in \cite{MageePuder2023} and,
at present, appears to rely on the group relator having a very specific
structure. 
\item \textbf{Normalization: }Another key ingredient is being able to evaluate
the large $n$ limit of $|\hom\left(\Gamma,S_{n}\right)|.$ For surface
groups (and more generally Fuchsian groups) this was already known,
but is not known in full generality. 
\end{itemize}
We highlight here some forthcoming work of Klukowski \cite{Klukowski}.
Independently and concurrently with the present work, Klukowski uses
a different approach and generalizes the statement of Theorem \ref{thm:Magee-Puder main theorem}
to arbitrary Fuchsian groups.

\section{\label{sec:Background}Background}

\subsection{\label{subsec:Representation-theory-of}Representation theory of
$S_{n}$ }

The equivalence classes of finite dimensional complex irreducible
representations of $S_{n}$ are indexed by Young diagrams $\lambda$
with $n$ boxes. We write $\lambda=(\lambda_{1},\dots,\lambda_{\ell(\lambda)})\vdash n$
for the Young diagram $\lambda$ with $\ell(\lambda)$ non--empty
rows of boxes, with row $i$ containing $\lambda_{i}$ boxes and $\lambda_{1}\ge\lambda_{2}\ge\dots\lambda_{\ell(\lambda)}>0$
and $\sum_{i}\lambda_{i}=n.$ For fixed $k$ and each Young diagram
$\lambda\vdash k,$ we define a family of irreducible representations
of $\{S_{n}\}_{n\ge2k}$ by 
\[
\{\lambda^{+}(n)\}_{n\ge2k},
\]
whereby $\lambda^{+}(n)$ is a Young diagram with $n$ boxes, obtained
by adding a row of $n-k$ boxes to the top of $\lambda.$ That is,
if $\lambda=(\lambda_{1},\dots,\lambda_{\ell(\lambda)})\vdash k,$
then\footnote{Strictly speaking, this family can be defined as long as $n\ge k+\lambda_{1}.$
For our purposes, this will not be important. }
\[
\lambda^{+}(n)\overset{\mathrm{def}}{=}\left(n-k,\lambda_{1},\dots,\lambda_{\ell(\lambda)}\right)\vdash n.
\]
Such representations are called stable representations and they have
dimension $d_{\lambda^{+}(n)}\sim n^{k}$. They can be constructed
as explicit subspaces (with a known multiplicity) of $\cnk$, where
$\C^{n}$ is the defining permutation representation of $S_{n}$,
via Schur--Weyl--Jones duality. It is this construction that will
allow us to determine bounds on $\integral$. An alternative approach
to working with these representations is discussed in \cite[Appendix B]{HananyPuder},
where they can be encoded by considering cycle counts of permutations.
We note that these representations and their conjugates (obtained
by transposing the rows and columns of the Young diagram) characterise
all irreducible representations of $S_{n}$ of dimension $\le Cn^{k}$,
see e.g. \cite[Lemma 2.8]{Etingof}.

\subsection{\label{subsec:A-projection-formula}A projection formula}

Here we describe a projection formula obtained by the author in \cite{Cassidy2023}.
For each positive integer $N$, denote by $\mathrm{Part}\left([N]\right)$
the set of set partitions of $[N].$ On $\cnk$ there is a well known
Schur--Weyl--Jones duality due to Jones \cite{Jones}, by which
it is known that the diagonal action of the symmetric group $S_{n}$
and the action of the partition algebra $\P$ (an algebra that consists
of linear combinations of elements of $\part$ with a natural multiplication
and action on $\cnk$ defined) generate full mutual centralizers of
one another. There is a refinement of this due to Sam and Snowden
\cite{SamSnowden}. Define the following subspace 
\[
A_{k}(n)\overset{\mathrm{def}}{=}\left\langle e_{i_{1}}\otimes\dots\otimes e_{i_{k}}:\ i_{1},\dots,i_{k}\ \mathrm{all\ distinct}\right\rangle \cap\bigcap_{j=1}^{k}\ker(T_{j}),
\]
where $T_{j}:\cnk\to\left(C^{n}\right)^{\otimes k-1}$ is a contraction
map, deleting the $j^{\mathrm{th}}$ coordinate. By \cite[Section 6.1.3]{SamSnowden}
(or see e.g. \cite{Cassidy2023}), on $\A,$ the action of $\P$ descends
to an action of an embedded copy of $\csk$ via 

\[
S_{k}\overset{\iota}{\hookrightarrow}\part,
\]
viewing each permutation as a matching of $\{1,\dots,k\}$ with $\{k+1,\dots,2k\}$
in to $k$ pairs. One can view these as diagrams consisting of $2$
rows of $k$ vertices with $k$ edges pairing vertices in opposing
rows. Moreover, we get a decomposition of $\A$ as a $\C\left[S_{n}\times S_{k}\right]$
representation 
\[
\A\cong\bigoplus_{\lambda\vdash k}V^{\lambda^{+}(n)}\otimes V^{\lambda}.
\]
Hence, each irreducible subspace in the above decomposition is isomorphic
to $d_{\lambda}$ copies of $V^{\lambda^{+}(n)}$ as a representation
of $S_{n}.$ 

Given a set partition $\pi\in\part,$ define an endomorphism $P_{\pi}\in\mathrm{End}\left(\cnk\right)$
by 
\[
\left\langle P_{\pi}\left(e_{i_{1}}\otimes\dots\otimes e_{i_{k}}\right),e_{i_{k+1}}\otimes\dots\otimes e_{i_{2k}}\right\rangle \overset{\mathrm{def}}{=}\begin{cases}
1 & \mathrm{if}\ i_{j}=i_{\ell}\ \mathrm{iff}\ j\sim\ell\ \mathrm{in}\ \pi\\
0 & \mathrm{otherwise}.
\end{cases}
\]
We note that, if $\pi=\iota(\tau)$ for some $\tau\in S_{k},$ then
$P_{\pi}$ permutes the tensor coordinates according to $\tau.$ We
write $\pi_{1}\le\pi_{2}$ if every block of $\pi_{1}$ can be obtained
from $\pi_{2}$ by splitting the blocks of $\pi_{2}$ in to smaller
blocks. For $\pi\le\iota(\tau)$, this means that the diagram for
$\pi$ can be obtained from that of $\iota(\tau)$ by removing edges.
We will write 
\[
\sum_{\pi\in\part}^{\le S_{k}}
\]
to indicate that we are summing over all set partitions $\pi\in\part$
that are $\le\iota(\tau)$ for any $\tau\in S_{k}.$ These are all
set partitions for which, in the corresponding diagram, each block
contains at most one vertex from each row. We write $|\pi|$ for the
number of blocks of $\pi.$ Below is the formula obtained by the author
in \cite{Cassidy2023}. 
\begin{prop}
\label{prop: projection formula}For any $k\in\mathbb{Z}_{\ge0}$
and for any $\lambda\vdash k$ and $n\ge2k$, 
\[
\mathcal{Q}_{\lambda,n}\overset{\mathrm{def}}{=}\sum_{\pi\in\part}^{\le S_{k}}c(n,k,\lambda,\pi)P_{\pi},
\]
with 
\[
c(n,k,\lambda,\pi)\overset{\mathrm{def}}{=}\frac{d_{\lambda^{+}(n)}(-1)^{|\pi|+k}}{(n)_{|\pi|}}\sum_{\tau\in S_{k},\iota(\tau)\ge\pi}\chi^{\lambda}(\tau),
\]
is the orthogonal projection from $\cnk$ to $d_{\lambda}$ copies
of $V^{\lambda^{+}(n)}.$
\end{prop}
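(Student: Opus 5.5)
The plan is to build the projection in three factors: first project onto $\A$, then apply the central idempotent of $\csk$ attached to $\lambda$, and finally expand the result in the basis $\{P_{\pi}\}$.

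\textbf{Step 1: the projection onto $\A$.} Let $\D=\dkn$ be the span of basis tensors with all indices distinct. The orthogonal projection onto $\D$ is diagonal in the standard basis. On $\D$, the map $\bigcap_j\ker(T_j)$ is cut out by an inclusion--exclusion in the coordinates. Concretely, the projection onto $\A$ is the product over $j$ of $(1-\frac{1}{n-\dots}E_j)$-type corrections, where $E_j$ averages the $j$-th coordinate over admissible values. Rather than compute this directly, I would use the decomposition $\A\cong\bigoplus_{\lambda}V^{\lambda^{+}(n)}\otimes V^{\lambda}$ together with Schur--Weyl--Jones duality. This gives that the projection onto $\A$, and hence onto its $\lambda$-isotypic part, lies in the image of $\P$, i.e. in the span of the $P_{\pi}$. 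It is also $S_n$-equivariant and commutes with $\iota(S_k)$.

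\textbf{Step 2: the candidate projection.} The $\lambda$-isotypic projection is
\[
\Q_{\lambda,n}=p_{\A}\circ\frac{d_{\lambda}}{k!}\sum_{\tau\in S_k}\chi^{\lambda}(\tau)P_{\iota(\tau)},
\]
where $p_{\A}$ denotes the projection onto $\A$. This is correct because $S_k$ acts on $\A$ through $\iota$ and commutes with $p_{\A}$. It therefore remains to expand $p_{\A}\circ P_{\iota(\tau)}$ in the $P_{\pi}$.

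\textbf{Step 3: identifying the coefficients.} I would compute coefficients by trace pairing. For each $\pi\le\iota(\tau)$, the diagram basis $P_\pi$ (indicator of exact equality pattern) restricted to the relevant block structure, together with Möbius inversion on the partition lattice, should give the signs $(-1)^{|\pi|+k}$ and the factors $1/(n)_{|\pi|}$. The term $(n)_{|\pi|}$ counts the injective labellings of the blocks.

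To fix the overall normalization, I would compare traces. The trace of $\Q_{\lambda,n}$ must equal $d_{\lambda}d_{\lambda^{+}(n)}$. Also $\mathrm{Tr}(P_{\pi})=(n)_{|\pi|}$ when $\pi$ has every block meeting both rows identically, and $0$ otherwise. This check explains the prefactor $d_{\lambda^{+}(n)}$, which replaces $d_\lambda/k!$ via a hook-type identity.

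\textbf{Step 4: verification.} Finally, I would verify that $\Q_{\lambda,n}$ is idempotent, self-adjoint, and has the correct image. Self-adjointness follows from the symmetry of the coefficients under flipping the diagram, since $\chi^{\lambda}(\tau)=\chi^{\lambda}(\tau^{-1})$. For the image, I would check that $\Q_{\lambda,n}$ kills every vector orthogonal to $\A$ and restricts on $\A$ to the isotypic idempotent.

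\textbf{Main obstacle.} The main obstacle is Step 3, the exact coefficient extraction and the Möbius computation producing $1/(n)_{|\pi|}$ and the factor $d_{\lambda^+(n)}$. I would attack it by working with orbit bases and testing on basis vectors $e_{i_1}\otimes\dots\otimes e_{i_k}$ with prescribed equality patterns.
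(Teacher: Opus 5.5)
The paper gives no proof of this proposition; it quotes it from the author's earlier work \cite{Cassidy2023}. Your proposal therefore has to stand on its own, and as written it has a genuine gap exactly where the content of the statement lies. Steps 1 and 2 are correct, but they only restate the target as $\mathcal{Q}_{\lambda,n}=e_\lambda p_A$, where $p_A$ is the orthogonal projection onto $A_k(n)$ and $e_\lambda=\frac{d_\lambda}{k!}\sum_{\tau}\chi^\lambda(\tau)P_{\iota(\tau)}$. Schur--Weyl--Jones duality places this operator in the span of all $P_\pi$, but it gives neither the support condition $\pi\le\iota(\tau)$ nor any coefficient.

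Step 3, which is meant to produce the coefficients, is only a hope, and the mechanism you indicate cannot work. Expanding $p_A$ is as hard as the proposition itself. By the statement, $p_A=\sum_{\mu\vdash k}\mathcal{Q}_{\mu,n}$ has coefficient $\frac{(-1)^{|\pi|+k}}{(n)_{|\pi|}}\sum_{\iota(\tau)\ge\pi}\sum_{\mu\vdash k}d_{\mu^+(n)}\chi^\mu(\tau)$ on $P_\pi$. For $k=2$ this is $-1/(n)_2\neq0$ on $P_{\iota((12))}$. So $p_A\circ P_{\iota(\tau)}$ is not supported on $\{\pi\le\iota(\tau)\}$, and no M\"obius inversion over that interval will produce it.

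A trace comparison cannot do the job either. It fixes one scalar, so it can only normalise once the shape $\frac{(-1)^{|\pi|+k}}{(n)_{|\pi|}}\sum_{\iota(\tau)\ge\pi}\chi^\lambda(\tau)$ has been established up to a common factor, and that is precisely what is missing. ``Replacing $d_\lambda/k!$ by $d_{\lambda^+(n)}$'' is not a legitimate move. The operator $p_A\circ P_{\iota(\tau)}$ does not depend on $\lambda$, while $d_{\lambda^+(n)}/d_\lambda$ does: for $k=2$ it is $n(n-3)/2$ versus $(n-1)(n-2)/2$.

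The route that does work is the verification you sketch in Step 4, but it needs two computations you have not supplied.

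\emph{First, $\mathcal{Q}_{\lambda,n}$ kills $A_k(n)^\perp$.} Here $A_k(n)^\perp=\mathrm{span}\{e_I: I \text{ not injective}\}+\sum_j\mathrm{im}\,T_j^*$. The first summand is immediate, since no $\pi\le\iota(\tau)$ has two top vertices in one block. For $T_j^*e_J$ paired with $e_L$, fix the equality pattern $\rho$ of $(J,L)$. The inserted index either equals a bottom value $L_m$ that is a singleton block of $\rho$, giving $|\pi|=|\rho|$, or it is new, with $n-|\rho|$ choices and $|\pi|=|\rho|+1$. These contributions cancel for two reasons: $(n)_{|\rho|+1}=(n-|\rho|)(n)_{|\rho|}$, and every $\tau$ lying above $\rho$ with $\{j\}$ added as a singleton sends $j$ to exactly one such $m$.

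\emph{Second, the key lemma.} For $v\in A_k(n)$ and each $\pi$ in the sum,
\[
P_\pi v=(-1)^{\mathrm{del}(\pi)}\sum_{\tau\in S_k,\ \iota(\tau)\ge\pi}P_{\iota(\tau)}v .
\]
To prove it, apply $\sum_{i\notin S}=-\sum_{i\in S}$, which follows from $T_jv=0$, to each unmatched top coordinate in turn, with $S$ the set of bottom values. Injectivity of the support of $v$ then forces a bijection onto the unmatched bottom values, that is, a completion $\tau$ of $\pi$.

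With the lemma, $\mathcal{Q}_{\lambda,n}|_{A_k(n)}$ is the image of the central element $d_{\lambda^+(n)}\bigl(\sum_\sigma\chi^\lambda(\sigma)\sigma\bigr)\bigl(\sum_\rho\phi(\rho)\rho\bigr)$. Here $\phi(\rho)=\sum_{r}\binom{\mathrm{fix}(\rho)}{r}\frac{1}{(n)_{2k-r}}$ is a class function. Hence $\mathcal{Q}_{\lambda,n}|_{A_k(n)}=\kappa e_\lambda$ for some scalar $\kappa$, and with the first computation, $\mathcal{Q}_{\lambda,n}=\kappa\,e_\lambda p_A$.

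Only at this point does your trace check legitimately finish the proof. By the decomposition of $A_k(n)$, $\mathrm{Tr}\,\mathcal{Q}_{\lambda,n}=c(n,k,\lambda,\iota(\mathrm{id}))\,(n)_k=d_\lambda d_{\lambda^+(n)}=\mathrm{Tr}(e_\lambda p_A)$, which forces $\kappa=1$. Idempotence and self-adjointness then come for free.
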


Given $\pi\in\part$ with $\pi\le\iota(\tau),$ we write 
\[
\mathrm{del}(\pi)\overset{\mathrm{def}}{=}|\pi|-k.
\]
The notation reflects that this quantity is exactly the number of
edges deleted from the diagram for $\iota(\tau)$ to obtain the diagram
for $\pi.$ In particular, $(n)_{|\pi|}\sim n^{|\pi|}=n^{\mathrm{del}(\pi)+k},$
and given that $d_{\lambda^{+}(n)}\sim n^{k}$ for $\lambda\vdash k,$
we have that 
\begin{equation}
c(n,k,\lambda,\pi)=O\left(n^{-\mathrm{del}(\pi)}\right).\label{eq: coeff in projection is O(-del)}
\end{equation}

\subsection{\label{subsec:The-Weingarten-calculus}The Weingarten calculus for
$S_{n}$}

The Weingarten calculus is a powerful tool for integrating products
of matrix coefficients over compact groups, based on Schur--Weyl
duality. Initially developed by Weingarten \cite{Weingarten} and
Xu \cite{Xu}, it was then put on a more rigorous footing by Collins
\cite{Collins2003} and Collins and Sniady \cite{CollinsSniady2006}.
In the case of integrating over $S_{n}$, the method reduces to elementary
combinatorics, see e.g. \cite[Section 3]{CollinsWeingartenShort}.
We give here the formulation of the Weingarten calculus for $S_{n}$
to be used in this paper. Denote the diagonal action of $S_{n}$ on
$\cnk$ by the representation 
\[
\rho:S_{n}\to\mathrm{End}\left(\cnk\right).
\]
For simplicity, we will write $g_{ij}$ for the matrix coefficient
$\rho(g)_{ij}.$ We define, for any multi--index $I=(i_{1},\dots,i_{k})$
and for any set partition $\pi\in\mathrm{Part}([k]),$ a map
\[
\pi\left(I\right)=\begin{cases}
1 & \mathrm{if},\ j\sim l\ \mathrm{in}\ \pi\implies i_{j}=i_{l}\\
0 & \mathrm{otherwise.}
\end{cases}
\]

\begin{thm}
\label{thm: Weingarten theorem}For any multi--indices $I=(i_{1},\dots,i_{k})$
and $J=(j_{1},\dots,j_{k})\in[n]^{k},$ we have 
\[
\int_{S_{n}}\prod_{\ell=1}^{k}g_{i_{\ell}j_{\ell}}=\sum_{\sigma,\tau\in\mathrm{Part}([k])}\sigma\left(I\right)\tau\left(J\right)\mathrm{Wg}_{n,k}(\sigma,\tau),
\]
where 
\[
\mathrm{Wg}_{n,k}(\sigma,\tau)\overset{\mathrm{def}}{=}\sum_{\pi\le\sigma\wedge\tau}\mu(\pi,\sigma)\mu(\pi,\tau)\frac{1}{(n)_{|\pi|}}
\]
is the Weingarten function for $S_{n}.$ 
\end{thm}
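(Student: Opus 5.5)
The plan is to evaluate both sides of Theorem \ref{thm: Weingarten theorem} directly and show that each equals
\[
\frac{\mathbb{1}\left[\ker(I)=\ker(J)\right]}{(n)_{|\ker(I)|}}.
\]
Here, for a multi-index $I\in[n]^{k}$, $\ker(I)\in\mathrm{Part}([k])$ is the partition with $j\sim l$ if and only if $i_{j}=i_{l}$. The only input is Möbius inversion on the lattice $\mathrm{Part}([k])$. No representation theory is needed, since for $S_{n}$ the matrix coefficients are $0$--$1$ valued.

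\textbf{Left-hand side.} Although $\rho$ is the action on $\cnk$, the product $\prod_{\ell}g_{i_{\ell}j_{\ell}}$ is understood as a product of entries of the $n\times n$ permutation matrix of $g$. With $g e_{j}=e_{g(j)}$, we have $g_{ij}=\mathbb{1}[g(j)=i]$. Hence the integral is the probability that a uniform $g\in S_{n}$ satisfies $g(j_{\ell})=i_{\ell}$ for every $\ell$.

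Since $g$ is a bijection, this event is empty unless $i_{a}=i_{b}\iff j_{a}=j_{b}$ for all $a,b$, that is, unless $\ker(I)=\ker(J)$. When $\ker(I)=\ker(J)$, the constraints prescribe $g$ on exactly $|\ker(J)|$ distinct points with distinct images. The number of such $g$ is $(n-|\ker(J)|)!$, so the probability is $1/(n)_{|\ker(J)|}$.

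\textbf{Right-hand side.} By definition, $\sigma(I)=1$ exactly when $\sigma\le\ker(I)$, and likewise $\tau(J)=1$ exactly when $\tau\le\ker(J)$. Substituting the definition of $\mathrm{Wg}_{n,k}$ and interchanging the finite sums, so that $\pi$ is the outer variable, gives
\[
\sum_{\pi\in\mathrm{Part}([k])}\frac{1}{(n)_{|\pi|}}\Bigl(\sum_{\pi\le\sigma\le\ker(I)}\mu(\pi,\sigma)\Bigr)\Bigl(\sum_{\pi\le\tau\le\ker(J)}\mu(\pi,\tau)\Bigr).
\]
The condition $\pi\le\sigma\wedge\tau$ is equivalent to $\pi\le\sigma$ and $\pi\le\tau$, which is what justifies the interchange. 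By the defining property of the Möbius function, $\sum_{\pi\le\sigma\le\rho}\mu(\pi,\sigma)=\delta_{\pi,\rho}$, and the same holds for the sum over $\tau$. The double sum therefore collapses to the single term $\pi=\ker(I)=\ker(J)$, and vanishes if $\ker(I)\neq\ker(J)$. This matches the left-hand side.

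\textbf{Main care point.} The only delicate step is the bookkeeping in the interchange of summation: the range $\pi\le\sigma\wedge\tau$ must be converted into independent intervals $[\pi,\ker(I)]$ and $[\pi,\ker(J)]$. One must also use the Möbius identity in the correct variable, summing over the upper element $\sigma$ for fixed lower element $\pi$, which is the valid form $\sum_{\pi\le\sigma\le\rho}\mu(\pi,\sigma)=\delta_{\pi\rho}$. Beyond this, the proof is elementary counting.
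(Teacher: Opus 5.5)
Your proof is correct. The paper does not prove this statement itself. It quotes it from the Weingarten-calculus literature (the survey of Collins, Matsumoto and Novak), so the relevant comparison is with the standard derivation given there.

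\textbf{The standard route.} One notes that $\int_{S_n}\rho(g)\,dg$ is the orthogonal projection onto the $S_n$-invariant vectors in $(\mathbb{C}^n)^{\otimes k}$. For $n\ge k$ these have basis $\xi_\sigma=\sum_{I:\,\sigma\le\ker(I)}e_I$, $\sigma\in\mathrm{Part}([k])$. Writing the projection in this basis gives the stated formula, with $\mathrm{Wg}_{n,k}$ the inverse of the Gram matrix $G(\sigma,\tau)=\langle\xi_\sigma,\xi_\tau\rangle=n^{|\sigma\vee\tau|}$. The M\"obius expression then comes from the identity $n^{|\sigma\vee\tau|}=\sum_{\rho\ge\sigma\vee\tau}(n)_{|\rho|}$. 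This factors $G=ZDZ^{T}$ with $Z(\sigma,\rho)=\mathds{1}[\sigma\le\rho]$ and $D=\mathrm{diag}((n)_{|\rho|})$, and inverting via $Z^{-1}=\mu$ yields exactly $\sum_{\pi\le\sigma\wedge\tau}\mu(\pi,\sigma)\mu(\pi,\tau)/(n)_{|\pi|}$.

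\textbf{Your route.} You bypass the projection and Gram matrix entirely. You evaluate the left side as an explicit probability, and collapse the right side by M\"obius inversion to $\mathds{1}[\ker(I)=\ker(J)]/(n)_{|\ker(I)|}$. This is shorter and fully elementary. It also delivers everything the paper actually uses: the formula itself, and the bound $O(n^{-|\sigma\wedge\tau|})$, which is read off from it. What the Gram-matrix route adds is an explanation of where $\mathrm{Wg}_{n,k}$ comes from, namely $\mathrm{Wg}_{n,k}=G^{-1}$. That places it in the same framework as the unitary Weingarten calculus underlying Magee's work, which the paper imitates.

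\textbf{A hypothesis to state.} The statement implicitly requires $n\ge k$. For $\sigma=\tau=\pi$ the finest partition, the term $1/(n)_k$ appears, and it is undefined when $n<k$. Your interchange of the finite sums is legitimate exactly under this assumption. It is harmless in the paper, where the theorem is applied with $k$ replaced by the fixed number $2k+|w_x^+|+|w_x^-|$ and $n\to\infty$.
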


In the above Theorem, $\mu$ is the M{\"o}bius function for $\mathrm{Part}([k])$,
which can be defined as in \cite{Rota}. We will use the bound $\mathrm{Wg}_{n,k}(\sigma,\tau)=O\left(\frac{1}{n^{|\sigma\wedge\tau|}}\right)$
when we take expectations in \S\ref{sec:Combinatorial-integration}.
Given two multi--indices $I=(i_{1},\dots i_{k_{I}})$ and $J=(j_{1},\dots,j_{k_{J}}),$
we will write $I\sqcup J$ for the multi--index given by $(i_{1},\dots i_{k_{I}},j_{1},\dots,j_{k_{J}}).$
We will also use the notation $e_{I}=e_{i_{1}}\otimes\dots\otimes e_{i_{k}}.$

\subsection{\label{subsec:Free-groups-and}Free groups and surface groups}

For each fixed integer $g\ge2,$ we will write $F_{2g}$ for the free
group generated by $\{a_{1},b_{1},\dots,a_{g},b_{g}\},$ that is 
\[
F_{2g}\overset{\mathrm{def}}{=}\left\langle a_{1},b_{1},\dots,a_{g},b_{g}\right\rangle .
\]
We will also write 
\[
R_{g}\overset{\mathrm{def}}{=}\left[a_{1},b_{1}\right]\dots\left[a_{g},b_{g}\right],
\]
with the convention $[a,b]=aba^{-1}b^{-1}.$ For ease, sometimes we
will fix $g=2$ and write $F_{4}=\left\langle a,b,c,d\right\rangle $
and $R_{2}=[a,b][c,d].$ We denote by $\Gamma_{g}\overset{\mathrm{def}}{=}\left\langle a_{1},b_{1},\dots,a_{g},b_{g}:R_{g}\right\rangle ,$an
orientable surface group of genus $g$. 

There is a quotient map $F_{2g}\to\Gamma_{g}$ by reducing modulo
$R_{g}$. An element $w\in F_{2g}$ is a representative of the conjugacy
class of $\gamma\in\Gamma_{g}$ if its image under the quotient map
is in the conjugacy class of $\gamma$ in $\Gamma_{g}.$ We view the
elements $w\in F_{2g}$ as words in the alphabet $\{a_{1}^{\pm1},b_{1}^{\pm1},\dots,a_{g}^{\pm1},b_{g}^{\pm1}\}.$
Then $w$ is cyclically reduced if no two consecutive letters in $w$
are mutual inverses, and the first and last letter are not mutual
inverses (i.e. we do not see $b_{1}b_{1}^{-1}$ for example in any
cyclic permutation of the word). We write $|w|$ or $\ell(w)$ for
the length of the word. We say that $w$ is a shortest representative
of the conjugacy class of $\gamma$ if it has minimal length over
all words with image (under the quotient map) in the conjugacy class
of $\gamma.$ In practice, this means that neither $w$ nor any of
its cyclic permutations can be shortened via free reduction (i.e.
$w$ is cyclically reduced) or via the relation. For example, 
\[
w=ba^{-1}b^{-1}cd\in F_{4}
\]
is cyclically reduced but 
\[
w'=a^{-1}dc\in F_{4}
\]
represents the same conjugacy class in $\Gamma_{g}$, so $w$ is not
a shortest representative its conjugacy class in $\Gamma_{g}.$

\subsection*{Acknowledgements}

We thank Michael Magee and Henry Wilton for their helpful discussions
about this project. Part of the work for this project was carried
out whilst the author was part of a project that received funding
from the European Research Council (ERC) under the European Union\textquoteright s
Horizon 2020 research and innovation programme (grant agreement No
949143).

\section{\label{sec:Combinatorial-integration}Combinatorial integration}

\subsection{\label{subsec:A-combinatorial-formula}A combinatorial formula for
$\protect\integral$}

We set $g=2$ for simplicity of exposition, the proofs extend to any
fixed $g$, and write $\Gamma_{2}=\left\langle a,b,c,d:R_{2}\right\rangle $.
We follow a similar combinatorial integration strategy to that of
the author in \cite{Cassidy2} and Magee in \cite{Magee2021}. The
strategy is to use Proposition \ref{prop: projection formula} to
translate $\chi^{\lambda^{+}(n)}\left(R_{g}(h)\right)$ in to a trace
in $\cnk.$ Indeed, we have that $\mathcal{Q}_{\lambda,n}^{2}=\mathcal{Q}_{\lambda,n}$
and that $\mathcal{Q}_{\lambda,n}$ commutes with the action of $S_{n}$
on $\cnk.$ Thus, we rewrite
\[
\chi^{\lambda^{+}(n)}\left(R_{g}(h)\right)=\frac{1}{d_{\lambda}}\mathrm{Tr}_{\cnk}\left[h_{a}\Q_{\lambda,n}h_{b}\Q_{\lambda,n}\dots h_{c}^{-1}\Q_{\lambda,n}h_{d}^{-1}\Q_{\lambda,n}\right].
\]
We can then compute the expectation of $\integral$ using the Weingarten
calculus for the symmetric group. This enables us to write $\integral$
in a similar fashion to a classical `core graph expansion' (as in
\cite{LinialPuder} for example), from which we derive our bound.
As in \cite{Cassidy2}, the construction of $\left(V^{\lambda^{+}(n)}\right)^{\oplus d_{\lambda}}$
inside $\A\subseteq\cnk$ is crucial. It allows us to refine the usual
Weingarten calculus by exhibiting cancellations that are not immediately
obvious. This is reflected in Lemma \ref{lem: distinctness of vertices}
and Lemma \ref{lem:NO-SINGLETONS}. 

Now, assume $\gamma\neq e$ and $w\in F_{4}$ is a shortest representative
of the conjugacy class of $\gamma.$ Write $w$ as a cyclically reduced
word in the generators 
\begin{equation}
w=x_{1}^{\epsilon_{1}}\dots x_{|w|}^{\epsilon_{|w|}},\label{eq: writing w=00003Dx_1=00005Cdotsx_|w|}
\end{equation}
 with each $x_{i}\in\{a,b,c,d\},$ each $\epsilon_{i}\in\{1,-1\}$
and $|w|=\ell(w)$ the length of the word. For fixed $h=(h_{a},h_{b},h_{c},h_{d})\in S_{n}^{4},$
we can write
\[
\mathrm{Tr}(w(h))=\sum_{i_{j}\in[n]}\left(h_{x_{1}}^{\epsilon_{1}}\right)_{i_{1}i_{2}}\left(h_{x_{2}}^{\epsilon_{2}}\right)_{i_{2}i_{3}}\dots\left(h_{x_{|w|}}^{\epsilon_{|w|}}\right)_{i_{|w|}i_{1}}.
\]
 Now assume $\lambda\vdash k$ with $k\ge0.$ If $k=0,$ then $\chi^{\lambda^{+}(n)}(g)=1$
for all $g\in S_{n}$ (since $V^{\lambda^{+}(n)}$ is the trivial
representation) and in this case the conclusion of Theorem \ref{thm: bound on integral main}
follows from e.g. \cite{Nica,LinialPuder}. So we will assume $k\ge1$
and fix some $\lambda\vdash k.$ Now let $\{v_{p}\}$ be an orthonormal
basis for the subspace $\left(V^{\lambda^{+}(n)}\right)^{\oplus d_{\lambda}}$
inside $\cnk.$ We have 
\[
\begin{aligned}\chi^{\lambda^{+}(n)}\left(R_{2}(h)\right)= & \frac{1}{d_{\lambda}}\sum_{p_{i}}\left\langle h_{a}v_{p_{2}},v_{p_{1}}\right\rangle \left\langle h_{b}v_{p_{3}},v_{p_{2}}\right\rangle \left\langle h_{a}^{-1}v_{p_{4}},v_{3}\right\rangle \left\langle h_{b}^{-1}v_{p_{5}},v_{p_{4}}\right\rangle \\
 & \times\left\langle h_{c}v_{p_{6}},v_{p_{5}}\right\rangle \left\langle h_{d}v_{p_{7}},v_{p_{6}}\right\rangle \left\langle h_{c}^{-1}v_{p_{8}},v_{p_{7}}\right\rangle \left\langle h_{d}^{-1}v_{p_{1}},v_{p_{8}}\right\rangle .
\end{aligned}
\]
Writing each 
\[
v_{p}=\sum_{I}\beta_{p,I}e_{I},
\]
we can rewrite this as 
\[
\begin{aligned} & \chi^{\lambda^{+}(n)}\left(R_{2}(h)\right)\\
= & \frac{1}{d_{\lambda}}\sum_{p_{i}}\sum_{I_{x}^{1},I_{x}^{2},J_{x}^{1},J_{x}^{2}}\left(\beta_{p_{2},I_{a}^{1}}\right)\left(\bar{\beta}_{p_{1},J_{a}^{1}}\right)\left(\beta_{p_{3},I_{b}^{1}}\right)\left(\bar{\beta}_{p_{2},J_{b}^{1}}\right)\left(\beta_{p_{4},J_{b}^{2}}\right)\left(\bar{\beta}_{p_{3},I_{a}^{2}}\right)\left(\beta_{p_{5},J_{a}^{2}}\right)\left(\bar{\beta}_{p_{4},I_{a}^{2}}\right)\\
 & \times\left(\beta_{p_{6},I_{c}^{1}}\right)\left(\bar{\beta}_{p_{5},J_{c}^{1}}\right)\left(\beta_{p_{7},I_{d}^{1}}\right)\left(\bar{\beta}_{p_{6},J_{d}^{1}}\right)\left(\beta_{p_{8},J_{c}^{2}}\right)\left(\bar{\beta}_{p_{7},I_{c}^{2}}\right)\left(\beta_{p_{1},J_{d}^{2}}\right)\left(\bar{\beta}_{p_{8},I_{d}^{2}}\right)\left\langle h_{a}e_{J_{a}^{1}},e_{I_{a}^{1}}\right\rangle \left\langle h_{b}e_{J_{b}^{1}},e_{I_{b}^{1}}\right\rangle \\
 & \times\left\langle h_{a}^{-1}e_{I_{a}^{2}},e_{J_{a}^{2}}\right\rangle \left\langle h_{b}^{-1}e_{I_{b}^{2}},e_{J_{b}^{2}}\right\rangle \left\langle h_{c}e_{J_{c}^{1}},e_{I_{c}^{1}}\right\rangle \left\langle h_{d}e_{J_{d}^{1}},e_{I_{d}^{1}}\right\rangle \left\langle h_{c}^{-1}e_{I_{c}^{2}},e_{J_{c}^{2}}\right\rangle \left\langle h_{d}^{-1}e_{I_{d}^{2}},e_{J_{d}^{2}}\right\rangle .
\end{aligned}
\]

\begin{rem}
\label{rem:distinctness remark}Since $v_{p}\in\left(V^{\lambda^{+}(n)}\right)^{\oplus d_{\lambda}}\subseteq\A,$
we know that $\beta_{p,I}=0$ whenever $I$ is a multi--index with
two indices equal. Thus, we should only consider our sum as being
over multi--indices with all indices distinct. 
\end{rem}

Then, we rewrite the inner product terms as a product of matrix coefficients.
For example, 
\[
\left\langle h_{a}e_{J_{a}^{1}},e_{I_{a}^{1}}\right\rangle \left\langle h_{a}^{-1}e_{I_{a}^{2}},e_{J_{a}^{2}}\right\rangle =\prod_{i=1}^{k}\left(h_{a}\right)_{\left(J_{a}^{1}\right)_{i},\left(I_{a}^{1}\right)_{i}}\left(h_{a}\right)_{\left(J_{a}^{2}\right)_{i},\left(I_{a}^{2}\right)_{i}}
\]
and similarly for $b,c$ and $d$. Denote by 
\[
\begin{aligned}\prod\beta\overset{\mathrm{def}}{=} & \left(\beta_{p_{2},I_{a}^{1}}\right)\left(\bar{\beta}_{p_{1},J_{a}^{1}}\right)\left(\beta_{p_{3},I_{b}^{1}}\right)\left(\bar{\beta}_{p_{2},J_{b}^{1}}\right)\left(\beta_{p_{4},J_{b}^{2}}\right)\left(\bar{\beta}_{p_{3},I_{a}^{2}}\right)\left(\beta_{p_{5},J_{a}^{2}}\right)\left(\bar{\beta}_{p_{4},I_{a}^{2}}\right)\\
 & \times\left(\beta_{p_{6},I_{c}^{1}}\right)\left(\bar{\beta}_{p_{5},J_{c}^{1}}\right)\left(\beta_{p_{7},I_{d}^{1}}\right)\left(\bar{\beta}_{p_{6},J_{d}^{1}}\right)\left(\beta_{p_{8},J_{c}^{2}}\right)\left(\bar{\beta}_{p_{7},I_{c}^{2}}\right)\left(\beta_{p_{1},J_{d}^{2}}\right)\left(\bar{\beta}_{p_{8},I_{d}^{2}}\right).
\end{aligned}
\]
We now have the following expression, for each \emph{fixed }$h=(h_{a},h_{b},h_{c},h_{d})\in S_{n}^{4},$
\[
\begin{aligned} & \chi^{\lambda^{+}(n)}\left(R_{g}(h)\right)\mathrm{Tr}\left(w(h)\right)\\
= & \frac{1}{d_{\lambda}}\sum_{p_{i}}\sum_{I_{x}^{1},I_{x}^{2},J_{x}^{1},J_{x}^{2}}\sum_{i_{j}}\left(\prod\beta\right)\prod_{x\in\{a,b,c,d\}}\left[\prod_{i=1}^{k}\left(h_{x}\right)_{\left(J_{x}^{1}\right)_{i},\left(I_{x}^{1}\right)_{i}}\left(h_{x}\right)_{\left(J_{x}^{2}\right)_{i},\left(I_{x}^{2}\right)_{i}}\right]\left(h_{x_{1}}^{\epsilon_{1}}\right)_{i_{1}i_{2}}\left(h_{x_{2}}^{\epsilon_{2}}\right)_{i_{2}i_{3}}\dots\left(h_{x_{|w|}}^{\epsilon_{|w|}}\right)_{i_{|w|}i_{1}}.
\end{aligned}
\]
We then take the expectation over $S_{n}^{4}.$ For each $x\in\{a,b,c,d\},$
denote by $w_{x}^{+}$ the set of all $i$ for which $x_{i}=x$ and
$\epsilon_{i}=1$ in the expression (\ref{eq: writing w=00003Dx_1=00005Cdotsx_|w|})
for $w$. Similarly, denote by $w_{x}^{-}$ the set of all $i$ for
which $x_{i}=x$ and $\epsilon_{i}=-1.$ Then, for each $x$, we compute
the expectation
\begin{equation}
\int_{S_{n}}\prod_{i=1}^{k}\left(h_{x}\right)_{\left(J_{x}^{1}\right)_{i},\left(I_{x}^{1}\right)_{i}}\left(h_{x}\right)_{\left(J_{x}^{2}\right)_{i},\left(I_{x}^{2}\right)_{i}}\prod_{j\in w_{x}^{+}}\left(h_{x}\right)_{j,j+1}\prod_{\ell\in w_{x}^{-}}\left(h_{x}\right)_{\ell+1,\ell}dh_{x}.\label{eq: integral to bound with weingarten}
\end{equation}
Here, one should actually interpret the $j$ and $\ell$ subscripts
as modulo $|w|,$ so that if either $j$ or $\ell$ are equal to $|w|,$
then $j+1$ or $\ell+1$ are equal to $1$. The integral above can
be computed using the Weingarten calculus. Indeed, using Theorem \ref{thm: Weingarten theorem},
we have that (\ref{eq: integral to bound with weingarten}) is equal
to 
\[
\begin{aligned}\sum_{\sigma_{x},\tau_{x}\in\mathrm{Part}\left(2k+|w_{x}^{+}|+|w_{x}^{-}|\right)} & \sigma_{x}\left(J_{x}^{1}\sqcup J_{x}^{2}\sqcup_{j\in w_{x}^{+}}j\sqcup_{\ell\in w_{x}^{-}}(\ell+1)\right)\tau_{x}\left(I_{x}^{1}\sqcup I_{x}^{2}\sqcup_{j\in w_{x}^{+}}(j+1)\sqcup_{\ell\in w_{x}^{-}}\ell\right)\\
\times & \mathrm{Wg}_{n,\left(2k+|w_{x}^{+}|+|w_{x}^{-}|\right)}\left(\sigma_{x},\tau_{x}\right).
\end{aligned}
\]
One should view the set partitions $\sigma_{x},\tau_{x}$ as partition
diagrams consisting of vertices (one for each vertex label), labeled
from $1$ to $2k+|w_{x}^{+}|+|w_{x}^{-}|$. Then for example $\sigma_{x}\left(J_{x}^{1}\sqcup J_{x}^{2}\sqcup_{j\in w_{x}^{+}}j\sqcup_{\ell\in w_{x}^{-}}(\ell+1)\right)=1$,
whenever two vertices being connected in the partition diagram for
$\sigma_{x}$ implies the corresponding indices are equal. 

Using this, we write
\[
\begin{aligned} & \integral\\
= & \frac{1}{d_{\lambda}}\sum_{p_{i}}\sum_{I_{x}^{1},I_{x}^{2},J_{x}^{1},J_{x}^{2}}\sum_{i_{j}}\left(\prod\beta\right)\prod_{x\in\{a,b,c,d\}}\\
\times\Bigg( & \sum_{\sigma_{x},\tau_{x}\in\mathrm{Part}\left(2k+|w_{x}^{+}|+|w_{x}^{-}|\right)}\sigma_{x}\left(J_{x}^{1}\sqcup J_{x}^{2}\sqcup_{j\in w_{x}^{+}}j\sqcup_{\ell\in w_{x}^{-}}(\ell+1)\right)\tau_{x}\left(I_{x}^{1}\sqcup I_{x}^{2}\sqcup_{j\in w_{x}^{+}}(j+1)\sqcup_{\ell\in w_{x}^{-}}\ell\right)\\
\times & \mathrm{Wg}_{n,\left(2k+|w_{x}^{+}|+|w_{x}^{-}|\right)}\left(\sigma_{x},\tau_{x}\right)\Bigg)
\end{aligned}
\]

We then have the following lemma. 
\begin{lem}
\label{lem: distinctness of vertices}For any $x,$ if any two elements
from $\{1,\dots,k\}$ or $\{k+1,\dots,2k\}$ are in the same block
of $\sigma_{x},$ respectively $\tau_{x}$, then
\[
\sigma_{x}\left(J_{x}^{1}\sqcup J_{x}^{2}\sqcup_{j\in w_{x}^{+}}j\sqcup_{\ell\in w_{x}^{-}}(\ell+1)\right)=0,
\]
respectively, $\tau_{x}\left(I_{x}^{1}\sqcup I_{x}^{2}\sqcup_{j\in w_{x}^{+}}(j+1)\sqcup_{\ell\in w_{x}^{-}}\ell\right)=0.$
\end{lem}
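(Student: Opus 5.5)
The lemma should follow from Remark \ref{rem:distinctness remark} together with the definition of $\sigma_x(\cdot)$ and $\tau_x(\cdot)$ in Theorem \ref{thm: Weingarten theorem}. There is a subtlety here: as literally stated, the evaluation $\sigma_x(J)$ is a $0$--$1$ function of the indices. It vanishes only if the indices fail to respect $\sigma_x$. So the claim should be read as holding whenever the term carries a nonzero weight from $\prod\beta$, that is, within the sum over multi-indices restricted by Remark \ref{rem:distinctness remark} to those with all entries distinct. My plan is to make this restriction explicit and then argue block by block.

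First, the positions of the concatenated multi-index $J_x^1\sqcup J_x^2\sqcup\cdots$ labelled $1,\dots,k$ are exactly the entries of $J_x^1$, and those labelled $k+1,\dots,2k$ are the entries of $J_x^2$. The same holds for $\tau_x$, with $I_x^1$ and $I_x^2$ in these positions.

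Next, each of $J_x^1, J_x^2, I_x^1, I_x^2$ occurs as a subscript of some $\beta_{p,\cdot}$ or $\bar\beta_{p,\cdot}$ in $\prod\beta$. For instance, $J_a^1$ appears in $\bar\beta_{p_1,J_a^1}$ and $J_a^2$ in $\beta_{p_5,J_a^2}$. I would check this by going through the list for each letter $x\in\{a,b,c,d\}$. Since every $v_p$ lies in $\A$, which is contained in the span of $e_I$ with distinct indices, Remark \ref{rem:distinctness remark} shows that $\prod\beta=0$ unless each of these four multi-indices has pairwise distinct entries. So we may sum only over such multi-indices.

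Now suppose $j\neq l\in\{1,\dots,k\}$ lie in the same block of $\sigma_x$. By definition, $\sigma_x(J)=1$ requires $(J_x^1)_j=(J_x^1)_l$, which contradicts distinctness. Hence $\sigma_x(\cdots)=0$ for every surviving term. The case $\{k+1,\dots,2k\}$ is identical using $J_x^2$, and the $\tau_x$ statements follow in the same way using $I_x^1$ and $I_x^2$.

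The only real obstacle is bookkeeping. We have to verify that the ordering convention in the Weingarten expansion places $J_x^1$ and $J_x^2$ (respectively $I_x^1$ and $I_x^2$) in positions $1,\dots,2k$. We also have to confirm that every one of these multi-indices is genuinely constrained by some $\beta$ factor. This includes the slightly garbled labels in the displayed $\prod\beta$, for example $I_a^2$ appearing twice where $I_b^2$ is presumably intended.
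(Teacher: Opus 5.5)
Your proposal is correct and follows the paper's own argument: positions $1,\dots,k$ (resp.\ $k+1,\dots,2k$) carry the entries of $J_x^1$ (resp.\ $J_x^2$), and by Remark~\ref{rem:distinctness remark} these entries are pairwise distinct on every term with $\prod\beta\neq0$, so $\sigma_x$ (and likewise $\tau_x$) cannot put two of them in one block. Your point that the lemma must be read on the support of $\prod\beta$ is exactly the reading the paper's one-line proof assumes when it says equal indices are ``not possible by Remark~\ref{rem:distinctness remark}''.
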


\begin{proof}
Suppose $\alpha,\beta\in\{1,\dots,k\}$ and $\alpha\sim\beta$ in
$\sigma_{x}$. Then, 
\[
\sigma_{x}\left(J_{x}^{1}\sqcup J_{x}^{2}\sqcup_{j\in w_{x}^{+}}j\sqcup_{\ell\in w_{x}^{-}}(\ell+1)\right)=0
\]
 unless $(J_{x}^{1})_{\alpha}=(J_{x}^{1})_{\beta}$. But this is not
possible by Remark \ref{rem:distinctness remark}. This obviously
extends to $k+\alpha,k+\beta\in\{k+1,\dots,2k\}$ and also to $\tau_{x}.$ 
\end{proof}
This is the first stage at which our construction of the space $\A$
allows us to see some cancellations, since we use the observation
that $\left(V^{\lambda^{+}(n)}\right)^{\oplus d_{\lambda}}\subseteq\left\langle e_{i_{1}}\otimes\dots\otimes e_{i_{k}}:\ i_{1},\dots,i_{k}\ \mathrm{all\ distinct}\right\rangle .$
To capture more cancellations, it is then sensible to use the other
condition on the subspace $\A,$ namely that 
\[
\A\subseteq\bigcap_{j=1}^{k}\ker\left(T_{j}\right).
\]
The presence of the vertices in $\sigma_{x}$ and $\tau_{x}$ that
correspond to indices coming from $\mathrm{Tr}\left(w(h)\right)$
complicates proceedings slightly as compared to \cite[Lemma 4.4]{Cassidy2},
but the refinement needed is described in the following lemma. Given
a collection of partitions $\sigma_{x},\tau_{x}$ for $x\in\{a,b,c,d\}$,
their meet
\[
\bigsqcup_{x}\sigma_{x}\sqcup\tau_{x}
\]
 defines a set partition of the set of index labels 
\[
\big\{\left(J_{x}^{j}\right)_{i},\left(I_{x}^{j}\right)_{i},i_{1},\dots,i_{|w|}:\ x\in\{a,b,c,d\},j=1,2,i\in[k]\big\}.
\]
We denote this partition $\mathcal{P}(\sigma_{x},\tau_{x}).$\footnote{This is almost equivalent to the partition of the vertices in the
graph expansion described in \ref{subsec:Construction-of} -- it
is missing only the additional $\pi_{i}$ identifications, which are
yet to be introduced. }
\begin{lem}
\label{lem:NO-SINGLETONS}For any fixed collection of $\sigma_{x},\tau_{x}$
for $x\in\{a,b,c,d\},$ if a vertex label is a singleton in $\mathcal{P}(\sigma_{x},\tau_{x})$,
then 
\[
\begin{aligned} & \frac{1}{d_{\lambda}}\sum_{p_{i}}\sum_{I_{x}^{1},I_{x}^{2},J_{x}^{1},J_{x}^{2}}\sum_{i_{j}}\left(\prod\beta\right)\prod_{x\in\{a,b,c,d\}}\\
\times & \sigma_{x}\left(J_{x}^{1}\sqcup J_{x}^{2}\sqcup_{j\in w_{x}^{+}}j\sqcup_{\ell\in w_{x}^{-}}(\ell+1)\right)\tau_{x}\left(I_{x}^{1}\sqcup I_{x}^{2}\sqcup_{j\in w_{x}^{+}}(j+1)\sqcup_{\ell\in w_{x}^{-}}\ell\right)\\
\times & \mathrm{Wg}_{n,\left(2k+|w_{x}^{+}|+|w_{x}^{-}|\right)}\left(\sigma_{x},\tau_{x}\right)=0.
\end{aligned}
\]
\end{lem}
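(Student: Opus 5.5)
The plan is to show that a singleton label lets us sum freely over one tensor coordinate of a single vector $v_{p}$, which produces a contraction $T_{j}(v_{p})$; this vanishes because $v_{p}\in\A\subseteq\bigcap_{j}\ker(T_{j})$. First I will identify which labels can be singletons. Each word index $i_{j}$ occurs in two matrix coefficients, namely $\left(h_{x_{j-1}}^{\epsilon_{j-1}}\right)_{i_{j-1}i_{j}}$ and $\left(h_{x_{j}}^{\epsilon_{j}}\right)_{i_{j}i_{j+1}}$, so it is recorded in two of the partitions $\sigma_{x},\tau_{x}$. The relevant singleton labels are therefore the tensor labels $\left(I_{x}^{r}\right)_{i}$ or $\left(J_{x}^{r}\right)_{i}$. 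Each such label occurs in exactly one partition ($\tau_{x}$ or $\sigma_{x}$) and in exactly one $\beta$-factor. I will treat the case of a label $\left(I_{x}^{r}\right)_{i}$ forming a singleton block; the case of a $J$-label is identical with $\sigma_{x}$ in place of $\tau_{x}$ and $\bar{\beta}$ in place of $\beta$.

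Next, I fix every summation variable except the value $m\in[n]$ of this one coordinate: the $p_{i}$, all other tensor indices, and all $i_{j}$. Then I check which factors of the summand depend on $m$. The Weingarten factors $\mathrm{Wg}_{n,\cdot}(\sigma_{x},\tau_{x})$ depend only on the partitions, not on the indices. Because the label is a singleton block of $\tau_{x}$, the indicator $\tau_{x}(\cdots)$ imposes no equality involving it, so it does not depend on $m$. Likewise no $\sigma_{y}$ or $\tau_{y}$ with $y\neq x$ involves it. Finally, of the sixteen factors in $\prod\beta$, exactly one involves this coordinate, say $\beta_{p,I}$ (for instance $\beta_{p_{2},I_{a}^{1}}$ when $x=a$, $r=1$). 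The constraint of Remark \ref{rem:distinctness remark} is not imposed as an extra restriction on the sum; it is automatically encoded by $\beta$ vanishing. So the sum over $m$ runs over all of $[n]$.

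Summing over $m$ then isolates the quantity $\sum_{m}\beta_{p,(\dots,m,\dots)}$, with $m$ in the $i$-th slot. This is precisely the coefficient of $e_{I\setminus i}$ in $T_{i}(v_{p})$, which is $0$ since $v_{p}\in\left(V^{\lambda^{+}(n)}\right)^{\oplus d_{\lambda}}\subseteq\A\subseteq\ker(T_{i})$. Hence, for every fixed value of the remaining variables, the inner sum vanishes, and therefore the whole expression vanishes.

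The main obstacle is the bookkeeping step of verifying that no other factor secretly depends on $m$. Two points need care. The first is that the indicator functions constrain only pairs of labels lying in a common block, so a singleton imposes nothing. The second is that the index sets $I_{x}^{1},I_{x}^{2},J_{x}^{1},J_{x}^{2}$ in $\prod\beta$ are distinct summation variables, even though the displayed formula contains typos such as $I_{a}^{2}$ appearing twice. I would make this precise by checking, for each label type, the unique $\beta$- or $\bar{\beta}$-factor it enters, and by noting that complex conjugation does not affect the vanishing of $T_{i}(v_{p})$.
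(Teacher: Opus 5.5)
Your proposal is correct and is the same argument as the paper's: with everything fixed except the singleton tensor coordinate, only one $\beta$ or $\bar{\beta}$ factor depends on it, and the free sum over $[n]$ is a coefficient of $T_{i}(v_{p})$, which vanishes because $v_{p}\in A_{k}(n)$. One small correction to your framing: a word index $i_{j}$ being recorded in two partitions does not stop it from being a singleton block of $\mathcal{P}(\sigma_{x},\tau_{x})$ (this happens when both of its positions are singletons, and then summing over $i_{j}$ gives a factor $n$ rather than $0$), so the lemma genuinely holds only for tensor labels, which is the same implicit restriction the paper makes when it takes $(J_{a}^{1})_{1}$ without loss of generality.
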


\begin{proof}
WLOG, assume that the label $\left(J_{a}^{1}\right)_{1}$ is a singleton
in $\mathcal{P}(\sigma_{x},\tau_{x}).$ Then, for all other variables
fixed except for $\left(J_{a}^{1}\right)_{1}$ , the contribution
to the above sum is
\[
\sum_{\left(J_{a}^{1}\right)=1}^{n}\left(\bar{\beta}_{p_{1},J_{a}^{1}}\right),
\]
multiplied by some constant coming from the other fixed terms. This
is exactly the (complex conjugate of) the coefficient of $e_{\left(J_{a}^{1}\right)_{2}}\otimes\dots\otimes e_{\left(J_{a}^{1}\right)_{k}}$
in $T_{1}(v_{p_{1}}).$ Since $v_{p_{1}}$ belongs to an orthonormal
basis for $\mathcal{U}_{\lambda^{+}(n)},$ we have that $v_{p_{1}}\in\ker\left(T_{1}\right)$,
so that $\sum_{\left(J_{a}^{1}\right)=1}^{n}\left(\bar{\beta}_{p_{1},J_{a}^{1}}\right)=0.$ 
\end{proof}
We will say that $\mathcal{P}(\sigma_{x},\tau_{x})$ is singleton
free if no vertex label is a singleton, and write 
\[
\sum_{\sigma_{x},\tau_{x}}^{*}
\]
to indicate that we are summing over singleton free collections of
partitions. Now we can use our projection formula to further simplify
our expression. We rewrite 
\[
\begin{aligned}\sum_{p_{2}}\left(\beta_{p_{2},I_{a}^{1}}\right)\left(\bar{\beta}_{p_{2},J_{b}^{1}}\right) & =\sum_{p_{2}}\left\langle e_{I_{a}^{1}},v_{p_{2}}\right\rangle \left\langle v_{p_{2}},e_{J_{b}^{1}}\right\rangle \\
 & =\left\langle \mathcal{Q}_{\lambda,n}\left(e_{I_{a}^{1}}\right),e_{J_{b}^{2}}\right\rangle \\
 & =\sum_{\pi}^{\le S_{k}}c(n,k,\lambda,\pi)\left\langle P_{\pi}e_{I_{a}^{1}},e_{J_{b}^{2}}\right\rangle .
\end{aligned}
\]
We repeat this with $p_{1},\dots,p_{8}$ and the corresponding $\beta$
products. For each fixed collection of multi--indices $I_{a}^{1},I_{a}^{2},\dots,J_{d}^{1},J_{d}^{2},$
we can thus rewrite 
\[
\begin{aligned} & \sum_{p_{i}}\left(\prod\beta\right)\\
= & \sum_{\pi_{1},\dots,\pi_{8}}^{\le S_{k}}\left(\prod_{i=1}^{8}c(n,k,\lambda,\pi_{i})\right)\left\langle P_{\pi_{1}}e_{J_{d}^{2}},e_{J_{a}^{1}}\right\rangle \left\langle P_{\pi_{2}}e_{I_{a}^{1}},e_{J_{b}^{1}}\right\rangle \left\langle P_{\pi_{3}}e_{I_{b}^{1}},e_{I_{a}^{2}}\right\rangle \\
 & \times\left\langle P_{\pi_{4}}e_{J_{a}^{2}},e_{I_{b}^{1}}\right\rangle \left\langle P_{\pi_{5}}e_{J_{b}^{2}},e_{J_{c}^{1}}\right\rangle \left\langle P_{\pi_{6}}e_{I_{c}^{1}},e_{J_{d}^{1}}\right\rangle \left\langle P_{\pi_{7}}e_{I_{d}^{1}},e_{I_{c}^{2}}\right\rangle \left\langle P_{\pi_{8}}e_{J_{c}^{2}},e_{I_{d}^{2}}\right\rangle .
\end{aligned}
\]
Putting this all together gives 
\[
\begin{aligned}\integral & =\frac{1}{d_{\lambda}}\sum_{\sigma_{x},\tau_{x}}^{*}\sum_{\pi_{1},\dots,\pi_{8}}^{\le S_{k}}\sum_{I_{x}^{1},I_{x}^{2},J_{x}^{1},J_{x}^{2}}\left(\prod_{i=1}^{8}c(n,k,\lambda,\pi_{i})\right)\left(\prod_{x\in\{a,b,c,d\}}\mathrm{Wg}_{n,\left(2k+|w_{x}^{+}|+|w_{x}^{-}|\right)}\left(\sigma_{x},\tau_{x}\right)\right)\\
 & \times\left(\prod_{x\in\{a,b,c,d\}}\sigma_{x}\left(J_{x}^{1}\sqcup J_{x}^{2}\sqcup_{j\in w_{x}^{+}}j\sqcup_{\ell\in w_{x}^{-}}(\ell+1)\right)\tau_{x}\left(I_{x}^{1}\sqcup I_{x}^{2}\sqcup_{j\in w_{x}^{+}}(j+1)\sqcup_{\ell\in w_{x}^{-}}\ell\right)\right)\\
 & \times\left\langle P_{\pi_{1}}e_{J_{d}^{2}},e_{J_{a}^{1}}\right\rangle \left\langle P_{\pi_{2}}e_{I_{a}^{1}},e_{J_{b}^{1}}\right\rangle \left\langle P_{\pi_{3}}e_{I_{b}^{1}},e_{I_{a}^{2}}\right\rangle \left\langle P_{\pi_{4}}e_{J_{a}^{2}},e_{I_{b}^{1}}\right\rangle \left\langle P_{\pi_{5}}e_{J_{b}^{2}},e_{J_{c}^{1}}\right\rangle \\
 & \times\left\langle P_{\pi_{6}}e_{I_{c}^{1}},e_{J_{d}^{1}}\right\rangle \left\langle P_{\pi_{7}}e_{I_{d}^{1}},e_{I_{c}^{2}}\right\rangle \left\langle P_{\pi_{8}}e_{J_{c}^{2}},e_{I_{d}^{2}}\right\rangle .
\end{aligned}
\]
This formula asserts that we can evaluate $\integral$ by summing
over some collection of set partitions and then counting how many
multi--indices and indices satisfy the constraints imposed. Just
as in \cite[Definition 3.5]{Magee2021} and similar to \cite[Definition 4.5]{Cassidy2},
we formalize each collection of $\sigma_{x},\tau_{x}$ and $\pi_{i}$
as a matching datum. 
\begin{defn}
A matching datum for $(w,k)$ is a collection of set partitions $\sigma_{x},\tau_{x}\in\mathrm{Part}\left(2k+|w_{x}^{+}|+|w_{x}^{-}|\right)$
for each $x\in\{a,b,c,d\}$ with $\mathcal{P}(\sigma_{x},\tau_{x})$
singleton free, together with a collection of set partitions $\pi_{1},\dots,\pi_{8}\in\skpart.$
We will write 
\[
\mathrm{Match}\left(w,k\right)
\]
for the (finite) collection of these data and we will write 
\[
\mathcal{N}\left(\sigma_{x},\tau_{x},\pi_{i}\right)
\]
for the total number of multi--indices and indices, $I_{a}^{1},I_{a}^{2},\dots,J_{d}^{1},J_{d}^{2},i_{1},\dots,i_{|w|}$,
satisfying 
\end{defn}

\begin{itemize}
\item $\sigma_{x}^{\mathrm{weak}}\left(J_{x}^{1}\sqcup J_{x}^{2}\sqcup_{j\in w_{x}^{+}}j\sqcup_{\ell\in w_{x}^{-}}(\ell+1)\right)=1$
for each $x\in\{a,b,c,d\}$,
\item $\tau_{x}^{\mathrm{weak}}\left(I_{x}^{1}\sqcup I_{x}^{2}\sqcup_{j\in w_{x}^{+}}(j+1)\sqcup_{\ell\in w_{x}^{-}}\ell\right)=1$
for each $x\in\{a,b,c,d\}$ and 
\item $\left\langle P_{\pi_{1}}e_{J_{d}^{2}},e_{J_{a}^{1}}\right\rangle =\left\langle P_{\pi_{2}}e_{I_{a}^{1}},e_{J_{b}^{1}}\right\rangle =\left\langle P_{\pi_{3}}e_{I_{b}^{1}},e_{I_{a}^{2}}\right\rangle =\left\langle P_{\pi_{4}}e_{J_{a}^{2}},e_{I_{b}^{1}}\right\rangle =\left\langle P_{\pi_{5}}e_{J_{b}^{2}},e_{J_{c}^{1}}\right\rangle =\left\langle P_{\pi_{6}}e_{I_{c}^{1}},e_{J_{d}^{1}}\right\rangle =\left\langle P_{\pi_{7}}e_{I_{d}^{1}},e_{I_{c}^{2}}\right\rangle =\left\langle P_{\pi_{8}}e_{J_{c}^{2}},e_{I_{d}^{2}}\right\rangle =1.$ 
\end{itemize}
In this notation, we have the following theorem giving a combinatorial
expression for $\integral.$ 
\begin{thm}
\label{thm: combinatorial expression for itnegral}For any $k\ge1$
and any $\lambda\vdash k$ and $w\in F_{4}$ with $w\neq e,$ we have
\[
\integral=\frac{1}{d_{\lambda}}\sum_{(\sigma_{x},\tau_{x},\pi_{i})\in\mathrm{Match}\left(w,k\right)}\left(\prod_{i=1}^{8}c(n,k,\lambda,\pi_{i})\right)\left(\prod_{x\in\{a,b,c,d\}}\mathrm{Wg}_{n,\left(2k+|w_{x}^{+}|+|w_{x}^{-}|\right)}\left(\sigma_{x},\tau_{x}\right)\right)\mathcal{N}\left(\sigma_{x},\tau_{x},\pi_{i}\right).
\]
\end{thm}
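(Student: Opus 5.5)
The plan is to assemble the formula from the identities already derived. There is one genuine subtlety: the singleton reduction of Lemma \ref{lem:NO-SINGLETONS} and the replacement of $\sum_{p_i}\prod\beta$ by the projection formula must be done in the right order.

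First I expand the integrand for fixed $h$ as the multiple sum over $p_i$, over the multi-indices $I_x^j,J_x^j$, and over $i_1,\dots,i_{|w|}$, exactly as displayed before the lemma. Then I integrate coordinatewise over $S_n^4$. Since $h_a,h_b,h_c,h_d$ are independent, the expectation of the product of matrix coefficients factorises over $x\in\{a,b,c,d\}$. Each factor (\ref{eq: integral to bound with weingarten}) is evaluated by Theorem \ref{thm: Weingarten theorem}, which gives the sum over $\sigma_x,\tau_x$ with weights $\mathrm{Wg}_{n,2k+|w_x^+|+|w_x^-|}(\sigma_x,\tau_x)$. All sums are finite, so I can interchange them freely and make the sum over $(\sigma_x,\tau_x)$ outermost.

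For each fixed collection $(\sigma_x,\tau_x)$ whose partition $\mathcal{P}(\sigma_x,\tau_x)$ has a singleton, the entire inner sum (over $p_i$, indices and multi-indices) vanishes by Lemma \ref{lem:NO-SINGLETONS}. Only singleton-free collections survive, which is the restriction built into $\mathrm{Match}(w,k)$.

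For the surviving collections, I fix all multi-indices and perform the sum over each $p_i$ separately. Each $p_i$ occurs in exactly one $\beta$ and one $\bar\beta$ factor, so $\sum_{p}\beta_{p,I}\bar\beta_{p,J}=\langle\mathcal{Q}_{\lambda,n}e_I,e_J\rangle$ because $\{v_p\}$ is an orthonormal basis of the image of $\mathcal{Q}_{\lambda,n}$. Proposition \ref{prop: projection formula} then expands this as $\sum^{\le S_k}_{\pi_i}c(n,k,\lambda,\pi_i)\langle P_{\pi_i}e_I,e_J\rangle$. Doing this for $i=1,\dots,8$ yields the displayed formula with the sums over $\pi_1,\dots,\pi_8$.

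Next I move the sums over $\pi_i$ outside, together with $\sigma_x,\tau_x$. The coefficients $c(\cdot)$ and $\mathrm{Wg}(\cdot)$ do not depend on the indices. What remains inside is the sum over all indices of a product of $0$–$1$ quantities: the $\sigma_x(\cdot)$, $\tau_x(\cdot)$ and $\langle P_{\pi_i}e_I,e_J\rangle$. That sum is precisely the count $\mathcal{N}(\sigma_x,\tau_x,\pi_i)$ of index tuples satisfying all the constraints.

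The main obstacle I anticipate is the interaction between the distinctness of Remark \ref{rem:distinctness remark} and the projection step. Before the projection step, the multi-indices are implicitly restricted to distinct entries because $\beta$ vanishes otherwise. After replacing $\sum_p\prod\beta$ by $\langle\mathcal{Q}e_I,e_J\rangle$, this implicit restriction is no longer visible term by term. It is harmless, however, for two reasons. The identity $\sum_p\beta_{p,I}\bar\beta_{p,J}=\langle\mathcal{Q}e_I,e_J\rangle$ holds for all $I,J$, so both sides vanish on non-distinct multi-indices. And the $\langle P_{\pi}e_I,e_J\rangle$ with $\pi\le\iota(\tau)$ only enforce equalities between the two rows, never within a row, so no extra constraint is introduced.

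I must also check that the singleton-free condition was derived before the projection replacement, so the order of the two steps is legitimate. It was: Lemma \ref{lem:NO-SINGLETONS} applies to the sum with the $\beta$'s still present, and we discard whole $(\sigma_x,\tau_x)$-blocks that are identically zero.

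Finally, I make sure the counting convention in $\mathcal{N}$ (the constraints labelled $\sigma_x^{\mathrm{weak}}$, $\tau_x^{\mathrm{weak}}$) matches the "implies equality" convention of Theorem \ref{thm: Weingarten theorem}. With that checked, the claimed expression for $\mathcal{I}_n(w,\lambda,g)$ follows, including the prefactor $1/d_\lambda$.
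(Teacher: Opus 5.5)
Your proposal is correct and is essentially the paper's own derivation. You expand the integrand, factor the expectation over the letters and apply Theorem \ref{thm: Weingarten theorem} to each, discard the $(\sigma_x,\tau_x)$ with singletons via Lemma \ref{lem:NO-SINGLETONS}, replace each $\sum_{p_i}\beta\bar\beta$ using Proposition \ref{prop: projection formula}, and collect the remaining $0$--$1$ constraints into $\mathcal{N}(\sigma_x,\tau_x,\pi_i)$. The one slip is in your aside on distinctness, which is unnecessary because the substitution is an exact identity for every pair $(I,J)$: by its ``iff'' definition, $\langle P_\pi e_I,e_J\rangle$ with $\pi\le\iota(\tau)$ does impose inequalities, in particular that entries within each row are distinct, so the restriction to distinct multi-indices stays visible term by term rather than disappearing.
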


By the observation in \S\ref{subsec:The-Weingarten-calculus}, we
have that 
\[
\mathrm{Wg}_{n,\left(2k+|w_{x}^{+}|+|w_{x}^{-}|\right)}\left(\sigma_{x},\tau_{x}\right)=O\left(n^{-|\sigma_{x}\wedge\tau_{x}|}\right)
\]
and in (\ref{eq: coeff in projection is O(-del)}) we asserted that
\[
c(n,k,\lambda,\pi_{i})=O\left(n^{-\mathrm{del}(\pi_{i})}\right).
\]
We thus deduce the following proposition from Theorem \ref{thm: combinatorial expression for itnegral}. 
\begin{prop}
\label{prop: bound on integral in terms of del, sigma, tau, N}We
have 
\[
\integral\ll_{k,\ell(w)}\sum_{(\sigma_{x},\tau_{x},\pi_{i})\in\mathrm{Match}\left(w,k\right)}n^{-\sum_{i}\mathrm{del}(\pi_{i})}n^{-\sum_{f}|\sigma_{x}\wedge\tau_{x}|}\mathcal{N}\left(\sigma_{x},\tau_{x},\pi_{i}\right).
\]
\end{prop}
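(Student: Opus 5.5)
This bound follows directly from Theorem \ref{thm: combinatorial expression for itnegral}, term by term, with every implied constant depending only on $k$ and $\ell(w)$. The starting point is the exact expression for $\integral$ given there as a finite sum over $\match$. The first step is to note that $\match$ is a finite set whose size depends only on $k$ and $|w|$: each $\sigma_{x},\tau_{x}$ is a set partition of a set of size $2k+|w_{x}^{+}|+|w_{x}^{-}|\le 2k+\ell(w)$, and each $\pi_{i}$ lies in $\skpart$. Consequently, uniform constants in the termwise estimates below are harmless. Since $\mathcal{N}\ge0$, we may apply the triangle inequality and bound each summand in absolute value.

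For a fixed datum $(\sigma_{x},\tau_{x},\pi_{i})$, there are three factors to control. First, $\frac{1}{d_{\lambda}}\le1$ since $d_{\lambda}\ge1$. Second, by (\ref{eq: coeff in projection is O(-del)}) we have $|c(n,k,\lambda,\pi_{i})|\ll_{k}n^{-\mathrm{del}(\pi_{i})}$. Third, by the bound noted after Theorem \ref{thm: Weingarten theorem}, $|\mathrm{Wg}_{n,m}(\sigma_{x},\tau_{x})|\ll_{m}n^{-|\sigma_{x}\wedge\tau_{x}|}$ with $m\le2k+\ell(w)$.

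Of these, only the Weingarten bound needs a word of justification, and I expect it to be the sole point requiring care. In the defining sum $\sum_{\pi\le\sigma\wedge\tau}\mu(\pi,\sigma)\mu(\pi,\tau)/(n)_{|\pi|}$, every $\pi\le\sigma\wedge\tau$ refines $\sigma\wedge\tau$, so $|\pi|\ge|\sigma\wedge\tau|$. For $n\ge2m$ we have $(n)_{|\pi|}\ge(n/2)^{|\pi|}$, so each term is $O_{m}(n^{-|\sigma\wedge\tau|})$. There are finitely many such terms, and the Möbius values depend only on $m$, which gives the claimed bound. The bound for $c$ is handled in the same way: $d_{\lambda^{+}(n)}\le n^{k}$, the character sum is bounded by $k!\,d_{\lambda}$, and $(n)_{|\pi|}\gg n^{|\pi|}$ with $|\pi|=\mathrm{del}(\pi)+k$. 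Both estimates are stated for $n$ large; for the finitely many small $n$ the statement is an asymptotic one and these cases can be absorbed into the constant.

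Multiplying the three bounds gives, for each summand, $\ll_{k,\ell(w)}n^{-\sum_{i}\mathrm{del}(\pi_{i})}n^{-\sum_{x}|\sigma_{x}\wedge\tau_{x}|}\mathcal{N}(\sigma_{x},\tau_{x},\pi_{i})$. The eight $c$ factors and the four (or $2g$) Weingarten factors contribute a product of constants depending only on $k$ and $\ell(w)$. Summing over $\match$ then gives exactly the stated inequality. The factor $\mathcal{N}$ is left untouched, since bounding it is the real content of the later sections.
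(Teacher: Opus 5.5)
Your proposal is correct and takes the same route as the paper. The paper also deduces the proposition directly from Theorem \ref{thm: combinatorial expression for itnegral}, inserting the termwise bounds $\mathrm{Wg}_{n,m}(\sigma_x,\tau_x)=O(n^{-|\sigma_x\wedge\tau_x|})$ and $c(n,k,\lambda,\pi_i)=O(n^{-\mathrm{del}(\pi_i)})$ over the finite set $\mathrm{Match}(w,k)$. Your justifications of these two bounds are sound and make explicit what the paper only asserts: for the first, $\pi\le\sigma\wedge\tau$ forces $|\pi|\ge|\sigma\wedge\tau|$; for the second, $d_{\lambda^{+}(n)}\le n^{k}$ and $|\pi|=\mathrm{del}(\pi)+k$.
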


\subsection{\label{subsec:A-core-graph}A core graph type expansion}

To obtain the bound in Theorem \ref{thm: bound on integral main}
from Proposition \ref{prop: bound on integral in terms of del, sigma, tau, N}
we now describe how one can construct a graph $\gamsig$ from a given
$(\sigma_{x},\tau_{x},\pi_{i})\in\mathrm{Match}\left(w,k\right).$
Such a graph is similar in spirit to a core graph (as in \cite{Stallings})
or more specifically, a quotient of a (not necessarily connected)
core graph corresponding to subgroups of $F_{4}$ generated by powers
of $R_{2}$. The presence of a $w$--cycle and the fact that, in
general, $\sigma_{x}\neq\tau_{x}$ and $\pi_{i}\notin\iota\left(S_{k}\right),$
is the main difference between $\gamsig$ and the usual notion of
a (not necessarily connected) core graph. In any case, an asymptotic
bound on each summand will be encoded in the Euler characteristic
of the graph, 
\[
\chi\left(\gamsig\right)=|V\left(\gamsig\right)|-|E\left(\gamsig\right)|.
\]
Our strategy will then be to bound the maximum $\chi$ of a graph
$\gamsig$ over all $(\sigma_{x},\tau_{x},\pi_{i})\in\match.$

\subsubsection{\label{subsec:Construction-of}Construction of $\protect\gamsig$}

We first construct the $R$--cycles. Here, we again take $R=R_{2}$
for simplicity, but the construction remains the same for any fixed
$g.$ For each $x,$ for each of the $2k$ pairs 
\[
\left(J_{x}^{j}\right)_{i},\left(I_{x}^{j}\right)_{i}
\]
with $i\in[k]$ and $j\in\{1,2\},$ we have two vertices labeled by
$\left(J_{x}^{j}\right)_{i},\left(I_{x}^{j}\right)_{i}$ respectively
and add a directed $x$--labeled edge from $\left(J_{x}^{j}\right)_{i}$
to $\left(I_{x}^{j}\right)_{i}.$ 

Now we construct the $w$--cycle. We define a vertex for each $i_{1},\dots,i_{|w|}$
(with the respective labels) and draw a directed $x_{j}$--labeled
edge from $i_{j}$ and $i_{j+1},$ Unless $j=|w|,$ in which case
we draw an $x_{|w|}$--labeled edge from $i_{|w|}$to $i_{1}.$ This
forms a cycle that we orient to read $w$ as we traverse the edges
clockwise. See Figure \ref{fig: R cycles and w cycles} below. 

\begin{figure}[h]
\centering\begin{tikzpicture}[scale=0.65, baseline=(current bounding box.center)]

\myoctagon{A}
\begin{scope}[shift={(6,0)}] \myoctagon{B} \end{scope}
\begin{scope}[shift={(12,0)}] \myoctagon{C} \end{scope}

\begin{scope}[shift={(18,0)}] \gammaloop \end{scope}

\end{tikzpicture}

\caption{\label{fig: R cycles and w cycles}Here we have $k=3,$ so we have
three $R_{2}$--cycles on the left and the $w$--cycle on the right,
with $w=a^{2}cbc^{-1}b^{-1}.$ We have omitted the vertex labels. }
\end{figure}

\subsubsection*{Adding $\pi_{i}$--arcs}

Now fix some $(\sigma_{x},\tau_{x},\pi_{i})\in\match.$ We will add
arcs to our construction according to the identifications of indices
determined by these partitions. For example, for $\pi_{1},$ we have
\[
\left\langle P_{\pi_{1}}e_{J_{d}^{2}},e_{J_{a}^{1}}\right\rangle =1
\]
if and only if, for $i,j\in[k]$, 
\[
j\sim k+i\ \mathrm{in}\ \pi_{1}\iff\left(J_{d}^{2}\right)_{j}=\left(J_{a}^{1}\right)_{i}.
\]
So we draw a $\pi_{1}$--arc between these vertices. We do this for
every block of $\pi_{1},$ drawing a total of $2k-|\pi_{1}|$ arcs.
We repeat this process with $\pi_{2}$ and $I_{a}^{1},J_{b}^{1},$
$\pi_{3}$ and $I_{b}^{1},I_{a}^{2}$, $\dots,$ $\pi_{8}$ and $J_{c}^{2},I_{d}^{2}.$
In this way, the $\pi_{i}$ determine an equivalence relation on the
vertices in the $R$--cycles, whereby $u\sim v$ iff there is a $\pi_{i}$--arc
between $u$ and $v$. 

\subsubsection*{Adding $\sigma_{x},\tau_{x}$--arcs}

For each $x,$ we have 
\[
\sigma_{x}^{\mathrm{weak}}\left(J_{x}^{1}\sqcup J_{x}^{2}\sqcup_{j\in w_{x}^{+}}i_{j}\sqcup_{\ell\in w_{x}^{-}}i_{(\ell+1)}\right)=1
\]
 if whenever two vertices are connected by an edge in the $\sigma_{x}$
partition diagram, then the corresponding index labels in $J_{x}^{1}\sqcup J_{x}^{2}\sqcup_{j\in w_{x}^{+}}i_{j}\sqcup_{\ell\in w_{x}^{-}}i_{(\ell+1)}$
are equal. Thus, for any edge in the $\sigma_{x}$ diagram, we draw
a $\sigma_{x}$--arc between the corresponding vertices in our cycles.
We do the same for $\tau_{x}.$ In this way, $\sigma_{x}$ determines
an equivalence relation on the set of\emph{ initial }vertices of $x$--labeled
edges in both the $R$--cycles and the $w$--cycle, whereby $u\sim v$
iff there is a $\sigma_{x}$--arc between $u$ and $v$. Similarly,
$\tau_{x}$ determines an equivalence relation on the set of \emph{terminal
}vertices of $x$--labeled edges. 

\subsubsection*{Quotienting by $(\sigma_{x},\tau_{x},\pi_{i})$--arcs}

There is now an equivalence relation on the set of all vertices, 
\[
\big\{\left(J_{x}^{j}\right)_{i},\left(I_{x}^{j}\right)_{i},i_{1},\dots,i_{|w|}:\ x\in\{a,b,c,d\},j=1,2,i\in[k]\big\}
\]
whereby $u\sim v$ iff there are connected by some combination of
$\sigma_{x},\tau_{x}$ or $\pi_{i}$--arcs. Equivalently, this defines
some partition $\mathcal{P}_{(\sigma_{x},\tau_{x},\pi_{i})}$ on the
above set. We then glue the vertices together according to this partition.
That is, any vertices that are in the same block of $\mathcal{P}_{(\sigma_{x},\tau_{x},\pi_{i})}$
are glued together to form one vertex. These form the vertices of
$\gamsig.$ 

Initially, for each $x,$ we have $2k$ copies of an $x$--labeled
edge in the $R$--loops and and additional number of copies in the
$w$--loop. To determine the edges of $\gamsig,$ we fold $x$--labeled
edges together in the following way: if there is a $\sigma_{x}$--arc
between the initial vertices of two $x$--labeled edges \emph{and
}a $\tau_{x}$--arc between their terminal vertices (implying that
their initial/terminal vertices belong to the same block of $\mathcal{P}_{(\sigma_{x},\tau_{x},\pi_{i})}$),
then we fold these two edges in to one $x$--labeled edge in $\gamsig.$
Note that Lemma \ref{lem: distinctness of vertices} determines that
there is never a $\sigma_{x}$--arc from $\left(J_{x}^{j}\right)_{i}$
to $\left(J_{x}^{j}\right)_{\ell}$ and similarly for $\tau_{x}.$
We then have the following lemmas relating the number of vertices
and edges of $\gamsig$ to Proposition \ref{prop: bound on integral in terms of del, sigma, tau, N}.
\begin{lem}
$N(\sigma_{x},\tau_{x},\pi_{i})\le n^{|V(\gamsig)|}.$
\end{lem}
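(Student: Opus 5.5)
The plan is to show that every tuple of indices counted by $\mathcal{N}(\sigma_{x},\tau_{x},\pi_{i})$ is determined by a function from the vertex set of $\gamsig$ to $[n]$. The count is then at most the number of such functions, which is $n^{|V(\gamsig)|}$.

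First, I will recall what $\mathcal{N}$ counts. It is the number of assignments of values in $[n]$ to every index label in
\[
\big\{\left(J_{x}^{j}\right)_{i},\left(I_{x}^{j}\right)_{i},i_{1},\dots,i_{|w|}\big\}
\]
satisfying three conditions:
\begin{itemize}
\item the weak conditions $\sigma_{x}^{\mathrm{weak}}(\cdots)=1$;
\item the weak conditions $\tau_{x}^{\mathrm{weak}}(\cdots)=1$;
\item the conditions $\langle P_{\pi_{i}}e_{\cdot},e_{\cdot}\rangle=1$.
\end{itemize}
Each $\sigma_{x}$-arc and each $\tau_{x}$-arc forces equality of the two indices it joins. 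By the definition of $P_{\pi}$, each $\pi_{i}$-arc also forces equality of its endpoints, since $j\sim k+i$ in $\pi_{1}$ implies $(J_{d}^{2})_{j}=(J_{a}^{1})_{i}$, and similarly for the other $\pi_{i}$.

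The $P_{\pi}$ condition is actually an ``if and only if'': it also imposes distinctness between indices in different blocks. That only shrinks the set being counted, so I will simply discard those constraints, which is harmless for an upper bound.

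Next, by transitivity, any valid assignment is constant on each block of the partition $\mathcal{P}_{(\sigma_{x},\tau_{x},\pi_{i})}$ generated by all the arcs. By construction, these blocks are exactly the vertices of $\gamsig$. So every valid assignment factors through the quotient map from the set of index labels onto $V(\gamsig)$. This gives an injection from the set counted by $\mathcal{N}$ into the set of maps $V(\gamsig)\to[n]$, and hence $\mathcal{N}\le n^{|V(\gamsig)|}$.

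There is no real obstacle here. The only point needing care is to check that every constraint used to define the arcs is an equality constraint implied by the conditions in $\mathcal{N}$. In particular, the weak versions of $\sigma_{x},\tau_{x}$ only require that connected labels be equal, which is consistent with gluing. Folding edges does not affect the vertex count, so it plays no role in this step.
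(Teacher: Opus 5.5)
Your proposal is correct and takes the same approach as the paper. The paper's one-line proof just notes that each vertex of $\Gamma\left(\sigma_{x},\tau_{x},\pi_{i}\right)$ stands for a block of index labels forced equal by the $\sigma_{x}$, $\tau_{x}$ and $\pi_{i}$ constraints; you spell this out as an injection from valid assignments into maps $V\to[n]$, and you correctly note that dropping the distinctness half of the $P_{\pi_{i}}$ conditions is harmless for an upper bound.
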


\begin{proof}
Each vertex represents an index whose values are all forced to be
equal by the $\sigma_{x},\tau_{x}$ and $\pi_{i}.$ 
\end{proof}
\begin{lem}
$n^{-\sum_{f}|\sigma_{x}\wedge\tau_{x}|}=n^{-|E(\gamsig)|}.$
\end{lem}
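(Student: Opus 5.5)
The plan is to show, letter by letter, that the number of $x$--labeled edges of $\gamsig$ equals $|\sigma_{x}\wedge\tau_{x}|$, and then sum over $x\in\{a,b,c,d\}$ to get the exponent $\sum_{x}|\sigma_{x}\wedge\tau_{x}|=|E(\gamsig)|$. Fix $x$. Before folding, the $x$--labeled edges (the $2k$ coming from the $R$--cycles together with the $|w_{x}^{+}|+|w_{x}^{-}|$ coming from the $w$--cycle) are in bijection with the ground set $\{1,\dots,2k+|w_{x}^{+}|+|w_{x}^{-}|\}$ on which $\sigma_{x}$ and $\tau_{x}$ live. The labeling convention makes this bijection consistent. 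Position $t$ of the $\sigma_{x}$ multi--index $J_{x}^{1}\sqcup J_{x}^{2}\sqcup_{j\in w_{x}^{+}}j\sqcup_{\ell\in w_{x}^{-}}(\ell+1)$ is the initial vertex of the $t$-th $x$--edge. Position $t$ of the $\tau_{x}$ multi--index $I_{x}^{1}\sqcup I_{x}^{2}\sqcup_{j\in w_{x}^{+}}(j+1)\sqcup_{\ell\in w_{x}^{-}}\ell$ is its terminal vertex. This matches the edge orientations: from $\left(J_{x}^{j}\right)_{i}$ to $\left(I_{x}^{j}\right)_{i}$, from $i_{j}$ to $i_{j+1}$ for $j\in w_{x}^{+}$, and, for an $x^{-1}$ letter at $\ell$, the $x$--edge runs from $i_{\ell+1}$ to $i_{\ell}$.

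Next I would identify the folding rule with the meet. By construction, two $x$--edges $t,t'$ are folded exactly when $t\sim t'$ in $\sigma_{x}$ (a $\sigma_{x}$--arc between their initial vertices) and $t\sim t'$ in $\tau_{x}$ (a $\tau_{x}$--arc between their terminal vertices). That is the same as $t\sim t'$ in $\sigma_{x}\wedge\tau_{x}$.

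One point needs care: folding should be read as taking the equivalence relation generated by these pairwise identifications. Since $\sigma_{x}\wedge\tau_{x}$ is already an equivalence relation (blocks are the pairwise intersections of blocks of $\sigma_{x}$ and of $\tau_{x}$), it is transitive. Hence the classes of folded $x$--edges are exactly the blocks of $\sigma_{x}\wedge\tau_{x}$, with no further merging.

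I also need that edges in distinct blocks are not merged by some other mechanism, such as vertex gluing via $\pi_{i}$--arcs identifying both endpoints. By the definition of $\gamsig$, edges are only folded via the $\sigma_{x}/\tau_{x}$ rule. Two $x$--edges in different blocks with coinciding endpoints after quotienting therefore stay as parallel edges, and this is precisely what makes the equality exact rather than an inequality.

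Finally, edges with different labels are never folded. This gives $|E(\gamsig)|=\sum_{x}|\sigma_{x}\wedge\tau_{x}|$, and exponentiating yields the claim. I expect the only real obstacle to be this bookkeeping: checking the consistency of the $w_{x}^{-}$ orientation, and confirming that the folding relation is transitive as a relation on edges (meet) rather than on vertex blocks.
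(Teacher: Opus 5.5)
Your proposal is correct and follows the paper's argument. The paper's proof is the one-line observation that, for each fixed $x$, the blocks of $\sigma_{x}\wedge\tau_{x}$ correspond exactly to the $x$--labeled edges of $\Gamma\left(\sigma_{x},\tau_{x},\pi_{i}\right)$; you verify the same correspondence in more detail (the orientation bookkeeping for the $w_{x}^{-}$ letters, transitivity of the meet, and the fact that edges are folded only by the $\sigma_{x}/\tau_{x}$ rule) before summing over $x$.
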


\begin{proof}
Clearly, for each fixed $x$, each connected component of $\sigma_{x}\wedge\tau_{x}$
corresponds to an $x$--labeled edge in $\gamsig.$ 
\end{proof}
Altogether this yields the following proposition. 
\begin{prop}
\label{prop:bound on integral del and chi}For any $k\ge1,$ $\lambda\vdash k$
and any $w\in F_{4}$ with $w\neq e,$ we have 
\[
\integral\ll_{k,\ell(w)}\sum_{(\sigma_{x},\tau_{x},\pi_{i})\in\match}n^{-\sum_{i}\mathrm{del}(\pi_{i})}n^{\chi\left(\gamsig\right)}.
\]
\end{prop}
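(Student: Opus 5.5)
This proposition follows by combining Proposition \ref{prop: bound on integral in terms of del, sigma, tau, N} with the two lemmas just proved. First, I would observe that $\match$ is a finite set whose cardinality depends only on $k$ and $\ell(w)$ (and on $g$, fixed here as $2$). Indeed, each $\sigma_{x},\tau_{x}$ ranges over $\mathrm{Part}\left(2k+|w_{x}^{+}|+|w_{x}^{-}|\right)$ and each $\pi_{i}$ over $\skpart$, and none of these sets depends on $n$. Hence it suffices to bound each summand individually, with a constant that is uniform in $n$. The implied constants from Proposition \ref{prop: bound on integral in terms of del, sigma, tau, N} come from $c(n,k,\lambda,\pi_{i})=O\left(n^{-\mathrm{del}(\pi_{i})}\right)$ and $\mathrm{Wg}=O\left(n^{-|\sigma_{x}\wedge\tau_{x}|}\right)$. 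Both are uniform over the finitely many partitions involved, and the prefactor $1/d_{\lambda}$ is bounded by $1$.

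Next, for a fixed datum $(\sigma_{x},\tau_{x},\pi_{i})$, I would substitute the two lemmas into the summand of Proposition \ref{prop: bound on integral in terms of del, sigma, tau, N}. The counting lemma gives $\mathcal{N}\left(\sigma_{x},\tau_{x},\pi_{i}\right)\le n^{|V(\gamsig)|}$. This holds because every admissible assignment of indices is constant on each block of $\mathcal{P}_{(\sigma_{x},\tau_{x},\pi_{i})}$, so an assignment is determined by a choice of value in $[n]$ for each glued vertex. The edge lemma gives $n^{-\sum_{x}|\sigma_{x}\wedge\tau_{x}|}=n^{-|E(\gamsig)|}$. Multiplying, the summand is at most
\[
n^{-\sum_{i}\mathrm{del}(\pi_{i})}n^{|V(\gamsig)|-|E(\gamsig)|}=n^{-\sum_{i}\mathrm{del}(\pi_{i})}n^{\chi\left(\gamsig\right)}.
\]
Summing over $\match$ then yields the claim.

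The only point that needs care is the edge identification. For each $x$, I need the number of $x$-labeled edges of $\gamsig$ to equal $|\sigma_{x}\wedge\tau_{x}|$. Here $\sigma_{x}\wedge\tau_{x}$ is read as the partition of the $x$-labeled edge slots whose blocks are generated by the relation ``initial vertices $\sigma_{x}$-related and terminal vertices $\tau_{x}$-related''; this is exactly the rule used when folding. I would check that the folding is transitive. This holds because $\sigma_{x}$ and $\tau_{x}$ are themselves equivalence relations, so the classes of folded edges are precisely the blocks of this partition. With that in place, the remaining argument is pure bookkeeping.
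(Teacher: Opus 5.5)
Your proposal is correct and follows the paper's own argument. The paper gets this proposition in one line by inserting the two lemmas $\mathcal{N}(\sigma_x,\tau_x,\pi_i)\le n^{|V|}$ and $n^{-\sum_x|\sigma_x\wedge\tau_x|}=n^{-|E|}$ into Proposition \ref{prop: bound on integral in terms of del, sigma, tau, N}. Your remarks on the finiteness of $\mathrm{Match}(w,k)$, the uniformity of the implied constants, and the folding classes being exactly the blocks of $\sigma_x\wedge\tau_x$ (an intersection of two equivalence relations) make explicit what the paper leaves implicit.
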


\begin{rem}
The significance of Lemma \ref{lem:NO-SINGLETONS} and Lemma \ref{lem: distinctness of vertices}
is the following. In constructing $\gamsig,$ without Lemma \ref{lem: distinctness of vertices},
one could take $\pi_{i}=\mathrm{Id}$ for each $i$, glue $\left(J_{x}^{1}\right)_{1},\dots,\left(J_{x}^{1}\right)_{k}$
together and $\left(J_{x}^{2}\right)_{1},\dots,\left(J_{x}^{2}\right)_{k}$
together via $\sigma_{x}$--arcs (doing the same with the $\tau_{x}$--arcs)
and fold the respective $x$--labeled edges together in to two distinct
$x$--labeled edges. Informally, this would correspond to gluing
each $R$--cycle (each reading exactly $R$ when traversed) on top
of each other and leaving the $w$--cycle untouched. The result would
have Euler characteristic $\chi=0$ and the bound given in Theorem
\ref{thm: bound on integral main} would not be derived from Proposition
\ref{prop: bound on integral in terms of del, sigma, tau, N}. Without
Lemma \ref{lem:NO-SINGLETONS}, one could again take $\pi_{i}=\mathrm{Id}$
for each $i$ and not add any $\sigma_{x}$--arcs or $\tau_{x}$--arcs.
This would correspond to leaving each of the $k$ $R$--cycles unglued
and the resulting $\gamsig$ would also have Euler characteristic
$\chi=0.$ These lemmas are therefore crucial for this argument to
succeed and it is the explicit construction of $\left(V^{\lambda^{+}(n)}\right)^{\oplus d_{\lambda}}$
inside $\A\subseteq\cnk$ that allows for this.
\end{rem}

\subsubsection{\label{subsec:Simplifying-lemmas}Simplifying lemmas}

The following lemmas simplify our expression by allowing us to take
the $\pi_{i}$ as permutations and $\sigma_{x}=\tau_{x}.$ 
\begin{lem}
Suppose each $\pi_{i}\le\hat{\pi}_{i}$ where $\hat{\pi}_{i}\in\iota\left(S_{k}\right)$.
Then 
\[
n^{-\sum_{i}\mathrm{del}(\pi_{i})}n^{\chi\left(\gamsig\right)}\ll n^{\chi\left(\Gamma(\sigma_{x},\tau_{x},\hat{\pi}_{i}\right)}.
\]
\end{lem}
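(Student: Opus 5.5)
Passing from $\pi_i$ to $\hat{\pi}_i$ only adds arcs to the construction of $\gamsig$. Concretely, we will show
\[
\chi\left(\Gamma(\sigma_{x},\tau_{x},\hat{\pi}_{i})\right)\ge\chi\left(\gamsig\right)-\sum_{i}\mathrm{del}(\pi_{i}),
\]
which is the claimed bound with implied constant $1$. Since $\pi_i\le\hat{\pi}_i$ and $\hat{\pi}_i=\iota(\tau_i)$ is a perfect matching between the two rows, the diagram of $\hat{\pi}_i$ is obtained from that of $\pi_i$ by adding exactly $|\pi_i|-k=\mathrm{del}(\pi_i)$ edges. Each added edge joins two singleton blocks of $\pi_i$, one in each row.

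We build $\Gamma(\sigma_x,\tau_x,\hat{\pi}_i)$ from $\gamsig$ by adding these $\sum_i \mathrm{del}(\pi_i)$ arcs one at a time and tracking $\chi$. The partitions $\sigma_x,\tau_x$ are unchanged, so the $\sigma_x\wedge\tau_x$ blocks are unchanged and the edge set is the same before and after. Edges are folded only when both a $\sigma_x$-arc and a $\tau_x$-arc join their endpoints, and $\pi$-arcs never create such arcs. Thus $|E|$ stays equal to $\sum_x|\sigma_x\wedge\tau_x|$, which matches the preceding lemma.

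Adding a single $\pi$-arc merges at most two blocks of the vertex partition $\mathcal{P}_{(\sigma_x,\tau_x,\pi_i)}$. So $|V|$ drops by at most one, and $\chi$ drops by at most one per added arc. Summing over all added arcs gives the displayed inequality, hence
\[
n^{-\sum_i\mathrm{del}(\pi_i)}n^{\chi(\gamsig)}\le n^{\chi(\Gamma(\sigma_x,\tau_x,\hat{\pi}_i))}.
\]

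Two points need checking. First, the graph for $\hat{\pi}_i$ must be well defined in the same sense, i.e.\ $\hat{\pi}_i\in\skpart$. This holds trivially, but the singleton-free condition was imposed on $\mathcal{P}(\sigma_x,\tau_x)$, which does not involve the $\pi_i$, so it still holds. Second, and this is the step I expect to need care, we must confirm that $|E|$ genuinely does not change under the extra vertex identifications. If the construction instead folded edges whenever their endpoints became identified, the extra identifications could only \emph{decrease} $|E|$, which only increases $\chi$ and helps the inequality. Either way the bound survives, so the argument is robust to that interpretation.
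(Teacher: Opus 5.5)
Your proof is correct and is essentially the paper's argument: complete each $\pi_i$ to $\hat{\pi}_i$ by adding the $\sum_i\mathrm{del}(\pi_i)$ missing arcs one at a time. Each arc either merges two distinct vertices, lowering $\chi$ by one, or joins a vertex to itself and leaves $\chi$ unchanged, while the edge count $\sum_x|\sigma_x\wedge\tau_x|$ is unaffected; your remarks that $(\sigma_x,\tau_x,\hat{\pi}_i)$ is still a valid matching datum and that the edge set does not change make explicit what the paper leaves implicit.
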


\begin{proof}
Suppose we add $P$ edges to complete the $\pi_{i}$ to $\hat{\pi}_{i},$
i.e. 
\[
\sum_{i}\mathrm{del}(\pi_{i})=P.
\]

There are two possible situations that arise each time we add an edge
to say $\pi_{j}.$ Either this edge, which defines an additional arc,
glues together two distinct vertices of $\gamsig$ (thus \emph{decreasing
}$\chi$ by $1$) or the edge forces a gluing of a vertex to itself
(i.e. the endpoints of the additional $\pi_{j}$--arc are already
connected by some combination of $\sigma_{x},\tau_{x}$ and $\pi_{i}$--arcs
and are thus already glued together) in which case $\chi$ remains
unchanged. So clearly $\chi$ decreases by at most the number of edges
added to the collection of $\pi_{i}$, which is $P$, so we have 
\[
\chi\left(\gamsig\right)\le\chi\left(\Gamma\left(\sigma_{x},\tau_{x},\hat{\pi}_{i}\right)\right)+P.
\]
 
\end{proof}
\begin{lem}
We have 
\[
n^{\chi\left(\gamsig\right)}\ll\max\bigg\{ n^{\chi\left(\Gamma(\sigma_{x},\sigma_{x},\pi_{i}\right)},n^{\chi\left(\Gamma(\tau_{x},\tau_{x},\pi_{i}\right)}\bigg\}.
\]
\end{lem}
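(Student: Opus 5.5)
The plan is to prove the stronger averaged inequality
\[
2\chi\left(\Gamma(\sigma_{x},\tau_{x},\pi_{i})\right)\le\chi\left(\Gamma(\sigma_{x},\sigma_{x},\pi_{i})\right)+\chi\left(\Gamma(\tau_{x},\tau_{x},\pi_{i})\right),
\]
which immediately gives the lemma, since the average of two numbers is at most their maximum. I would verify it separately for the edge count and the vertex count of the three graphs.

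\textbf{Setup.} For each letter $x$, index the $x$--labelled edges of the $R$--cycles and the $w$--cycle by the common label set $[2k+|w_{x}^{+}|+|w_{x}^{-}|]$. A pair of partitions $(\alpha_{x},\beta_{x})$ of this label set then acts as follows: $\alpha_{x}$ identifies the tails of $x$--edges, and $\beta_{x}$ identifies their heads. With this notation, the three graphs in the statement are exactly $\Gamma(\sigma_{x},\tau_{x},\pi_{i})$, $\Gamma(\sigma_{x},\sigma_{x},\pi_{i})$ and $\Gamma(\tau_{x},\tau_{x},\pi_{i})$.

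\textbf{Edges.} The edge count of $\Gamma(\alpha_{x},\beta_{x},\pi_{i})$ is $\sum_{x}|\alpha_{x}\wedge\beta_{x}|$, where $\alpha_{x}\wedge\beta_{x}$ is the common refinement. Since $\sigma_{x}\wedge\tau_{x}$ refines both $\sigma_{x}$ and $\tau_{x}$,
\[
|\sigma_{x}\wedge\tau_{x}|\ge\max\left(|\sigma_{x}|,|\tau_{x}|\right)\ge\tfrac{1}{2}\left(|\sigma_{x}|+|\tau_{x}|\right).
\]
Summing over $x$ gives $2|E(\Gamma(\sigma,\tau,\pi))|\ge|E(\Gamma(\sigma,\sigma,\pi))|+|E(\Gamma(\tau,\tau,\pi))|$, which is the correct direction.

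\textbf{Vertices.} This is the main obstacle. I need
\[
2|V(\Gamma(\sigma_{x},\tau_{x},\pi_{i}))|\le|V(\Gamma(\sigma_{x},\sigma_{x},\pi_{i}))|+|V(\Gamma(\tau_{x},\tau_{x},\pi_{i}))|.
\]
Each vertex set is the block set of a join of equivalence relations on the underlying vertex labels:
\begin{itemize}
\item $\Pi$, the relation generated by all $\pi_{i}$--arcs;
\item $T(\alpha)$, which identifies tails of $x$--edges according to $\alpha_{x}$;
\item $H(\beta)$, which identifies heads of $x$--edges according to $\beta_{x}$.
\end{itemize}
Then $|V(\Gamma(\alpha,\beta,\pi))|=|\Pi\vee T(\alpha)\vee H(\beta)|$.

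The tool I would use is submodularity of the rank function $r(P)=N-|P|$ on the partition lattice, where $N$ is the number of vertex labels. The rank $r(P)$ is the rank of the graphic matroid spanned by the identifying arcs. Written in these terms, the needed inequality becomes
\[
r(\Pi\vee T(\sigma)\vee H(\sigma))+r(\Pi\vee T(\tau)\vee H(\tau))\le2\,r(\Pi\vee T(\sigma)\vee H(\tau)).
\]

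Pure lattice submodularity does not give this directly. So I would exploit the specific structure: tail-labels and head-labels of the same edge-label are linked only through the edges themselves, not through the arcs. My first attempt would be a ``doubling'' construction. Take two copies of the cycle system, glue tails by $\sigma$ in both copies, and glue heads by $\sigma$ in one copy and by $\tau$ in the other. Then compare with the system in which the copies are connected through $\sigma\wedge\tau$. The aim is to exhibit the arcs of $\Gamma(\sigma,\sigma,\pi)\sqcup\Gamma(\tau,\tau,\pi)$ as being spanned, in the graphic matroid, by the arcs of two copies of $\Gamma(\sigma,\tau,\pi)$. This is where the matroid rank inequality would come from.

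\textbf{Fallback.} If the doubling argument fails, I would argue by induction on the number of blocks by which $\tau_{x}$ differs from $\sigma_{x}$. At each step I would change $\tau_{x}$ toward $\sigma_{x}$ by moving a single edge-label between blocks. Each such move changes $V$ by at most one and changes $E$ by at most one, with correlated signs. I would then track $\chi$ along the path from $(\sigma,\tau)$ to $(\sigma,\sigma)$ and along the path from $(\sigma,\tau)$ to $(\tau,\tau)$, and show that $\chi$ cannot strictly decrease along both paths.

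Throughout, singleton-freeness and Lemma \ref{lem: distinctness of vertices} play no role, because the inequality is purely about graph counts. The lemma therefore holds for every triple in which the relevant graphs are defined.
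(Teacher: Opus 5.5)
\textbf{The gap is the vertex inequality.} Your edge half is fine, but the vertex half,
\[
2|V(\Gamma(\sigma_x,\tau_x,\pi_i))|\le|V(\Gamma(\sigma_x,\sigma_x,\pi_i))|+|V(\Gamma(\tau_x,\tau_x,\pi_i))|,
\]
is false. So neither the doubling construction nor any matroid-rank argument can establish it. Here is a counterexample inside the paper's own setting:
\begin{itemize}
\item Take $g=2$ and $w=a^{4}$, a shortest representative of a nontrivial class. Take $k=1$, the $\pi_i$ the identity matchings, and $\sigma_x=\tau_x=\{\{1,2\}\}$ for $x\in\{b,c,d\}$.
\item Index the six $a$--edges as follows: the two $R$--edges are $1,2$, and the $w$--edges are $3,\dots,6$, where the $j$th $w$--edge runs from $i_j$ to $i_{j+1}$.
\item Set $\sigma_a=\{\{1,2\},\{3,4\},\{5,6\}\}$ and $\tau_a=\{\{1,2\},\{3,6\},\{4,5\}\}$. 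This datum is singleton free.
\end{itemize}
The $R$--part is identical in all three graphs and disjoint from the $w$--part, so compare only the $w$--parts.
\begin{itemize}
\item For $(\sigma_a,\tau_a)$: tails give $i_1=i_2$ and $i_3=i_4$, and heads give the same identifications. This yields $2$ vertices, and $4$ edges because $\sigma_a\wedge\tau_a$ separates all $w$--edges.
\item For $(\sigma_a,\sigma_a)$: heads additionally give $i_2=i_3$ and $i_4=i_1$, so there is $1$ vertex and $2$ edges.
\item For $(\tau_a,\tau_a)$: tails give $i_1=i_4$ and $i_2=i_3$, so again there is $1$ vertex and $2$ edges.
\end{itemize}
Thus the vertex inequality fails by $2$. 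The Euler characteristics $-2\le-1$ are consistent only because the edge side has slack $4$. Your averaged $\chi$--inequality is actually true, but the loss in vertices must be paid for by the gain in edges, so it cannot be split into separate $V$ and $E$ inequalities.

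\textbf{How the paper couples them.} The paper fixes one letter $x$ and moves the head partition along a specific path through the meet.
\begin{itemize}
\item First refine $\tau_x$ down to $\sigma_x\wedge\tau_x$. The $x$--edge count $|\sigma_x\wedge\tau_x|$ is unchanged, and the vertex count can only increase.
\item Then coarsen from $\sigma_x\wedge\tau_x$ up to $\sigma_x$, one merge at a time. The head partition stays below $\sigma_x$, so the edge count equals its number of blocks and drops by exactly one per merge. The vertex count drops by at most one, since one extra identification merges at most two components.
\end{itemize}
Hence $\chi$ is non-decreasing along this path, giving $\chi(\Gamma(\sigma_x,\tau_x,\pi_i))\le\chi(\Gamma(\sigma_x,\sigma_x,\pi_i))$. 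Doing the same with the roles of tails and heads swapped gives the bound by $\chi(\Gamma(\tau_x,\tau_x,\pi_i))$, which is stronger than the max.

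\textbf{Why your fallback does not close the gap.} It gestures at a stepwise path but misses this structure. Moving a single label between blocks is simultaneously a split and a merge, so neither $\Delta V$ nor $\Delta E$ has a controlled sign. Also, ``$\chi$ cannot strictly decrease along both paths'' is just the statement to be proved. The missing idea is routing through $\sigma_x\wedge\tau_x$, so that each step is either a split with $E$ constant or a merge with $\Delta E=-1\le\Delta V$.
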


\begin{proof}
Without loss of generality, let $x=a$ and assume $\sigma_{x'}$ and
$\tau_{x'}$ are all fixed for $x'\neq x$. Now fix some $\sigma_{x}$
and $\tau_{x}.$ The number of $x$--labeled edges in $\gamsig$
is exactly $|\sigma_{x}\wedge\tau_{x}|$. Consider first undoing the
$\tau_{x}$ identifications that are different from $\sigma_{x}$
until the only arcs between the terminal vertices of $x$--labeled
edges are those present in $\sigma_{x}\wedge\tau_{x}.$ With each
arc we remove, the number of edges does not change and the number
of vertices in our graph either stays the same or increases by one. 

Then we add identifications (to the terminal vertices of the $x$--labeled
edges) so that they match $\sigma_{x}.$ Each time we add an identification,
we decrease the number of $x$--labeled edges by one, and the total
number of vertices either stays the same or decreases by one and hence,
in total, $\chi$ either stays the same or increases.
\end{proof}
We will now denote by 
\[
\mathrm{Match}_{S_{k}}(w,k)\subset\match
\]
the set of all matching data $(\sigma_{x},\sigma_{x},\pi_{i})\in\match$
where $\pi_{i}$ is the image of some permutation in $\part$. We
will abbreviate our notation and write $(\sigma_{x},\pi_{i})$ for
the elements of $\mathrm{Match}_{S_{k}}(w,k).$ The previous two lemmas
and Proposition \ref{prop:bound on integral del and chi} combined
give the following corollary. 
\begin{cor}
\label{cor: INTEGRAL bounded by n^=00005Cchi}For any $k\ge1,$ $\lambda\vdash k$
and any $w\in F_{4}$ with $w\neq e,$ we have 
\[
\integral\ll_{k,\ell(w)}n^{\max\chi\left(\samgamsig\right)},
\]
where the maximum is taken over all $(\sigma_{x},\pi_{i})\in\mathrm{Match}_{S_{k}}(w,k).$ 
\end{cor}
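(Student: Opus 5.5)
The corollary should follow by chaining Proposition \ref{prop:bound on integral del and chi} with the two simplifying lemmas. The key observation is that $\match$ is a finite set whose cardinality depends only on $k$ and $\ell(w)$ (and $g$), so the sum in Proposition \ref{prop:bound on integral del and chi} has $O_{k,\ell(w)}(1)$ terms. It therefore suffices to bound each individual summand $n^{-\sum_i\mathrm{del}(\pi_i)}n^{\chi(\gamsig)}$ by $n^{\chi(\samgamsig')}$ for some datum $(\sigma'_x,\pi'_i)\in\mathrm{Match}_{S_k}(w,k)$.

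Fix $(\sigma_x,\tau_x,\pi_i)\in\match$. Each $\pi_i\in\skpart$, so by definition there is some $\hat\pi_i\in\iota(S_k)$ with $\pi_i\le\hat\pi_i$. The first simplifying lemma then gives
\[
n^{-\sum_i\mathrm{del}(\pi_i)}n^{\chi(\gamsig)}\ll n^{\chi(\Gamma(\sigma_x,\tau_x,\hat\pi_i))}.
\]
Next I apply the second simplifying lemma one letter $x$ at a time. For $x=a$, I replace $(\sigma_a,\tau_a)$ by either $(\sigma_a,\sigma_a)$ or $(\tau_a,\tau_a)$, whichever does not decrease $\chi$, keeping all other partitions fixed. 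I then repeat for $b,c,d$ (in general for all $2g$ letters). After finitely many steps, this yields partitions $\sigma'_x$ with
\[
n^{\chi(\Gamma(\sigma_x,\tau_x,\hat\pi_i))}\ll n^{\chi(\Gamma(\sigma'_x,\sigma'_x,\hat\pi_i))}.
\]

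The point requiring care is checking that the resulting datum $(\sigma'_x,\sigma'_x,\hat\pi_i)$ genuinely lies in $\match$, and hence in $\mathrm{Match}_{S_k}(w,k)$. Three conditions need to hold.
\begin{itemize}
\item The constraint from Lemma \ref{lem: distinctness of vertices} must be preserved. Since $\sigma'_x$ equals either $\sigma_x$ or $\tau_x$, neither of which joins two indices from the same block of $k$, this is inherited. (Any datum violating it contributes zero anyway.)
\item The combined partition must remain singleton free. Passing from $\pi_i$ to $\hat\pi_i$ only coarsens the vertex partition, so no singletons are created there. Replacing $\tau_x$ by $\sigma_x$ could in principle isolate a terminal vertex. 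If this happens, I would observe that both lemmas are statements about Euler characteristics of the graphs alone. One can therefore take the maximum in the corollary over the slightly larger class of data of the form $(\sigma_x,\sigma_x,\pi_i)$ with $\pi_i\in\iota(S_k)$, where the relevant finite set still depends only on $k$ and $\ell(w)$. Alternatively, one checks that every vertex of an $R$--cycle lies in a nontrivial $\hat\pi_i$ block, since $\hat\pi_i$ is a perfect matching between consecutive multi-indices, so these vertices are never singletons. The $w$--cycle vertices are each both an initial and a terminal vertex of some edge, and that edge's letter class is unaffected at one of its endpoints.
\item Since $\hat\pi_i\in\iota(S_k)$, the datum has the form required for $\mathrm{Match}_{S_k}(w,k)$.
\end{itemize}
I expect this bookkeeping to be the only real obstacle; the rest is a direct chaining of the lemmas.

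Finally, each summand is at most a constant times $n^{\max\chi(\samgamsig)}$, with the maximum over $\mathrm{Match}_{S_k}(w,k)$. Summing the $O_{k,\ell(w)}(1)$ terms gives $\integral\ll_{k,\ell(w)}n^{\max\chi(\samgamsig)}$.
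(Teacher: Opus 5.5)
Your chain of reductions is exactly the paper's argument: finiteness of $\mathrm{Match}(w,k)$, then Proposition \ref{prop:bound on integral del and chi}, then the two simplifying lemmas, the second applied letter by letter. The paper simply asserts that these combine.

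You go beyond the paper in checking that the reduced datum lies in $\mathrm{Match}_{S_k}(w,k)$. You single this out as the real obstacle, and your reasoning there is wrong.

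\begin{itemize}
\item Singleton-freeness is a condition on $\mathcal{P}(\sigma_x,\tau_x)$, which is built from the $\sigma_x,\tau_x$ alone. Lemma \ref{lem:NO-SINGLETONS} is proved before the $\pi_i$ even appear. Its role (see the Remark after Proposition \ref{prop:bound on integral del and chi}) is precisely to forbid $R$--edges glued to nothing. So the fact that every $R$--vertex lies in a nontrivial $\hat\pi_i$ block is irrelevant; read your way, the condition would exclude nothing on the $R$--cycles.
\item Your fallback maximises over all $(\sigma_x,\sigma_x,\pi_i)$ with $\pi_i\in\iota(S_k)$ and no singleton condition. That proves a different, useless statement. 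On that class you may leave the $R$--cycles unglued and get $\chi=0$, so Proposition \ref{prop:bounding euler char of Gamma -k} fails there.
\item Your claim about $w$--vertices is false. If $\epsilon_{j-1}=1$ and $\epsilon_j=-1$, then $i_j$ is a terminal vertex of both adjacent edges; it occurs only in $\tau_{x_{j-1}}$ and $\tau_{x_j}$. After the replacement, both occurrences can sit in singleton blocks.
\end{itemize}

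The correct check is short. For each $x$, the label $(J_x^j)_i$ occurs only in $\sigma_x$ and $(I_x^j)_i$ only in $\tau_x$, in each case at a position in $[2k]$. So singleton-freeness of the original datum says exactly that every position in $[2k]$ lies in a block of size at least $2$ in \emph{both} $\sigma_x$ and $\tau_x$. This is inherited by whichever of the two you keep. Together with your correct remark about Lemma \ref{lem: distinctness of vertices}, this is what the later edge-type classification (WR/RR/WW/RRW) relies on.

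Singletons at $w$--labels can genuinely be created, but no condition there was justified in the first place. The proof of Lemma \ref{lem:NO-SINGLETONS} uses $v_p\in\ker T_j$, which has no bearing on a free index $i_j$. Summing over such an index just produces a factor $n$, already counted in $\mathcal{N}(\sigma_x,\tau_x,\pi_i)\le n^{|V|}$. So singleton-freeness in $\mathrm{Match}(w,k)$ should be read on $R$--labels only, and with that reading your reduction does land in $\mathrm{Match}_{S_k}(w,k)$.
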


Thus, our task reduces to bounding 
\[
\max_{(\sigma_{x},\pi_{i})\in\mathrm{Match}_{S_{k}}(w,k)}\chi\left(\samgamsig\right).
\]
We will prove the following proposition. 
\begin{prop}
\label{prop:bounding euler char of Gamma -k}For any $k$ fixed, if
$w$ is a shortest representative of the conjugacy class of $\gamma\in\Gamma_{g}$
with $\gamma\neq e,$ then 
\[
\max_{(\sigma_{x},\pi_{i})\in\mathrm{Match}_{S_{k}}(w,k)}\chi\left(\samgamsig\right)\le-k.
\]
\end{prop}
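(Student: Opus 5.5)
We will analyse the structure of $\Gamma=\samgamsig$ for $(\sigma_{x},\pi_{i})\in\mathrm{Match}_{S_{k}}(w,k)$. Since every $\pi_{i}$ is a permutation, the $R$--cycles are glued end to end into closed loops. Since $\sigma_{x}=\tau_{x}$, every identification of initial vertices of $x$--edges is accompanied by the same identification of terminal vertices, so the corresponding edges are folded. Thus $\Gamma$ is (a quotient of) a labelled graph in which each vertex has at most one outgoing and at most one incoming $x$--edge per letter. Concretely, before the $w$--cycle is attached, the $R$--part is a disjoint union of cycles, each reading a power $R^{m_{j}}$ with $\sum_{j}m_{j}=k$.

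Lemma~\ref{lem: distinctness of vertices} and Lemma~\ref{lem:NO-SINGLETONS} supply two further constraints. First, no two $x$--edges from the same $R$--cycle pair $J_{x}^{j}$ are glued to each other directly. Second, every vertex is identified with some other vertex.

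We then build $\Gamma$ as a $2$--complex rather than a graph. Attach a $2$--cell (a disc with boundary reading $R_{g}^{m}$, or equivalently $m$ fundamental octagons) along each glued $R$--cycle. Also attach an annular region along the $w$--cycle, recording how $w$ is folded against the rest. Since $\Gamma$ has all vertex links of bounded degree and each $R$--edge is folded with some other edge, the resulting complex $X$ is close to a branched surface: each edge of $X$ lies on at most two $R$--faces, plus possibly the $w$--cycle. We then write
\[
\chi(\Gamma)=\chi(X)-\#\{\text{$R$--faces}\}=\chi(X)-\Bigl(\sum_{j}m_{j}\Bigr)\ \text{(counted per octagon)},
\]
so it suffices to show $\chi(X)\le 0$ after accounting for $k$ octagons. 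Equivalently, we must show the $k$ octagons, glued together along their edges and along $w$, cannot contribute positive Euler characteristic beyond the $w$--boundary.

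\textbf{Main step (main obstacle).} We must rule out configurations where the octagons, glued along edges together with the $w$--annulus, form discs or spheres. That is exactly what would make $\chi(X)>0$. Here we use that $w$ is a shortest representative. Suppose a component of $X$ contains a disc made of octagons whose boundary lies along the $w$--cycle. Then a subword of (a cyclic conjugate of) $w$ is equal, modulo $R_{g}$, to a shorter word. The key input is Dehn's/Birman--Series' characterisation: a cyclically reduced $w$ is shortest iff no cyclic subword contains more than half of a cyclic conjugate of $R_{g}^{\pm1}$, with the additional chain conditions on consecutive $4g-1$ pieces. We would then apply a combinatorial Gauss--Bonnet/curvature argument on the octagonal $2$--complex. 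Each interior vertex of the tiling has degree at least $3$, or, via the distinctness lemma, a configuration forcing degree at least $4$. Octagons adjacent to the $w$--boundary must have at most half their boundary on $w$, with the Birman--Series condition on runs. Together these force the curvature of every component touching $w$ to be $\le 0$. Components not touching $w$ are closed branched surfaces built from octagons with all edges paired (by the no--singletons lemma). They have $\chi\le 2-2g\cdot(\#\text{octagons})/\ldots$, which is negative enough. The Lemma~\ref{lem: distinctness of vertices} constraint prevents folding an $R$--cycle onto itself trivially, so spheres do not occur.

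Finally we tally. Each octagon contributes at most $-1$ to $\chi(\Gamma)$ after subtracting faces; since $\sum m_{j}=k$, this gives $\chi(\Gamma)\le -k$. The $w$--cycle contributes nonpositively, because $w\neq e$ is not null--homotopic and a cycle contributes $0$. We expect the curvature bookkeeping near the $w$--cycle, especially the case where the boundary of $w$ contains exactly half of $R_{g}$ (the Birman--Series "chains"), to be the delicate part. This is also consistent with Example~\ref{exa: -k example} showing $-k$ is sharp.
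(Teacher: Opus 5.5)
There is a genuine gap, and it begins at your reduction step. After the $\pi_i$-gluing, the $R$-part is a union of $m$ cycles reading $R_g^{k_i}$ with $\sum_i k_i=k$. So $X$ has $m$ faces, not $k$: a disc on a cycle of length $4gk_i$ cannot be cut into $k_i$ octagons without changing the $1$-skeleton. Hence $\chi(\Gamma(\sigma_x,\sigma_x,\pi_i))=\chi(X)-m$, and what must be shown is $\chi(X)\le m-k$, not $\chi(X)\le 0$. The $\pi_i$ can merge all $k$ copies into one cycle ($m=1$). In that case, ruling out disc and sphere components, which is how you frame the main obstacle, only gives $\chi\le -m$ and says nothing about $-k$. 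The bound is linear in the number of copies of $R_g$, so it has to be proved quantitatively.

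That quantitative step is what your sketch lacks. The paper assigns angle $\pi/(2g)$ to every corner, so a face reading $R_g^{k_i}$ has $\kappa(f)=2\pi-2(2g-1)\pi k_i$. This is the precise form of your heuristic that each copy of $R_g$ costs $-1$, and it reduces the proof to showing $\sum_v\kappa(v)\le 2(2g-2)\pi k$.

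Interior vertices are harmless because their links are unions of full $4g$-corner patterns. Degree at least $3$ or $4$, as you propose, would not suffice for this angle choice.

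Contrary to your claim, the $w$-cycle does not contribute nonpositively; it is exactly where the positive curvature lives. Internal boundary vertices of pieces of $w$ glued along $R$-edges carry curvature up to $\pi\bigl(1-(\mathfrak{he}(v)+1)/(2g)\bigr)$ each. Bounding their total needs three ingredients you do not supply:
\begin{itemize}
\item the Magee--Puder/Birman--Series inequality $\mathfrak{e}(P)\le(2g-1)\mathfrak{he}(P)+2g$ for every piece, with no $+2g$ when the piece is the whole loop;
\item the global count $|E_{\mathrm{WR}}|+2|E_{\mathrm{RR}}|+2|E_{\mathrm{RRW}}|=4gk$, which turns the piece inequalities into $\sum_P|V_P|\le 2(2g-1)k$ and hence $\sum_v\kappa(v)\le \pi\frac{2g-2}{2g-1}\cdot 2(2g-1)k$;
\item negative curvature at the external boundary vertices to absorb the extra $\pi/(2g)$ per piece coming from the $+2g$ term. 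This in turn requires handling external vertices with $\chi(\mathrm{Link}(v))=1$ created by RRW-edges, which is the paper's unzipping procedure.
\end{itemize}
Saying that the Birman--Series conditions ``force the curvature of every component touching $w$ to be $\le 0$'' asserts the outcome of this bookkeeping, and against the wrong target.
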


It is clear that Corollary \ref{cor: INTEGRAL bounded by n^=00005Cchi}
combined with Proposition \ref{prop:bounding euler char of Gamma -k}
immediately imply Theorem \ref{thm: bound on integral main}. The
rest of this paper is then dedicated to proving Proposition \ref{prop:bounding euler char of Gamma -k}.

\begin{figure}[H]
\centering\begin{tikzpicture}[scale=0.65, baseline=(current bounding box.center)]

\pgfdeclarelayer{bg}
\pgfsetlayers{bg,main}
\myoctagon{A}
\begin{scope}[shift={(6,0)}] \myoctagon{B} \end{scope}
\begin{scope}[shift={(12,0)}] \myoctagon{C} \end{scope}

\begin{scope}[shift={(18,0)}] \gammaloop \end{scope}
\begin{pgfonlayer}{bg}
\draw[green] (Au1) -- (Av2);\draw[green] (Au2) -- (Av3);\draw[green] (Au3) -- (Av4);\draw[green] (Au4) -- (Av5);\draw[green] (Au5) -- (Av6);\draw[green] (Au6) -- (Av7);\draw[green] (Au7) -- (Av8);\draw[green] (Au8) -- (Av1);
\draw[green] (Bu1) -- (Bv2);\draw[green] (Bu2) -- (Bv3);\draw[green] (Bu3) -- (Bv4);\draw[green] (Bu4) -- (Bv5);\draw[green] (Bu5) -- (Bv6);\draw[green] (Bu6) -- (Bv7);\draw[green] (Bu7) ..controls (5, 5) and (8,5).. (Cv8);\draw[green] (Bu8) -- (Bv1);
\draw[green] (Cu1) -- (Cv2);\draw[green] (Cu2) -- (Cv3);\draw[green] (Cu3) -- (Cv4);\draw[green] (Cu4) -- (Cv5);\draw[green] (Cu5) -- (Cv6);\draw[green] (Cu6) -- (Cv7);\draw[green] (Bv8) .. controls (5.5, 4) and (7.5, 4)..(Cu7);\draw[green] (Cu8) -- (Cv1);
\draw[red] (Av1)--(Au3);\draw[red] (Au1)..controls(2.25, 2) and (2.25, 2)..(Av3);\draw[red] (Av2)--(Au4);\draw[red] (Au2)--(Av4);\draw[red] (Av5)--(Au7);\draw[red] (Au5)..controls (-0.25,-0.25) and (-0.25,-0.25)..(Av7);\draw[red] (Av6)--(Au8);\draw[red] (Au6)--(Av8);
\draw[red] (Bv6)--(Bu8); \draw[red] (Bv8)--(Bu6);
\draw[red] (Cv6)--(Cu8); \draw[red] (Cv8)--(Cu6);\draw[red] (Bv5) to [out=0, in=225](Cu7); \draw[red] (Bu5) to [out = 290, in=225] (Cv7);\draw[red] (Bv1)..controls (8,5) and (19,5)..  (Cu3); \draw[red] (Bu1) to [out=45, in=45] (Cv3);
\draw[red] (Cv1) to[out=160, in=30] (Bu3); \draw[red] (Cu1) to [out=120, in=45] (Bv3);\draw[red] (Bv2) ..controls (10.5,3) and (11.2, -5).. (Cu4); \draw[red] (Bu2) ..controls (10,3) and (11, -6)..(Cv4);
\draw[red] (Cv2) to[out =10, in=170] (1); \draw[red] (Cu2) to[out=10, in=150] (6);
\draw[red] (Cv5) to [out=325, in=220] (6); \draw[red] (Cu5) to [out =305, in=190] (5);
\draw[red] (Bv4) ..controls (11,-4.5) and (19, -1).. (5); \draw[red] (Bu4)..controls (11,-4.5) and (19, -2).. (4);
\draw[red] (Bv7)..controls (3,-4) and (19, -5).. (4); \draw[red] (Bu7)..controls (1.75,-6.75) and (22.5, -5).. (3); 
\draw[red] (1) to [out = 340, in=200] (2); \draw[red] (2) to [out = 290, in =160] (3);
\end{pgfonlayer}

\end{tikzpicture}

\vspace{-1.5cm}
\caption{\label{fig:Gamma before quotient}Above are the $R$--cycles and
$w$--cycle from Figure \ref{fig: R cycles and w cycles}, together
with some $\sigma_{x}$--arcs and $\tau_{x}$--arcs (in red, with
$\sigma_{x}=\tau_{x}$) and $\pi_{i}$--arcs (in green, with each
$\pi\in\iota\left(\protect\csk\right)$). The corresponding $\protect\samgamsig$
is in Figure \ref{fig:Gamma after quotient}. }
\end{figure}

\begin{figure}[H]
\centering\begin{tikzpicture}[scale=0.65, baseline=(current bounding box.center)]

    \coordinate (O) at (0,0);
    \draw[->-] (O) 
        .. controls (-2,2) and (-2,-2) .. (O) 
        node[midway, left] {$a$};
    \draw[->-] (O) 
        .. controls (-2,-2) and (2,-2) .. (O) 
        node[midway, below] {$b$};
    \draw[->-] (O) 
        .. controls (2,-2) and (2,2) .. (O) 
        node[midway, right] {$c$};
    \draw[->-] (O) 
        .. controls (2,2) and (-2,2) .. (O) 
        node[midway, above] {$d$};
    \filldraw (O) circle (2pt);

\coordinate (1) at (6,3); \coordinate (2) at (9,0); \coordinate (3) at (6,-3); \coordinate (4) at (3,0); 
\draw[->-] (1)--(2) node[midway, below left]{$c$};
\draw[->-] (2)--(3)node[midway, below right]{$b$};
\draw[->-] (4)--(3)node[midway, below left]{$c$};
\draw[->-] (1)--(4)node[midway, below right]{$b$};

\draw[->-] (3) to [out =120, in=240] node[midway, left] {$a$} (1);
\draw[->-] (3) to [out =60, in=300] node[midway, right] {$d$} (1);

\draw[->-] (1) to [out =200, in=70] node[midway, above] {$a$} (4);
\draw[->-] (4) to [out =110, in=160] node[midway, above left] {$b$} (1);

\draw[->-] (1) to [out =340, in=110] node[midway, above] {$d$} (2);
\draw[->-] (2) to [out =70, in=20] node[midway, above right] {$c$} (1);

\draw[blue,->-] (1) 
        .. controls (8,5) and (4,5) .. (1) 
        node[midway, above, black] {$a$};

\node[above] at (1) {$v$};
\node [right] at(2) {$u$};

\draw[blue, transform canvas={xshift=0.5pt,yshift=0.5pt}](1) -- (2);
\draw[blue, transform canvas={xshift=0.5pt,yshift=-0.5pt}](2) -- (3);
\draw[blue, transform canvas={xshift=-0.5pt,yshift=-0.5pt}](3) -- (4);
\draw[blue, transform canvas={xshift=-0.5pt,yshift=0.5pt}](4) -- (1);

\filldraw (1) circle (2pt);
\filldraw (2) circle (2pt);
\filldraw (3) circle (2pt);
\filldraw (4) circle (2pt);

\end{tikzpicture}

\caption{\label{fig:Gamma after quotient}This is the $\protect\samgamsig$
corresponding to Figure \ref{fig:Gamma before quotient}. The blue
edges indicate the edges that are formed by gluing edges from the
$w$--cycle to edges in the $R$--cycles (or to other edges in the
$w$--cycle). In the language of \S\ref{subsec:Bounding}, these
are the boundary edges of $X$, there is exactly one \emph{piece},
exactly one \emph{external boundary vertex }(labeled $v$) and two
faces, one with boundary $R=R_{2}$ and one with boundary $R^{2}$.}

\end{figure}

\section{\label{sec:Bounding-Euler-characteristic}Bounding Euler characteristic}

We will prove Proposition \ref{prop:bounding euler char of Gamma -k}
by constructing a combinatorial $2$--complex $X=X\left(\sigma_{x},\pi_{i}\right),$
whose $1$--skeleton, $X^{(1)},$ is exactly $\samgamsig$. We will
then use angle structures to bound the Euler characteristic of $X$
and thus bound $\chi\left(\samgamsig\right).$ We will refer to each
$(\sigma_{x},\pi_{i})\in\mathrm{Match}_{S_{k}}(w,k)$ as a \emph{gluing}
of the $R$--cycles and $w$--cycle, and the process of identifying
vertices and folding edges as gluing as well. To construct $X,$ we
first glue according to the $\pi_{i}$ identifications. The result
is a number, say $m$, disjoint cycles reading out $R_{g}^{k_{i}}$
when traversed, say, clockwise, with $\sum_{i}k_{i}=k$, and the $w$--cycle.
We refer to the disjoint cycles reading each $R_{g}^{k_{i}}$ also
as the $R$--cycles. To construct $\samgamsig$ from these cycles
we simply glue and fold according to the $\sigma_{x}$ identifications.
To construct $X,$ we glue in a disc to each of the $R_{g}^{k_{i}}$--cycles
before gluing according to the $\sigma_{x}$. We refer to these discs
as the faces of $X$. Obviously, we have 
\[
\chi\left(\samgamsig\right)=\chi(X)-m.
\]

\begin{rem}
In principle, one could estimate $\chi\left(\samgamsig\right)$ directly
using the combinatorial formula 
\[
\chi\left(\samgamsig\right)=\sum_{v}1-\frac{\deg(v)}{2}
\]
and keeping track of the terms that appear. We choose to construct
the $2$--complex $X$ and use angle structures (see \S\ref{subsec:Angle-structures})
as they exhibit the sort of cancellations required to obtain Propositions
\ref{prop:bounding euler char of Gamma -k} more easily, as well as
highlighting where the conditions on $w$ come in to effect, discussed
in the following section, but we stress that this construction is
not wholly necessary. 
\end{rem}

The methods of this paper are directly inspired by the approach of
Magee in \cite{Magee2021}. We now to highlight the main difference
between the two papers. In \cite{Magee2021}, Magee uses each matching
datum $(\sigma_{x},\pi_{i})$ to construct a surface whose dual graph
is exactly our $\samgamsig$. The primary difference being, each $\sigma_{x}$
is a permutation as opposed to a set partition. That is, each $x$--labeled
edge is formed by gluing together exactly \emph{two} $x$--labeled
edges. This is not the case in the current paper, since the $\sigma_{x}$--arcs
can define a gluing of many $x$--labeled edges in the $w$--cycle
in to a single edge. Again we emphasise that, although not wholly
necessary, our use of angle structures to bound $\chi\left(\samgamsig\right)$
allows one to overcome these differences more easily. 

\subsection{\label{subsec:Combinatorial-characterisation-o}Combinatorial characterisation
of shortest element in conjugacy class}

We digress briefly to discuss a combinatorial characterisation of
a word $w$ in the alphabet $\{a_{1},b_{1},\dots,a_{g},b_{g}\}$ (i.e.
$w\in F_{2g}$) being a shortest representative of the conjugacy class
of $\gamma\in\Gamma_{g}.$ This idea goes back to the work of Dehn
\cite{Dehn} on the conjugacy problem in surface groups, later refinements
of Birman and Series \cite{BirmanSeries} and an explicit inequality
derived from their work by Magee and Puder \cite{MageePuder2023}.
One can read a similar overview to the one given here in \cite{Magee2021}.

The universal cover $U$ of $\Sigma_{g}$ can be viewed as a disc
tiled by $4g$--gons. The edges are directed and have labels in the
alphabet $\alphabet$, with the boundary of each $4g$--gon, when
read say clockwise, read out the word $R_{g}=[a_{1},b_{1}]\dots[a_{g},b_{g}].$
There is a natural action of $\Sigma_{g}$ on $U$ by translations
obtained by fixing a base point $u$. Given $\gamma\in\Gamma_{g}$
with $\gamma\neq e,$ the quotient 
\[
U/\langle\gamma\rangle
\]
is an annulus $A_{\gamma}$ tiled by infinitely many $4g$--gons,
inheriting the edge labels and directions from $U$. The base point
$u$ maps to a vertex we label $v_{0}.$ 

Pick $w\in F_{2g}$ to be a shortest reduced word representing the
conjugacy class of $\gamma$. Beginning at $v_{0},$ we follow a loop
$\mathcal{L}_{w}$ by following the edges as spelled out by $w$.
Since $w$ is a shortest element representing the conjugacy class
of $\gamma$ and $\gamma\neq e,$ this is a non null--homotopic loop
in the $1$--skeleton of $A_{\gamma},$ $A_{\gamma}^{(1)}.$ Every
vertex in $A_{\gamma}^{(1)}$ has $4g$ incident half--edges with
a well--defined cyclic order on the labels as shown in Figure \ref{fig: star of interior and internal boundary vertex}.
To be precise, this order is 
\[
a_{i}\ \mathrm{out},\ b_{i}\ \mathrm{in},\ a_{i}\ \mathrm{in},\ b_{i}\ \mathrm{out},
\]
repeating cyclically for $i=1,\dots,g$. The loop $\mathcal{L}_{w}$
cuts $A_{\gamma}$ in to two annuli, say $A_{\gamma}^{+}$ and $A_{\gamma}^{-}$,
and each half--edge incident at the vertices in the loop belongs
to exactly one of these.

A piece $P$ is a contiguous collection of edges of $\mathcal{L}_{w}$
defined by $w$ together with the hanging half--edges, at each vertex
taking the hanging half--edges either all in $A_{\gamma}^{+}$ or
all in $A_{\gamma}^{-}$. We write $\mathfrak{e}(P)$ for the number
of loop edges in $P$ and $\mathfrak{he}(P)$ for the number of hanging
half--edges. Birman and Series \cite{BirmanSeries} proved that there
are combinatorial restrictions on the pieces of the loop that can
appear. The following inequality is derived by Magee and Puder as
an almost direct consequence of the work in \cite{BirmanSeries}.
The proof is based on the idea that, if $w$ follows the boundary
of a $4g$--gon for `too long', then it can be shortened using the
relation (i.e. by following the other sides of the $4g$--gon). This
lemma will be crucial in \S\ref{subsec:Bounding}, where we use it
to determine our bound on $\chi\left(\samgamsig\right).$
\begin{lem}[{\cites[Lemma 5.18]{MageePuder2023}[Lemma 4.10]{Magee2021}}]
 \label{lem: bound for e(P), he(P)}Suppose $\gamma\neq e$ and $w\in F_{2g}$
is a shortest element representing the conjugacy class of $\gamma.$
If $P$ is a piece following the entire loop defined by $w$ together
with its hanging half--edges, then 
\[
\mathfrak{e}(P)\le(2g-1)\mathfrak{he}(P).
\]
Otherwise, $P$ follows some subpath of the loop and we have 
\[
\mathfrak{e}(P)\le(2g-1)\mathfrak{he}(P)+2g.
\]
 
\end{lem}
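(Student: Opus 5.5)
We would prove the two inequalities in Lemma \ref{lem: bound for e(P), he(P)} by cutting the piece $P$ into maximal \emph{runs}, meaning maximal subpaths of $\mathcal{L}_{w}$ that follow the boundary of a single $4g$--gon of $A_{\gamma}$. We would then bound the number of runs and the length of each run separately.

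\textbf{Runs versus hanging half--edges.} Fix the side, say $A_{\gamma}^{+}$, that contains the hanging half--edges of $P$. At an interior vertex $v$ of $P$, the two loop edges through $v$ and the half--edges of $A_{\gamma}^{+}$ at $v$ occupy a contiguous arc of the cyclic order at $v$. Consecutive half--edges in this cyclic order bound a corner of a $4g$--gon. So if there are $h_{v}$ hanging half--edges on the $+$ side at $v$, then the two loop edges at $v$ lie on a common $4g$--gon exactly when $h_{v}=0$. Otherwise there are $h_{v}$ corner changes, and in particular the run changes at $v$. Hence the number of runs in a subpath piece is at most $1+\sum_{v}\mathbf{1}[h_{v}>0]\le1+\mathfrak{he}(P)$. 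When $P$ is the whole closed loop the count is cyclic, so the number of runs is at most $\mathfrak{he}(P)$. (If $\mathfrak{he}(P)=0$ for the closed loop, then $w$ would trace a full $4g$--gon boundary, which is impossible since $w$ is shortest and $\gamma\neq e$.)

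\textbf{Length of runs.} A run of length $\ge2g+1$ along a $4g$--gon can be replaced by the complementary $\le2g-1$ edges of that polygon. This gives a shorter word in the same conjugacy class (after cyclic rotation when the run wraps around), contradicting minimality. So every run has length $\le2g$. Bounding each run by $2g$ alone only gives the weaker estimate $\mathfrak{e}(P)\le2g(\mathfrak{he}(P)+1)$. To reach the stated constants we need the Birman--Series/Dehn refinement: within a single piece, no two runs can both have length exactly $2g$, and in the closed case no run of length $2g$ can occur at all.

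The intended argument is a chain--flipping move. Suppose there are runs $r_{1},\dots,r_{s}$ in $P$ with $|r_{1}|=|r_{s}|=2g$, with all intermediate runs of length $2g-1$, and with consecutive runs meeting at vertices having exactly one hanging half--edge. (The case where some intermediate run is shorter, or some vertex has more hanging half--edges, should be handled by first choosing $r_{1},r_{s}$ to be consecutive $2g$--runs.) Replacing $r_{1}$ by its complementary path of length $2g$ keeps the length the same. However, the neighbouring run then loses a corner and becomes shortenable, and propagating along the chain gives a strictly shorter representative, which is a contradiction. In the closed case the same propagation goes all the way around the loop, so a single $2g$--run already yields a contradiction.

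Granting this, a subpath piece has at most $\mathfrak{he}(P)+1$ runs, at most one of which has length $2g$. Hence
\[
\mathfrak{e}(P)\le(2g-1)\mathfrak{he}(P)+2g.
\]
A closed piece has at most $\mathfrak{he}(P)$ runs, all of length $\le2g-1$, giving $\mathfrak{e}(P)\le(2g-1)\mathfrak{he}(P)$.

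\textbf{Main obstacle.} We expect the hardest step to be making the chain--flipping argument rigorous: checking that the replaced paths really produce a strictly shorter word, and correctly treating vertices with several hanging half--edges and shorter intermediate runs. Here we would follow the analysis of Birman--Series as reorganised in \cite[Lemma 5.18]{MageePuder2023}.
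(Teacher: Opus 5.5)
Your ingredients are the right ones: maximal runs along single $4g$--gons, counting runs via junction vertices, the bound $2g$ on each run, and the Birman--Series chain-flip. The paper gives no proof of its own; it quotes the lemma from Magee--Puder, and your argument follows the same idea as that source.

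\textbf{The gap.} The claim you ``grant'' is false. A piece can contain more than one run of length $2g$, and a closed piece can contain one. Your chain-flip yields a shorter word only when every run strictly between the two $2g$--runs has length exactly $2g-1$ and every junction in between has exactly one hanging half-edge. Two situations stop the propagation:
\begin{enumerate}
\item an intermediate run of length $\le 2g-2$, which after the previous flip has length $\le 2g-1$ and cannot be flipped without increasing length;
\item a junction with $\ge 2$ hanging half-edges, where the flipped path no longer extends the next run.
\end{enumerate}
Choosing $r_1,r_s$ to be consecutive $2g$--runs does not exclude either situation, and both occur in shortest words.

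\textbf{Counterexample.} Take $g=2$ and $w=aba^{-1}b^{-1}\,d^{-1}\,b^{-1}cdc^{-1}$, using the paper's cyclic order at vertices. On the side where the octagons read $R_2$, the runs are $aba^{-1}b^{-1}$, $d^{-1}$, $b^{-1}cdc^{-1}$. The three junctions carry $1,1,3$ hanging half-edges. On the other side every vertex has at least $3$ hanging half-edges. There is no long block and no chain, so by Birman--Series $w$ is a shortest representative of its conjugacy class. Yet the closed piece on the $R_2$ side has two runs of length $2g$. Similarly, inside $w^2$ the subpath piece reading one copy of $w$ has $\mathfrak{e}=9$ and $\mathfrak{he}=2$, with two $2g$--runs. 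The lemma still holds here ($9\le 3\cdot 5$ and $9\le 3\cdot 2+4$), but only because of slack that your count discards.

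\textbf{The repair.} By contraposition, your flipping argument proves the following gap condition. Between any two consecutive $2g$--runs of $P$, there is a run of length $\le 2g-2$ or a junction with at least two hanging half-edges. In the closed case this also holds going cyclically from a $2g$--run back to itself.

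Now amortise. Let the runs have lengths $\ell_1,\dots,\ell_s$, and let the junctions carry $h_1,\dots,h_J\ge 1$ hanging half-edges, so $\mathfrak{he}(P)=\sum_j h_j$. Then $s=J+\epsilon$, where $\epsilon=1$ for a subpath and $\epsilon=0$ for the closed loop (with $J\ge 1$ there). The quantity $\ell_i-(2g-1)$ equals $1$ for a $2g$--run, is $\le -1$ for a run of length $\le 2g-2$, and is $0$ otherwise. So the gap condition gives
\[
\sum_{i=1}^{s}\bigl(\ell_i-(2g-1)\bigr)\le \epsilon+\#\{j:h_j\ge 2\}\le \epsilon+(2g-1)\sum_{j=1}^{J}(h_j-1).
\]
Substituting $s=J+\epsilon$, this rearranges to $\mathfrak{e}(P)\le (2g-1)\mathfrak{he}(P)+2g\epsilon$, which is exactly the pair of stated bounds.

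The point is to count runs as exactly $J+\epsilon$, rather than bounding them by $\mathfrak{he}(P)+\epsilon$. The extra hanging half-edges at junctions with $h_j\ge 2$, together with the deficit from short runs, are what pay for additional runs of length $2g$.
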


\subsection{\label{subsec:Angle-structures}Angle structures}

Angle structures as introduced by Wise \cite{WiseAngles} offer a
framework for computing the Euler characteristic of a $2$--complex
via the combinatorial Gauss--Bonnet. One assigns an angle to each
corner of a face, giving the associated complex a notion of curvature
through which one can encode the Euler characteristic. For each face
$f,$ we write $\mathcal{C}(f)$ for the collection of corners defined
by two edges in the boundary of $f$ meeting and we write $\angle c$
for the associated angle. We have the following notion of curvature
of a face $f$, 
\[
\kappa(f)\overset{\mathrm{def}}{=}2\pi+\sum_{c\in\mathcal{C}(f)}\left(\angle c-\pi\right).
\]
For each vertex $v$, the link $\mathrm{Link}(v)$ is a graph that
encodes the local structure of the $2$--complex at $v$. It has
a vertex for every edge incident at $v$ and an edge between say $u_{e}$
and $u_{e'}$ if there is a corner $c$ of a face $f$ corresponding
to a meeting of $e$ and $e'$. We write $\mathcal{C}(v)$ for the
collection of all corners $c$ of faces $f$ that are defined by two
edges meeting at $v$ and write $\angle c$ again for the corresponding
angles. There is a notion of curvature at a vertex $v,$
\[
\kappa(v)\overset{\mathrm{def}}{=}2\pi-\pi\chi\left(\link\right)-\sum_{c\in\mathcal{C}(v)}\angle c.
\]
Then we have the following proposition from e.g. \cite[Theorem 2.3]{WiseAngles},
which gives the connection between Euler characteristic of a combinatorial
$2$--complex and the curvature definitions above. 
\begin{prop}[Combinatorial Gauss--Bonnet]
\label{prop:combinatorial gauss-bonnet}For any $2$ complex $X$
with an assigned angle structure, 
\[
2\pi\chi(X)=\sum_{f}\kappa(f)+\sum_{v}\kappa(v).
\]
\end{prop}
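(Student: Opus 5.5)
The plan is to expand both curvature sums and show that every angle term cancels, leaving only $V$, $E$ and $F$, the numbers of vertices, edges and faces of $X$, so that the identity reduces to $\chi(X)=V-E+F$. The one structural fact needed is that each corner $c$ belongs to exactly one face $f$ (so $c\in\mathcal{C}(f)$) and sits at exactly one vertex $v$ (so $c\in\mathcal{C}(v)$). Hence
\[
\bigsqcup_{f}\mathcal{C}(f)=\bigsqcup_{v}\mathcal{C}(v),
\]
and I write $C$ for the total number of corners.

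First I expand the face sum:
\[
\sum_{f}\kappa(f)=2\pi F+\sum_{c}\angle c-\pi C.
\]
Next I expand the vertex sum:
\[
\sum_{v}\kappa(v)=2\pi V-\pi\sum_{v}\chi(\link)-\sum_{c}\angle c.
\]
Adding the two, the angle sums cancel exactly, so the chosen angle structure plays no role. This leaves
\[
2\pi(F+V)-\pi C-\pi\sum_{v}\chi(\link).
\]

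It remains to compute $\sum_{v}\chi(\link)$. By definition $\link$ has one vertex for each edge-end (half-edge) incident at $v$. It has one edge for each corner at $v$. So $\chi(\link)$ equals the number of half-edges at $v$ minus $|\mathcal{C}(v)|$. Summing over $v$, every edge contributes exactly two half-edges, so
\[
\sum_{v}\chi(\link)=2E-C.
\]
Substituting gives $2\pi(F+V)-\pi C-2\pi E+\pi C=2\pi(V-E+F)=2\pi\chi(X)$, which is the claim.

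The only step needing care, and the one I expect to be the main obstacle, is the bookkeeping for degenerate configurations that occur in our complexes $X$. A loop edge at $v$ must contribute two vertices to $\link$, one for each end. A face boundary may pass through the same vertex or edge several times, and each passage must count as a separate corner, giving a separate edge of the link. To handle this uniformly, I would define corners and half-edges via the attaching maps of the faces, i.e.\ as pairs of consecutive edge-ends in the boundary word of each face. With that convention, both the corner partition and the half-edge count hold verbatim, and the computation above goes through unchanged.
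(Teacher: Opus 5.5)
Your proof is correct: the angle terms cancel because each corner lies in exactly one $\mathcal{C}(f)$ and exactly one $\mathcal{C}(v)$. The identity $\sum_{v}\chi(\mathrm{Link}(v))=2E-C$ holds once link vertices are taken to be edge-ends, which is the right reading here because $X$ has loop edges (compare the single-vertex component in Figure~\ref{fig:Gamma after quotient}, whose link is a $4g$-cycle). The paper gives no proof of its own and simply quotes \cite[Theorem 2.3]{WiseAngles}, whose proof is this same double count, so your argument matches the cited one.
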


\subsection{\label{subsec:Bounding}Proof of Proposition \ref{prop:bounding euler char of Gamma -k}}

From now, we fix any $(\sigma_{x},\pi_{i})\in\mathrm{Match}_{S_{k}}\left(w,k\right)$
arbitrarily. We assign to every corner of a face an angle of $\frac{\pi}{2g}.$
This choice of angle structure is consistent with a hyperbolic metric
of constant negative curvature on a closed genus $g$ surface. Support
for this choice of angle structure is below, in (\ref{eq: interior vertices curvature =00005Cle0}),
whereby it guarantees that we only need to bound vertex curvature
at `boundary vertices', at which stage we can employ Lemma \ref{lem: bound for e(P), he(P)}
to obtain a suitable upper bound. 

Now, suppose that $X=X\left(\sigma_{x},\pi_{i}\right)$ has $m$ faces.
Then, if a face $f$ has boundary reading out $R_{g}^{k_{i}},$ we
have 
\[
\kappa(f)=2\pi+4g\left(\frac{1-2g}{2g}\right)\pi k_{i},
\]
so that 
\begin{equation}
\sum_{f}\kappa(f)=2\pi m-\sum_{f}2(2g-1)\pi k_{i}=2\pi m-2(2g-1)\pi k.\label{eq: curvature of faces}
\end{equation}
Proposition \ref{prop:bounding euler char of Gamma -k} will follow
from the following lemma. 
\begin{lem}
\label{lem:curvature of vertices}With the assigned angle structure
on $X,$ 
\[
\sum_{v}\kappa(v)\le2(2g-2)\pi k.
\]
\end{lem}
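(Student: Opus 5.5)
The plan is to split the vertices of $X$ into \emph{interior vertices}, which do not lie on the image of the $w$--cycle, and \emph{boundary vertices}, which do. I would then bound $\kappa(v)$ separately on each class. Since every corner has angle $\frac{\pi}{2g}$, the defining formula becomes
\[
\kappa(v)=2\pi-\pi|V(\link)|+\pi\Bigl(1-\tfrac{1}{2g}\Bigr)|E(\link)|,
\]
so everything reduces to comparing the number of link vertices (edge-ends at $v$) with the number of link edges (corners at $v$).

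\textbf{Interior vertices.} At an interior vertex every incident edge-end comes from the $R$--cycles. The corners at $v$ respect the cyclic order $a_i$ out, $b_i$ in, $a_i$ in, $b_i$ out of the $4g$ half-edges. Since $\sigma_x=\tau_x$ and the $\pi_i$ are permutations, I would show that $\link$ is a disjoint union of cycles, each of which reads the full cyclic pattern of $4g$ labels, possibly several times. This holds because folding only identifies edge-ends with the same label and direction, and singleton-freeness forces each end to sit in at least two corners. Such a cycle has $|V|=|E|$ and $|E|\ge 4g$. Each component then contributes $-\frac{|E|}{2g}\pi\le-2\pi$, so $\kappa(v)\le 2\pi-2\pi\cdot\#\{\text{components}\}\le 0$. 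This is the claimed inequality that interior vertices have curvature $\le 0$.

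\textbf{Boundary vertices.} Here the link may contain paths rather than cycles. A path arises where a $w$--edge-end is glued to edge-ends from faces on only one side, or from no faces at all. I would organise the boundary of $X$ (the image of the $w$--cycle, drawn in blue in Figure~\ref{fig:Gamma after quotient}) into \emph{pieces}. A piece is a maximal stretch of the $w$--cycle along which consecutive corners are filled on a fixed side. Each piece is then mapped to a piece $P$ of the loop $\mathcal{L}_w$ in the annulus $A_\gamma$, in the sense of \S\ref{subsec:Combinatorial-characterisation-o}. The faces glued along the stretch develop locally as $4g$--gons of $U$, because the corner pattern is forced by the labels. A vertex inside a piece has a link containing a path from one $w$--end to the next through filled corners. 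Its curvature is then $\pi$ minus $\frac{\pi}{2g}$ times the number of corners on that side, and those corners are exactly the hanging half-edges of $P$. I would therefore try to prove the following: the total curvature at boundary vertices equals, up to a nonpositive error from the additional cycle components handled as in the interior case, a sum over pieces of terms of the form $\pi\,\mathfrak{e}(P)-\frac{\pi}{2g}\cdot(\text{corners used})$, plus a contribution of at most $2\pi$ from each \emph{external boundary vertex} where pieces meet. Applying Lemma~\ref{lem: bound for e(P), he(P)} then bounds $\mathfrak{e}(P)$ in terms of $\mathfrak{he}(P)$, so each piece's curvature is controlled by the number of face corners it consumes.

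\textbf{Main obstacle.} The hardest step is the final bookkeeping: converting the per-piece bounds into the global bound $2(2g-2)\pi k$. The idea is to charge each piece's positive curvature (at most $2\pi$ per piece from the additive $2g$ term in the lemma) against the corners of the faces it uses. The total number of corners is $4gk$. The inequality $\mathfrak{e}\le(2g-1)\mathfrak{he}+2g$ should show that each used corner carries net curvature at most $\frac{(2g-2)\pi}{2g}$, which over $4gk$ corners totals $2(2g-2)\pi k$. The delicate points are three. First, a $w$--edge may be folded onto several $R$--edges or onto other $w$--edges, so one must check that multiple coverings only decrease the curvature. Second, one must verify that corners are not double counted between adjacent pieces. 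Third, the case of a piece covering the whole loop uses the sharper first inequality of Lemma~\ref{lem: bound for e(P), he(P)} and must be treated separately. I would first check the charging argument on the example of Figure~\ref{fig:Gamma after quotient} before writing it in general.
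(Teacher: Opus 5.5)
\textbf{Gap: the final bookkeeping cannot close.} Your interior-vertex argument is fine, and so is the reduction of a piece's internal vertices to hanging half--edges and Lemma \ref{lem: bound for e(P), he(P)}. The problem is that the bound is sharp, and your scheme has no source of negative curvature at the ends of pieces. For a WR--piece $P$ that is not a full cycle, only its $\mathfrak{e}(P)-1$ internal vertices contribute, not $\mathfrak{e}(P)$, since the endpoints are external. These give
\[
\sum_{v\in V_{P}}\kappa(v)\le\pi\Bigl(\tfrac{2g-1}{2g}\bigl(\mathfrak{e}(P)-1\bigr)-\tfrac{\mathfrak{he}(P)}{2g}\Bigr)\le\tfrac{(2g-2)\pi}{2g}\bigl(\mathfrak{e}(P)+\mathfrak{he}(P)\bigr)+\tfrac{\pi}{2g},
\]
while $\sum_{P}\bigl(\mathfrak{e}(P)+\mathfrak{he}(P)\bigr)\le|E_{\mathrm{WR}}|+2|E_{\mathrm{RR}}|+2|E_{\mathrm{RRW}}|=4gk$.

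So the corner budget is already consumed at exactly the rate $\frac{(2g-2)\pi}{2g}$ you want. The leftover $\frac{\pi}{2g}$ per piece, coming from the additive $2g$ term, cannot be charged to corners. Example \ref{exa: -k example} shows this concretely. Both pieces have $\mathfrak{e}=4$ and $\mathfrak{he}=0$, and the budget $4gk=8$ is used exactly. The internal vertices then carry $\frac{9\pi}{2}$, while the target is $4\pi$. Allowing each external vertex up to $+2\pi$, as you propose, only makes this worse.

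\textbf{The missing idea: external vertices pay for the excess.} The external boundary vertices, where pieces end, carry \emph{negative} curvature. After the reduction described below, each such vertex has either $\chi(\mathrm{Link}(v))\ge 2$ or $\kappa(v)\le-\pi$. The extra link components come from WW--edges or from other pieces ending there. Consequently $\kappa(v)\le-\frac{\pi}{2g}$ if two WR/RRW--edges meet at $v$, and $\kappa(v)\le-\frac{2\pi}{2g}$ if at least four do. Counting piece endpoints then gives $\sum_{v\in V_w^{\mathrm{ext}}}\kappa(v)\le-\frac{\pi M}{2g}$ for $M$ WR--pieces (Lemma \ref{lem:bound on curvature external vertices}). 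This cancels the per-piece excess exactly.

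\textbf{The exceptional vertices.} The argument fails at vertices where two WR--pieces meet a single RRW--edge. There $\chi(\mathrm{Link}(v))=1$, and only one $-\frac{\pi}{2g}$ is available to pay for two pieces. The paper removes these vertices by the piece unzipping procedure. It splits the RRW--edge, which can only increase $\chi(\Gamma(\sigma_x,\sigma_x,\pi_i))$, and extends the two adjacent pieces through it. Neither negative external curvature nor unzipping appears in your plan, and without them the charging argument you flag as the main obstacle does not go through. The full--cycle case is handled separately, as you anticipated, using $\mathfrak{e}(P)\le(2g-1)\mathfrak{he}(P)$ together with $|V_w|+\mathfrak{he}(P)\le 4gk$.
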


Assuming Lemma \ref{lem:curvature of vertices}, we give the following
proof of Proposition \ref{prop:bounding euler char of Gamma -k}.
\begin{proof}[Proof of Proposition \ref{prop:bounding euler char of Gamma -k}]
 Combining Lemma \ref{lem:curvature of vertices} with (\ref{eq: curvature of faces})
and Proposition \ref{prop:combinatorial gauss-bonnet} yields that
\[
2\pi\chi(X)\le2\pi m-2(2g-1)\pi k+2(2g-2)\pi k.
\]
This gives 
\[
\chi(X)\le m-k
\]
and thus 
\[
\chi\left(\samgamsig\right)\le-k
\]
as required. 
\end{proof}
It remains only to prove Lemma \ref{lem:curvature of vertices}. We
will do so by partitioning the vertices in to \emph{interior }and
\emph{boundary }vertices. We first describe the structure of $X^{(1)}$
and in particular, how Lemma \ref{lem: bound for e(P), he(P)} must
be used to bound the curvature contribution from the boundary vertices.
The edges of $X^{(1)}$ are defined by the constituent edges in the
$R$--cycles (we call these $R$--edges) and the $w$--cycle (we
call these $w$--edges) that have been glued together to make up
the edge. We label these by types (and agreeing with the notation
in \cite{Magee2021} wherever possible): 
\begin{itemize}
\item \textbf{WR}: edges that are formed by gluing a single $R$--edge
to at least one $w$--edge.
\item \textbf{RR}: edges that are formed by gluing exactly two $R$--edges
together and no more. 
\item \textbf{WW}: edges that are formed by gluing only $w$--edges together
(possibly just a single $w$--edge).
\item \textbf{RRW}: edges that are formed by gluing two $R$--edges together
and at least one $w$--edge. 
\end{itemize}
Writing $E_{\mathrm{WR}}$ for the collection of WR--edges (and similarly
for the other types), we have the following equality
\begin{equation}
|E_{\mathrm{WR}}|+2|E_{\mathrm{RR}}|+2|E_{\mathrm{RRW}}|=4gk,\label{eq: sum of WR, RR, RRW edges}
\end{equation}
since there are $4gk$ edges in the $R$--cycles. 

The \emph{boundary }of $X$, denoted $\partial X$, is the collection
of paths covered by WR--edges, RRW--edges and WW--edges, together
with the vertices they are incident to. We call the vertices with
boundary edges incident \emph{boundary vertices} and all other vertices
are called \emph{interior vertices}. Every interior vertex $v$ has
at least $4g$ incident edges, following the same pattern as in Figure
\ref{fig: star of interior and internal boundary vertex}, possibly
with the entire pattern repeating some number of times. Thus, for
an interior vertex $v$, $\link$ is a cycle so $\chi\left(\link\right)=0$
and $|\mathcal{C}(v)|\ge4g.$ Hence for any interior vertex $v$,
\begin{equation}
\kappa(v)=2\pi-\frac{\pi}{2g}|\mathcal{C}(v)|\le0.\label{eq: interior vertices curvature =00005Cle0}
\end{equation}
We now deal with the boundary vertices. An \emph{internal }boundary
vertex is one that has exactly two WR--edges or RRW--edges incident
and no WW--edges. All other boundary vertices are called \emph{external}.
If an external boundary vertex $v$ has only WW--edges incident,
then $\mathcal{C}(v)=\emptyset.$ In particular, $\link$ is simply
a disjoint collection of vertices, one for each WW--edge incident
at $v$. Since $w$ is cyclically reduced, there are at least $2$
such edges and so $\chi\left(\link\right)\ge2.$ Hence, for these
vertices, 
\[
\kappa(v)=2\pi-\pi\chi\left(\link\right)\le0
\]
and they can be safely ignored. Thus, to bound 
\[
\sum_{v}\kappa(v),
\]
 we now only need to consider the curvature at the remaining external
boundary vertices and the internal boundary vertices. These are analogous
to the piece--adjacent junction discs and pre--piece discs respectively
in \cite[Section 4.5]{Magee2021}

A \emph{piece $P$ }of $\partial X$ is a subpath of the boundary
consisting of WR--edges or RRW--edges and the \emph{internal} boundary
vertices they are incident to. One should think of a piece as a (collection
of disjoint) subpath(s) of the $w$--cycle that have been glued to
$R$--edges in such a way that it does not intersect with any other
vertices of the $w$--cycle. We distinguish between pieces $P$ consisting
of WR--edges and those consisting of RRW--edges. In particular,
if a piece $P$ consists of RRW--edges, then every internal vertex
$v$ has $\chi\left(\link\right)\ge0$ and $|\mathcal{C}(v)|\ge4g,$
and thus also has 
\[
\kappa(v)\le0.
\]
\emph{Hence, we will ignore the contribution to the curvature from
the internal boundary vertices of pieces consisting of only RRW--edges.} 

Now consider pieces consisting of only WR--edges. In analogy with
\S\ref{subsec:Combinatorial-characterisation-o}, we write $\mathfrak{e}(P)$
for the number of WR--edges in $P$ and $\mathfrak{he}(P)$ for the
number of incident RR--edges at the vertices of $P$, which we refer
to as hanging half--edges. To employ Lemma \ref{lem: bound for e(P), he(P)},
for each piece $P$ we define a piece $\tilde{P}$ of $\mathcal{L}_{w}$
that follows the same collection of edges in $\mathcal{L}_{w}$ as
$P$ does in the $w$--loop. If the WR--edges in a piece $P$ each
consist of at least two $w$--edges, then we simply need to pick
one of the corresponding subpaths of the $w$--cycle to define the
edges of $\tilde{P}.$ The collection of half--edges that we take
depends on the orientation of the gluing. To be precise, we view the
$R$--cycles and the $w$--cycle as reading out $R^{k_{i}}$ and
$w$ when traversed clockwise from some fixed base point, as in Figure
\ref{fig:Gamma before quotient}. Consider first a piece $P$ consisting
of WR--edges, each formed by gluing exactly one $R$--edge to \emph{exactly
one} $w$--edge. We always view such a piece $P$ as corresponding
to some subpath of the $w$--cycle that is traversed following the
clockwise direction. We say $P$ has been glued clockwise if each
corresponding edge in the $w$--loop is glued to an edge with the
same label and direction in the $R$--cycles \emph{when traversed
clockwise} (e.g. an $a$--labeled edge in the $w$--loop is glued
to an $a$--labeled edge that is also directed clockwise in the $R$--cycles,
corresponding to $a$ in $R$ rather than $a^{-1}$). In this case,
for $\tilde{P},$ we take all hanging edges on the \emph{left} as
$\mathcal{L}_{w}$ is traversed in its assigned direction (i.e. corresponding
to reading $w$ along $\mathcal{L}_{w}$). Conversely, we say $P$
has been glued anticlockwise if each corresponding edge in the $w$--loop
is glued to an edge in the $R$--cycles that is directed anticlockwise,
and in such a case for $\tilde{P}$ we take all hanging edges on the
\emph{right }as we read $\mathcal{L}_{w}$ in reverse. 

Clearly, in both cases we have 
\[
\mathfrak{e}(P)=\mathfrak{e}\left(\tilde{P}\right)
\]
 and 
\[
\mathfrak{he}\left(\tilde{P}\right)\le\mathfrak{he}(P).
\]
The inequality here reflects that, at each internal boundary vertex,
the entire pattern of incident half--edges may repeat some number
of times. It follows from Lemma \ref{lem: bound for e(P), he(P)}
that, if $P$ consists of the whole $w$--loop (i.e. it is the only
piece of $\partial X$ and it is a cycle and all boundary vertices
are internal) then 
\[
\mathfrak{e}(P)\le(2g-1)\mathfrak{he}(P)
\]
and otherwise 
\[
\mathfrak{e}(P)\le(2g-1)\mathfrak{he}(P)+2g.
\]

\begin{example}
One can determine the hanging half edges incident at each internal
boundary vertex by considering Figure \ref{fig: star of interior and internal boundary vertex}.
In Figure \ref{fig:Gamma after quotient}, there is exactly one piece
$P$ corresponding the subpath reading $cbc^{-1}b^{-1}$ of the $w$--cycle
in Figure \ref{fig:Gamma before quotient}. This piece has been glued
anticlockwise. At the first internal vertex, labeled $u$, we have
`$c$ in' followed by `$b$ out' as we read $w$ cyclically clockwise.
Considering Figure \ref{fig: star of interior and internal boundary vertex}
(here, $c=a_{2}$ and $b=b_{1}$), we take the hanging edges to the
\emph{right} of these edges as we follow $c$ in and then $b$ out.These
are `$b_{2}$ in' and `$a_{2}$ out' (here, $b_{2}=d$ and $a_{2}=c$),
and we see in Figure \ref{fig:Gamma after quotient} that these are
exactly the emanating edges from this internal boundary vertex. 
\end{example}

In the case whereby there are multiple $w$--edges glued together
to form each WR--edge, we simply pick one of the corresponding subpaths
of the $w$--cycle. If the subpath we choose spells out the subword
$u$ of $w$ when traversed say clockwise, then the other constituent
subpaths of the piece that are glued with the same orientation must
also read $u$, and the subpaths glued with the opposite orientation
must read $u^{-1}$ when traversed clockwise. 

\begin{defn}
Given a piece $P$, a subpiece of $P$ is a chain of WR--edges that
are contained within the chain of WR--edges that define $P$, together
with the internal boundary vertices they are incident to (excluding
the endpoints) and the hanging half--edges at these vertices. We
denote the collection of subpieces by $\mathrm{sub}(P).$ 
\end{defn}

As above, any $p\in\mathrm{sub}(P)$ defines a piece $\tilde{p}$
which must satisfy Lemma \ref{lem: bound for e(P), he(P)}. In particular,
if $p$ is not a full cycle, then it must satisfy $\mathfrak{e}(p)\le(2g-1)\mathfrak{he}(p)+2g$
and otherwise it must satisfy $\mathfrak{e}(p)\le(2g-1)\mathfrak{he}(p)$.
For an internal boundary vertex $v$, we denote by $\mathfrak{he}(v)$
the number of hanging half--edges at $v.$ We have 
\[
|\mathcal{C}(v)|=\mathfrak{he}(v)+1
\]
and 
\[
\mathfrak{he}(P)=\sum_{v\in P}\mathfrak{he}(v).
\]
For such a vertex, $\link$ has a very specific form related to the
incident WR--edges and hanging half--edges, see Figure \ref{fig: star of interior and internal boundary vertex}.
Indeed, it is always an interval and thus has $\chi=1$. Hence, 
\begin{equation}
\kappa(v)=2\pi-\pi\chi\left(\link\right)-\frac{\pi}{2g}|\mathcal{C}(v)|\le\pi\left(1-\frac{\mathfrak{he}(v)+1}{2g}\right).\label{eq: curvature of internal boundary vertex}
\end{equation}

\begin{figure}[H]
\centering\begin{tikzpicture}[scale=1, baseline=(current bounding box.center)]

\def\r{3}
\coordinate (O) at (0,0); \filldraw (O) circle(2pt);

\pgfmathsetmacro{\step}{240/7}

\foreach \k in {0,...,7}{

    \pgfmathsetmacro{\i}{90 - \k*\step}
    \pgfmathsetmacro{\s}{0.7}

    \coordinate (P\k) at ({\s*\r*cos(\i)},{\s*\r*sin(\i)});
}

    \draw[->-] (O)--(P0);
\draw[->-] (O)--(P3);
\draw[->-] (O)--(P4);
\draw[->-] (O)--(P7);
\draw[->-] (P1)--(O);
\draw[->-] (P2)--(O);
\draw[->-] (P5)--(O);
\draw[->-] (P6)--(O);

\node at ($(O)!1.1!(P0)$) {$a_1$};
\node at ($(O)!1.1!(P1)$) {$b_1$};
\node at ($(O)!1.1!(P2)$) {$a_1$};
\node at ($(O)!1.1!(P3)$) {$b_1$};
\node at ($(O)!1.1!(P4)$) {$a_2$};
\node at ($(O)!1.1!(P5)$) {$b_2$};
\node at ($(O)!1.1!(P6)$) {$a_2$};
\node at ($(O)!1.1!(P7)$) {$b_2$};

\coordinate (u) at (-1.1, 0); \node at (u) {$\vdots$};
\coordinate (v) at (-0.4, 1.15); \node at (v){$\cdots$};

\coordinate (U) at (6,0); \filldraw (U) circle(2pt);

\foreach \k in {0,...,7}{
\pgfmathsetmacro{\s}{0.7}

    \pgfmathsetmacro{\i}{90 - 45*\k}

    \coordinate (C\k) at
        ($(U)+({\s*\r*cos(\i)},{\s*\r*sin(\i)})$);
}

\draw[blue, thick,  ->-] (C2)--(U);
\draw[->-] (U) -- (C3); 
\draw[->-] (U) -- (C4); 
\draw[blue, thick, ->-] (C5)--(U);

\node at ($(U)!1.1!(C2)$) {$a$};
\node at ($(U)!1.1!(C3)$) {$b$};
\node at ($(U)!1.1!(C4)$) {$c$};
\node at ($(U)!1.1!(C5)$) {$d$};

\end{tikzpicture}

\caption{\label{fig: star of interior and internal boundary vertex}On the
left is the star of an interior vertex in $X^{(1)}=\protect\samgamsig$.
It has $4g$ incident edges and the labels of these incident edges
are exactly the same as the labels of the incident half--edges for
the vertices in $A_{\gamma}$ as in \S\ref{subsec:Combinatorial-characterisation-o}.
Clearly, the link of this vertex is a cycle with $4g$ edges in total,
since there is a face between each pair of consecutive edges.\protect \\
On the right is the star of an internal boundary vertex $v$ (in some
piece $P$) in $X^{(1)},$ for $g=2$. The blue edges are WR--edges,
corresponding to some subword(s) `$ad^{-1}$' of $w$ glued in the
clockwise direction (or to $da^{-1}$, glued anticlockwise). Evidently
$\mathfrak{he}(v)=2$ and thus $|\mathcal{C}(v)|=3$. These half--edges
correspond to the half--edges incident at each corresponding vertex
in $\hat{P}$ in $\mathcal{L}_{w}$, containing only those on the
left when traversed in the assigned orientation (respectively, only
those on the right if glued anticlockwise and traversing $\mathcal{L}_{w}$
in reverse. Clearly, the link of this vertex is a path with $4$ vertices
and $3$ edges, and thus has $\chi=1.$ }

\end{figure}

\subsubsection{\label{subsec:Proof-of-Lemma INTERNAL}Proof of Lemma \ref{lem:curvature of vertices}
when all boundary vertices are internal}

Firstly, the lemma is obvious if the unique piece $P$ consists of
only RRW--edges, since then $\sum_{v}\kappa(v)\le0.$ So assume it
only consists of WR--edges. Denote the set of internal (and hence
all) boundary vertices by $V_{w}.$ From the previous discussion,
interior vertices give a non--positive contribution to curvature
so can be safely dispensed with. Thus, using (\ref{eq: curvature of internal boundary vertex}),
we have 
\begin{equation}
\sum_{v}\kappa(v)\le\pi\left(|V_{w}|-\sum_{v\in V_{w}}\frac{\mathfrak{he}(v)+1}{2g}\right)=\pi\left(\frac{2g-1}{2g}|V_{w}|-\sum_{v\in V_{w}}\frac{\mathfrak{he}(v)}{2g}\right).\label{eq: curvature in terms of V-=00005Cgamma}
\end{equation}
The unique piece $P$ of $\partial X$ is the entire boundary cycle.
Using Lemma \ref{lem: bound for e(P), he(P)}, we have that 
\[
\begin{aligned}|V_{w}|=\mathfrak{e}(P) & \le(2g-1)\mathfrak{he}(P)\\
 & =(2g-1)\left(\sum_{v\in V_{w}}\mathfrak{he}(v)\right)
\end{aligned}
\]
and so
\[
\frac{1}{2g(2g-1)}|V_{w}|\le\sum_{v}\frac{\mathfrak{he}(v)}{2g}.
\]
Combining this with (\ref{eq: curvature in terms of V-=00005Cgamma})
gives 
\begin{equation}
\sum_{v}\kappa(v)\le\pi\left(\frac{2g-2}{2g-1}\right)|V_{w}|.\label{eq: sum of curvature of boundary, INTERNAL ONLY}
\end{equation}
Using (\ref{eq: sum of WR, RR, RRW edges}), we have $|E_{\mathrm{WR}}|+2|E_{\mathrm{RR}}|\le4gk$.
Since hanging half--edges are RR--edges, and each such edge has
at most two endpoints that could contribute to $\mathfrak{he}(P)$,
we have $\mathfrak{he}(P)\le2|E_{\mathrm{RR}}|.$ We also have $|E_{\mathrm{WR}}|=|V_{w}|,$
giving 
\[
|V_{w}|+\mathfrak{he}(P)\le4gk.
\]
Lemma \ref{lem: bound for e(P), he(P)} gives $\frac{1}{2g-1}|V_{w}|\le\mathfrak{he}(P)$
and thus 
\[
\left(\frac{2g}{2g-1}\right)|V_{w}|\le4gk,
\]
which gives 
\[
|V_{w}|\le2(2g-1)k.
\]
Combining this with (\ref{eq: sum of curvature of boundary, INTERNAL ONLY})
proves Lemma \ref{lem:curvature of vertices}.

\subsubsection{\label{subsec:Proof-of-Lemma DEGENERATE}Proof of Lemma \ref{lem:curvature of vertices}
with degenerate boundary}

We now deal with the case of a degenerate boundary i.e. where the
boundary is not a single cycle. In this case we must employ the second
inequality in Lemma \ref{lem: bound for e(P), he(P)} for the internal
boundary vertices and deal with the external boundary vertices separately.
We denote these sets by $V_{w}^{\mathrm{int}}$ and $V_{w}^{\mathrm{ext}}$
respectively and we split our sum 
\[
\sum_{v\in V_{w}}\kappa(v)=\sum_{v\in V_{w}^{\mathrm{int}}}\kappa(v)+\sum_{v\in V_{w}^{\mathrm{ext}}}\kappa(v).
\]
We first describe a simplifying procedure, designed to deal with external
boundary vertices with higher curvature than we want. This procedure,
although having a slightly technical formal description, is a simple
idea designed to ensure we are efficient with our use of Lemma \ref{lem: bound for e(P), he(P)}.
\begin{rem}
These `high curvature' vertices are present in the current paper and
are not present in \cite{Magee2021}. They arise from having pieces
$P$ that a single RRW--edge which can force an external boundary
vertex $v$ to have $\chi\left(\link\right)=1$, which is not possible
without such edges. 
\end{rem}

\subsubsection*{Piece unzipping procedure}

As mentioned previously, here we deal with external boundary vertices
$v$ with $\chi\left(\link\right)=1$ and $\kappa(v)=\frac{-\pi}{2g}.$
These are vertices with exactly three WR or RRW--edges incident.
In fact, they must have exactly two WR--edges incident, labeled $f_{1}$
and $f_{2}$, and one RRW--edge incident whereby the RRW--edge is
formed by gluing two $R$--edges to at least two $w$--edges, as
in Figure \ref{fig: unzipping}. We label the pieces that $f_{1}$
and $f_{2}$ belong to by $P_{1}$ and $P_{2}$ and the piece containing
the RRW--edge by $P$ and the vertex by $v_{0}.$ We label the RRW--edge
by $e_{0}.$ Consider the following procedure performed on $X^{(1)}=\samgamsig,$
which we refer to as `unzipping' the RRW--edge $e_{0}.$ We split
$v_{0}$ in to two vertices, $v_{0}'$ and $v_{0}''$ and split $e_{0}$
in to two edges, $e_{0}'$ and $e_{0}''$. Equivalently, we remove
the arcs between the two $R$--edges that were glued together to
form $e_{0}$, as well as the arcs between the edges in the $w$--cycle
that are adjacent to the $w$--cycle edges that make up $f_{1}$
and $f_{2}$ respectively. Now, $v_{0}'$ and $v_{0}''$ are considered
internal boundary vertices, with $P_{1}$ now extended to include
$v_{0}'$ and $e_{0}'$ and $P_{2}$ now extended to include $v_{0}''$
and $e_{0}''$. 

There are now two distinct possibilities -- either $v_{1}$ was an
\emph{external }boundary vertex or it was an \emph{internal} boundary
vertex.. Firstly, if the endpoint $v_{1}$ of $e_{0}$ was an \emph{external
}boundary vertex with $\chi\left(\mathrm{Link}(v_{1})\right)=0,$
after unzipping it now has $\chi\left(\mathrm{Link}(v_{1})\right)=1$
and necessarily has $|\mathcal{C}(v)|\ge4g$, and hence it now has
$\kappa(v_{1})\le-\pi.$ Secondly, if $v_{1}$ was an external boundary
vertex with $\chi\left(\mathrm{Link}(v_{1})\right)=1$, then unzipping
either splits $v_{1}$ into two distinct vertices, in which case $P_{1}$
and $P_{2}$ extend through these vertices (i.e. they become internal
vertices) \emph{or }unzipping forces $v_{1}$ to have $\chi\left(\mathrm{Link}(v_{1})\right)\ge2$.
Finally, if $v_{1}$ was an an external boundary vertex with $\chi\left(\mathrm{Link}(v_{1})\right)\ge2,$
we are done after unzipping. An example of the first two cases can
be seen in Figure \ref{fig: unzipping}. It is clear this unzipping
procedure can only increase $\chi\left(\samgamsig\right).$ Finally,
if the endpoint $v_{1}$ of $e_{0}$ was an \emph{internal }boundary
vertex then we repeat the procedure, `unzipping' the RRW--edge, say
$e_{1}$, emanating from $v_{1}$. We iterate $j$ times until we
reach an external boundary vertex. 

\begin{figure}[H]
\centering\begin{tikzpicture}[scale=0.67, baseline=(current bounding box.center)]

\coordinate (A) at (0,0);
\coordinate (B) at (2,0);
\coordinate (C) at (4,0);
\coordinate (Au) at (0,2); \coordinate (Ad) at (0,-2);
\coordinate (Cu) at (4,2); \coordinate (Cd) at (4,-2);
\draw[red, transform canvas={xshift=1pt,yshift=0pt}] (Ad) -- (A);
\draw[red, transform canvas={xshift=0pt,yshift=-1pt}] (A) -- (B);
\draw[red, transform canvas={xshift=0pt,yshift=-1pt}] (B) -- (C);
\draw[red, transform canvas={xshift=-1pt,yshift=0pt}] (C) -- (Cd);
\draw[blue, transform canvas={xshift=1pt,yshift=0pt}](Au) -- (A); 
\draw[blue, transform canvas={xshift=0pt, yshift=1pt}] (A)--(B);
\draw[blue, transform canvas={xshift=0pt, yshift=1pt}] (B)--(C);
\draw[blue, transform canvas={xshift=-1pt, yshift=0pt}] (C)--(Cu);
\draw[thick, ->-] (A) -- node[midway, right]{$d$}(Au); 
\draw[thick, ->-] (Ad) -- node[midway, right]{$b$}(A);
\draw[thick, ->-] (A) -- node[midway, above]{$a$}(B);
\draw[thick, ->-] (C) -- node[midway, above]{$c$}(B);
\draw[thick, ->-] (C) -- node[midway, right]{$b$}(Cd);
\draw[thick, ->-] (Cu) -- node[midway, right]{$c$}(C);
\fill (A) circle (2pt);
\fill (B) circle (2pt);
\fill (C) circle (2pt);
\coordinate (he1) at (1.29, 0.71); \coordinate (he2) at (2, 1); \coordinate (he3) at (2.71, 0.71); 
\coordinate (he4) at (1.29, -0.71); \coordinate (he5) at (2,-1); \coordinate (he6) at (2.71, -0.71);
\coordinate (he7) at (3.29, 0.71);
\draw[->-] (B)--(he1); \draw[->-] (B)--(he2); \draw[->-] (he3)--(B);
\draw[->-] (he4)--(B); \draw[->-] (B)--(he5); \draw[->-] (B)--(he6);
\draw[->-] (he7)--(C);
\node [above left] at (he1) {$b$}; \node [above] at (he2) {$c$}; \node [above] at (he3) {$d$};
\node [below left] at (he4) {$b$}; \node [below] at (he5) {$a$}; \node [below right] at (he6) {$d$};
\node[above] at (he7) {$d$};
\node[left] at (A) {$v_0$};

\coordinate (D1) at (6,0.5); \coordinate (D2) at (6,-0.5);
\coordinate (E) at (8,0);
\coordinate (F) at (10,0);
\coordinate (Du) at (6,2); \coordinate (Dd) at (6,-2);
\coordinate (Fu) at (10,2); \coordinate (Fd) at (10,-2);
\draw[red, transform canvas={xshift=1pt,yshift=0pt}] (Dd) -- (D2);
\draw[red, transform canvas={xshift=0.71pt,yshift=-0.71pt}] (D2) -- (E);
\draw[red, transform canvas={xshift=0pt,yshift=-1pt}] (E) -- (F);
\draw[red, transform canvas={xshift=-1pt,yshift=0pt}] (F) -- (Fd);
\draw[blue, transform canvas={xshift=1pt,yshift=0pt}](Du) -- (D1); 
\draw[blue, transform canvas={xshift=0.71pt, yshift=0.71pt}] (D1)--(E);
\draw[blue, transform canvas={xshift=0pt, yshift=1pt}] (E)--(F);
\draw[blue, transform canvas={xshift=-1pt, yshift=0pt}] (F)--(Fu);
\draw[thick, ->-] (D1) -- node[midway, right]{$d$}(Du); 
\draw[thick, ->-] (Dd) -- node[midway, right]{$b$}(D2);
\draw[thick, ->-] (D1) -- node[midway, above left]{$a$}(E); \draw[thick, ->-] (D2) -- node[midway, below left]{$a$}(E);
\draw[thick, ->-] (E) -- node[midway, above]{$c$}(F);
\draw[thick, ->-] (F) -- node[midway, right]{$b$}(Fd);
\draw[thick, ->-] (Fu) -- node[midway, right]{$c$}(F);
\fill (D1) circle (2pt);\fill (D2) circle (2pt);
\fill (E) circle (2pt);
\fill (F) circle (2pt);
\coordinate (he11) at (7.29, 0.71); \coordinate (he21) at (8, 1); \coordinate (he31) at (8.71, 0.71); 
\coordinate (he41) at (7.29, -0.71); \coordinate (he51) at (8,-1); \coordinate (he61) at (8.71, -0.71);
\coordinate (he71) at (9.29, 0.71);
\draw[->-] (E)--(he11); \draw[->-] (E)--(he21); \draw[->-] (he31)--(E);
\draw[->-] (he41)--(E); \draw[->-] (E)--(he51); \draw[->-] (E)--(he61);
\draw[->-] (he71)--(F);
\node [above left] at (he11) {$b$}; \node [above] at (he21) {$c$}; \node [above] at (he31) {$d$};
\node [below left] at (he41) {$b$}; \node [below] at (he51) {$a$}; \node [below right] at (he61) {$d$};
\node[above] at (he71) {$d$};
\node[left] at (D1) {$v_0'$}; \node[left] at (D2) {$v_0''$};

\coordinate (G1) at (12,0.5); \coordinate (G2) at (12,-0.5);
\coordinate (H1) at (14,0.25); \coordinate (H2) at (14,-0.25);
\coordinate (I) at (16,0);
\coordinate (Gu) at (12,2); \coordinate (Gd) at (12,-2);
\coordinate (Iu) at (16,2); \coordinate (Id) at (16,-2);
\draw[red, transform canvas={xshift=1pt,yshift=0pt}] (Gd) -- (G2);
\draw[red, transform canvas={xshift=0.71pt,yshift=-0.71pt}] (G2) -- (H2);
\draw[red, transform canvas={xshift=0.71pt,yshift=-0.71pt}] (H2) -- (I);
\draw[red, transform canvas={xshift=-1pt,yshift=0pt}] (I) -- (Id);
\draw[blue, transform canvas={xshift=1pt,yshift=0pt}](Gu) -- (G1); 
\draw[blue, transform canvas={xshift=0.71pt, yshift=0.71pt}] (G1)--(H1);
\draw[blue, transform canvas={xshift=0.71pt, yshift=0.71pt}] (H1)--(I);
\draw[blue, transform canvas={xshift=-1pt, yshift=0pt}] (I)--(Iu);
\draw[thick, ->-] (G1) -- node[midway, right]{$d$}(Gu); 
\draw[thick, ->-] (Gd) -- node[midway, right]{$b$}(G2);
\draw[thick, ->-] (G1) -- node[midway, above left]{$a$}(H1); \draw[thick, ->-] (G2) -- node[midway, below left]{$a$}(H2);
\draw[thick, ->-] (H1) -- node[midway, above]{$c$}(I); \draw[thick, ->-] (H2) -- node[midway, below]{$c$}(I);
\draw[thick, ->-] (I) -- node[midway, right]{$b$}(Id);
\draw[thick, ->-] (Iu) -- node[midway, right]{$c$}(I);
\fill (H1) circle (2pt);\fill (H2) circle (2pt);
\fill (G1) circle (2pt); \fill (G2) circle (2pt);
\fill (I) circle (2pt);
\coordinate (he111) at (13.29, 0.71); \coordinate (he211) at (14, 1); \coordinate (he311) at (14.71, 0.71); 
\coordinate (he411) at (13.29, -0.71); \coordinate (he511) at (14,-1); \coordinate (he611) at (14.71, -0.71);
\coordinate (he711) at (15.29, 0.71);
\draw[->-] (H1)--(he111); \draw[->-] (H1)--(he211); \draw[->-] (he311)--(H1);
\draw[->-] (he411)--(H2); \draw[->-] (H2)--(he511); \draw[->-] (H2)--(he611);
\draw[->-] (he711)--(I);
\node [above left] at (he111) {$b$}; \node [above] at (he211) {$c$}; \node [above] at (he311) {$d$};
\node [below left] at (he411) {$b$}; \node [below] at (he511) {$a$}; \node [below right] at (he611) {$d$};
\node[above] at (he711) {$d$};
\node[left] at (G1) {$v_0'$}; \node[left] at (G2) {$v_0''$};
\node[right] at (I) {$v_2$};

\coordinate (X1) at (18,0.5); \coordinate (X2) at (18,-0.5);
\coordinate (Y1) at (20,0.5); \coordinate (Y2) at (20,-0.5);
\coordinate (Z1) at (22,0.5); \coordinate (Z2) at (22,-0.5);
\coordinate (Xu) at (18,2); \coordinate (Xd) at (18,-2);
\coordinate (Zu) at (22,2); \coordinate (Zd) at (22,-2);
\draw[red, transform canvas={xshift=1pt,yshift=0pt}] (Xd) -- (X2);
\draw[red, transform canvas={xshift=0pt,yshift=-1pt}] (X2) -- (Y2);
\draw[red, transform canvas={xshift=0pt,yshift=-1pt}] (Y2) -- (Z2);
\draw[red, transform canvas={xshift=-1pt,yshift=0pt}] (Z2) -- (Zd);
\draw[blue, transform canvas={xshift=1pt,yshift=0pt}](Xu) -- (X1); 
\draw[blue, transform canvas={xshift=0pt, yshift=1pt}] (X1)--(Y1);
\draw[blue, transform canvas={xshift=0pt, yshift=1pt}] (Y1)--(Z1);
\draw[blue, transform canvas={xshift=-1pt, yshift=0pt}] (Z1)--(Zu);
\draw[thick, ->-] (X1) -- node[midway, right]{$d$}(Xu); 
\draw[thick, ->-] (Xd) -- node[midway, right]{$b$}(X2);
\draw[thick, ->-] (X1) -- node[midway, above left]{$a$}(Y1); \draw[thick, ->-] (X2) -- node[midway, below left]{$a$}(Y2);
\draw[thick, ->-] (Y1) -- node[midway, above]{$c$}(Z1); \draw[thick, ->-] (Y2) -- node[midway, below]{$c$}(Z2);
\draw[thick, ->-] (Z2) -- node[midway, right]{$b$}(Zd);
\draw[thick, ->-] (Zu) -- node[midway, right]{$c$}(Z1);
\fill (X1) circle (2pt);\fill (X2) circle (2pt);
\fill (Y1) circle (2pt); \fill (Y2) circle (2pt);
\fill (Z1) circle (2pt); \fill (Z2) circle (2pt);
\coordinate (he1111) at (19.29, 0.91); \coordinate (he2111) at (20, 1); \coordinate (he3111) at (20.71, 0.91); 
\coordinate (he4111) at (19.29, -0.91); \coordinate (he5111) at (20,-1); \coordinate (he6111) at (20.71, -0.91);
\coordinate (he7111) at (21.49, 0.91);
\draw[->-] (Y1)--(he1111); \draw[->-] (Y1)--(he2111); \draw[->-] (he3111)--(Y1);
\draw[->-] (he4111)--(Y2); \draw[->-] (Y2)--(he5111); \draw[->-] (Y2)--(he6111);
\draw[->-] (he7111)--(Z1);
\node [above left] at (he1111) {$b$}; \node [above] at (he2111) {$c$}; \node [above] at (he3111) {$d$};
\node [below left] at (he4111) {$b$}; \node [below] at (he5111) {$a$}; \node [below right] at (he6111) {$d$};
\node[above] at (he7111) {$d$};
\node[left] at (X1) {$v_0'$}; \node[left] at (X2) {$v_0''$};
\node[right] at (Z1) {$v_2'$}; \node[right] at (Z2) {$v_2''$};

\end{tikzpicture}

\caption{\label{fig: unzipping}`Unzipping' a chain of RRW--edges. In both
diagrams, we unzip along the central RRW--edges labeled by $a$ and
$c$. The other edges are RW--edges (except for the hanging half--edges).
The coloured lines are to indicate the distinct subpaths of the $w$--cycle
that have been glued to $R$--edges to form the edges of $X^{(1)}.$
In the above diagram, the final vertex $v_{2}$ splits in to two vertices
as a result of the unzipping. In the below diagram, there is an additional
subpath of the $w$--cycle (in green) which means that, after unzipping
the $c$--labeled edge ($e_{2}$ in the description of the procedure),
the vertex $v_{2}$ does not split in to two vertices. }
\medskip{}

\centering\begin{tikzpicture}[scale=0.67, baseline=(current bounding box.center)]

\coordinate (A) at (0,0);
\coordinate (B) at (2,0);
\coordinate (C) at (4,0);
\coordinate (Au) at (0,2); \coordinate (Ad) at (0,-2);
\coordinate (Cu) at (4,2); \coordinate (Cd) at (4,-2);
\draw[red, transform canvas={xshift=1pt,yshift=0pt}] (Ad) -- (A);
\draw[red, transform canvas={xshift=0pt,yshift=-1pt}] (A) -- (B);
\draw[red, transform canvas={xshift=0pt,yshift=-1pt}] (B) -- (C);
\draw[red, transform canvas={xshift=-1pt,yshift=0pt}] (C) -- (Cd);
\draw[blue, transform canvas={xshift=1pt,yshift=0pt}](Au) -- (A); 
\draw[blue, transform canvas={xshift=0pt, yshift=1pt}] (A)--(B);
\draw[blue, transform canvas={xshift=0pt, yshift=1pt}] (B)--(C);
\draw[blue, transform canvas={xshift=-1pt, yshift=0pt}] (C)--(Cu);
\draw[green, transform canvas={xshift=1.7pt,yshift=0pt}](Au) -- (A); 
\draw[green, transform canvas={xshift=0pt, yshift=1.7pt}] (A)--(B);
\draw[green, transform canvas={xshift=0pt, yshift=1.7pt}] (B)--(C);
\draw[green, transform canvas={xshift=1pt, yshift=0pt}] (C)--(Cd);
\draw[thick, ->-] (A) -- node[midway, right]{$d$}(Au); 
\draw[thick, ->-] (Ad) -- node[midway, right]{$b$}(A);
\draw[thick, ->-] (A) -- node[midway, above]{$a$}(B);
\draw[thick, ->-] (C) -- node[midway, above]{$c$}(B);
\draw[thick, ->-] (C) -- node[midway, right]{$b$}(Cd);
\draw[thick, ->-] (Cu) -- node[midway, right]{$c$}(C);
\fill (A) circle (2pt);
\fill (B) circle (2pt);
\fill (C) circle (2pt);
\coordinate (he1) at (1.29, 0.71); \coordinate (he2) at (2, 1); \coordinate (he3) at (2.71, 0.71); 
\coordinate (he4) at (1.29, -0.71); \coordinate (he5) at (2,-1); \coordinate (he6) at (2.71, -0.71);
\coordinate (he7) at (3.29, 0.71);
\draw[->-] (B)--(he1); \draw[->-] (B)--(he2); \draw[->-] (he3)--(B);
\draw[->-] (he4)--(B); \draw[->-] (B)--(he5); \draw[->-] (B)--(he6);
\draw[->-] (he7)--(C);
\node [above left] at (he1) {$b$}; \node [above] at (he2) {$c$}; \node [above] at (he3) {$d$};
\node [below left] at (he4) {$b$}; \node [below] at (he5) {$a$}; \node [below right] at (he6) {$d$};
\node[above] at (he7) {$d$};
\node[left] at (A) {$v_0$};

\coordinate (D1) at (6,0.5); \coordinate (D2) at (6,-0.5);
\coordinate (E) at (8,0);
\coordinate (F) at (10,0);
\coordinate (Du) at (6,2); \coordinate (Dd) at (6,-2);
\coordinate (Fu) at (10,2); \coordinate (Fd) at (10,-2);
\draw[red, transform canvas={xshift=1pt,yshift=0pt}] (Dd) -- (D2);
\draw[red, transform canvas={xshift=0.71pt,yshift=-0.71pt}] (D2) -- (E);
\draw[red, transform canvas={xshift=0pt,yshift=-1pt}] (E) -- (F);
\draw[red, transform canvas={xshift=-1pt,yshift=0pt}] (F) -- (Fd);
\draw[blue, transform canvas={xshift=1pt,yshift=0pt}](Du) -- (D1); 
\draw[blue, transform canvas={xshift=0.71pt, yshift=0.71pt}] (D1)--(E);
\draw[blue, transform canvas={xshift=0pt, yshift=1pt}] (E)--(F);
\draw[blue, transform canvas={xshift=-1pt, yshift=0pt}] (F)--(Fu);
\draw[green, transform canvas={xshift=1.7pt,yshift=0pt}](Du) -- (D1); 
\draw[green, transform canvas={xshift=1.2pt, yshift=1.2pt}] (D1)--(E);
\draw[green, transform canvas={xshift=0pt, yshift=1.7pt}] (E)--(F);
\draw[green, transform canvas={xshift=1pt, yshift=0pt}] (F)--(Fd);
\draw[thick, ->-] (D1) -- node[midway, right]{$d$}(Du); 
\draw[thick, ->-] (Dd) -- node[midway, right]{$b$}(D2);
\draw[thick, ->-] (D1) -- node[midway, above left]{$a$}(E); \draw[thick, ->-] (D2) -- node[midway, below left]{$a$}(E);
\draw[thick, ->-] (E) -- node[midway, above]{$c$}(F);
\draw[thick, ->-] (F) -- node[midway, right]{$b$}(Fd);
\draw[thick, ->-] (Fu) -- node[midway, right]{$c$}(F);
\fill (D1) circle (2pt);\fill (D2) circle (2pt);
\fill (E) circle (2pt);
\fill (F) circle (2pt);
\coordinate (he11) at (7.29, 0.71); \coordinate (he21) at (8, 1); \coordinate (he31) at (8.71, 0.71); 
\coordinate (he41) at (7.29, -0.71); \coordinate (he51) at (8,-1); \coordinate (he61) at (8.71, -0.71);
\coordinate (he71) at (9.29, 0.71);
\draw[->-] (E)--(he11); \draw[->-] (E)--(he21); \draw[->-] (he31)--(E);
\draw[->-] (he41)--(E); \draw[->-] (E)--(he51); \draw[->-] (E)--(he61);
\draw[->-] (he71)--(F);
\node [above left] at (he11) {$b$}; \node [above] at (he21) {$c$}; \node [above] at (he31) {$d$};
\node [below left] at (he41) {$b$}; \node [below] at (he51) {$a$}; \node [below right] at (he61) {$d$};
\node[above] at (he71) {$d$};
\node[left] at (D1) {$v_0'$}; \node[left] at (D2) {$v_0''$};

\coordinate (G1) at (12,0.5); \coordinate (G2) at (12,-0.5);
\coordinate (H1) at (14,0.25); \coordinate (H2) at (14,-0.25);
\coordinate (I) at (16,0);
\coordinate (Gu) at (12,2); \coordinate (Gd) at (12,-2);
\coordinate (Iu) at (16,2); \coordinate (Id) at (16,-2);
\draw[red, transform canvas={xshift=1pt,yshift=0pt}] (Gd) -- (G2);
\draw[red, transform canvas={xshift=0.71pt,yshift=-0.71pt}] (G2) -- (H2);
\draw[red, transform canvas={xshift=0.71pt,yshift=-0.71pt}] (H2) -- (I);
\draw[red, transform canvas={xshift=-1pt,yshift=0pt}] (I) -- (Id);
\draw[blue, transform canvas={xshift=1pt,yshift=0pt}](Gu) -- (G1); 
\draw[blue, transform canvas={xshift=0.71pt, yshift=0.71pt}] (G1)--(H1);
\draw[blue, transform canvas={xshift=0.71pt, yshift=0.71pt}] (H1)--(I);
\draw[blue, transform canvas={xshift=-1pt, yshift=0pt}] (I)--(Iu);
\draw[green, transform canvas={xshift=1.7pt,yshift=0pt}](Gu) -- (G1); 
\draw[green, transform canvas={xshift=1.2pt, yshift=1.2pt}] (G1)--(H1);
\draw[green, transform canvas={xshift=1.2pt, yshift=1.2pt}] (H1)--(I);
\draw[green, transform canvas={xshift=1pt, yshift=0pt}] (I)--(Id);
\draw[thick, ->-] (G1) -- node[midway, right]{$d$}(Gu); 
\draw[thick, ->-] (Gd) -- node[midway, right]{$b$}(G2);
\draw[thick, ->-] (G1) -- node[midway, above left]{$a$}(H1); \draw[thick, ->-] (G2) -- node[midway, below left]{$a$}(H2);
\draw[thick, ->-] (H1) -- node[midway, above]{$c$}(I); \draw[thick, ->-] (H2) -- node[midway, below]{$c$}(I);
\draw[thick, ->-] (I) -- node[midway, right]{$b$}(Id);
\draw[thick, ->-] (Iu) -- node[midway, right]{$c$}(I);
\fill (H1) circle (2pt);\fill (H2) circle (2pt);
\fill (G1) circle (2pt); \fill (G2) circle (2pt);
\fill (I) circle (2pt);
\coordinate (he111) at (13.29, 0.71); \coordinate (he211) at (14, 1); \coordinate (he311) at (14.71, 0.71); 
\coordinate (he411) at (13.29, -0.71); \coordinate (he511) at (14,-1); \coordinate (he611) at (14.71, -0.71);
\coordinate (he711) at (15.29, 0.71);
\draw[->-] (H1)--(he111); \draw[->-] (H1)--(he211); \draw[->-] (he311)--(H1);
\draw[->-] (he411)--(H2); \draw[->-] (H2)--(he511); \draw[->-] (H2)--(he611);
\draw[->-] (he711)--(I);
\node [above left] at (he111) {$b$}; \node [above] at (he211) {$c$}; \node [above] at (he311) {$d$};
\node [below left] at (he411) {$b$}; \node [below] at (he511) {$a$}; \node [below right] at (he611) {$d$};
\node[above] at (he711) {$d$};
\node[left] at (G1) {$v_0'$}; \node[left] at (G2) {$v_0''$};
\node[right] at (I) {$v_2$};

\end{tikzpicture}

\end{figure}

Henceforth, we will now be considering $\chi(X)$ \emph{after }the
piece unzipping procedure. Given any piece $P$, denote by $V_{P}$
its set of internal boundary vertices, so that 
\[
|V_{P}|=\mathfrak{e}(P)-1
\]
and 
\[
\sum_{P}\sum_{v\in V_{P}}\kappa(v)=\sum_{v\in V_{w}^{\mathrm{int}}}\kappa(v).
\]
Just as in (\ref{eq: curvature in terms of V-=00005Cgamma}), for
each piece $P,$ we have 
\begin{equation}
\sum_{v\in V_{P}}\kappa(v)\le\pi\left(\frac{2g-1}{2g}|V_{P}|-\sum_{v\in V_{P}}\frac{\mathfrak{he}(v)}{2g}\right).\label{eq: sum of internal curvature, degenerate, in terms of V_p}
\end{equation}
By Lemma \ref{lem: bound for e(P), he(P)}, we have 
\[
\begin{aligned}|V_{P}|+1 & \le(2g-1)\mathfrak{he}(P)+2g\\
 & =(2g-1)\sum_{v\in V_{P}}\mathfrak{he}(v)+2g.
\end{aligned}
\]
Hence, 
\[
|V_{P}|+(1-2g)\le(2g-1)\sum_{v\in V_{P}}\mathfrak{he}(v)
\]
and thus 
\begin{equation}
\frac{1}{2g(2g-1)}|V_{P}|-\frac{1}{2g}\le\sum_{v\in V_{P}}\frac{\mathfrak{he}(v)}{2g}.\label{eq: bound on size of V_p in terms of he(P)}
\end{equation}
Combining (\ref{eq: sum of internal curvature, degenerate, in terms of V_p})
with (\ref{eq: bound on size of V_p in terms of he(P)}) gives 
\[
\begin{aligned}\sum_{v\in V_{P}}\kappa(v) & \le\pi\left(\frac{2g-1}{2g}|V_{P}|-\frac{1}{2g(2g-1)}|V_{P}|+\frac{1}{2g}\right)\\
 & =\pi\left(\left(\frac{2g-2}{2g-1}\right)|V_{P}|+\frac{1}{2g}\right).
\end{aligned}
\]
Recall that the only pieces that positively contribute to $\sum_{v}\kappa(v)$
are those consisting only of WR--edges. Supposing there are $M$
such pieces in total, this gives 
\begin{equation}
\sum_{v\in V_{w}^{\mathrm{int}}}\kappa(v)\le\pi\left(\frac{2g-2}{2g-1}\right)\sum_{P}|V_{P}|+\frac{\pi M}{2g}.\label{eq: total internal curvature, degenerate}
\end{equation}
We also have the following lemma, which bounds the curvature contribution
from the external boundary vertices.
\begin{lem}
\label{lem:bound on curvature external vertices}We have 
\[
\sum_{v\in V_{w}^{\mathrm{ext}}}\kappa(v)\le\frac{-\pi M}{2g}.
\]
\end{lem}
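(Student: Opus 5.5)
We prove the bound by showing that every external boundary vertex has curvature at most $-\frac{\pi}{2g}$, and then charging each of the $M$ WR--pieces to a distinct external boundary vertex.

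\textbf{Step 1: classify external boundary vertices after unzipping.} Recall the formula
\[
\kappa(v)=2\pi-\pi\chi\left(\link\right)-\frac{\pi}{2g}|\mathcal{C}(v)|,
\]
and that interior vertices and external vertices with only WW--edges already have $\kappa\le0$. Since $v$ is external, it is not the interior of a single piece. So either some WW--edge meets $v$, or at least three WR/RRW--edges meet it, or exactly one boundary edge meets it, which is impossible for a cyclically reduced $w$.

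Each corner at $v$ lies in a face bounded by $R$--edges. The boundary edges cut $\link$ into pieces that are intervals or isolated points, one isolated point for each WW--edge. Hence $\chi\left(\link\right)$ equals the number of such components. The boundary sectors at $v$ arise from the boundary half--edges, and there are at least $3$ of them when $v$ is external.

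\textbf{Step 2: bound $\kappa(v)$ by cases on $\chi\left(\link\right)$.}
\begin{itemize}
\item If $\chi\left(\link\right)\ge2$, then $\kappa(v)\le0-\frac{\pi}{2g}|\mathcal{C}(v)|$. This is at most $-\frac{\pi}{2g}$ whenever $\mathcal{C}(v)\neq\emptyset$.
\item If $\chi\left(\link\right)=1$, the only remaining configuration after piece unzipping is the one treated there, and it has $|\mathcal{C}(v)|\ge4g$ or larger. This gives $\kappa(v)\le-\pi$.
\item If $\chi\left(\link\right)=0$, then $\link$ is a union of cycles and $|\mathcal{C}(v)|\ge4g$, so $\kappa(v)\le-\frac{\pi}{2g}\cdot(\text{extra corners})$.
\end{itemize}
The case $\chi\left(\link\right)=0$ needs more care. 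We would show that the link contains at least one full cycle of $4g$ corners plus at least one additional corner from each interval component, and deduce $\kappa(v)\le-\frac{\pi}{2g}$ times the number of interval components.

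\textbf{Step 3: the charging argument.} In general we would aim for the estimate
\[
\kappa(v)\le-\frac{\pi}{2g}\cdot\tfrac12\#\{\text{WR--piece endpoints at }v\}.
\]
Each interval component of $\link$ is bounded by two boundary half--edges, and each boundary half--edge that ends a WR--piece contributes at least one corner adjacent to it. Each WR--piece $P$ has two endpoints, which may coincide, both external. Summing the estimate over external vertices, each piece contributes $2\cdot\frac12$, so the total is at most $-\frac{\pi M}{2g}$.

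\textbf{Main obstacle.} The hardest step is verifying the per-vertex inequality uniformly. In particular, when $\chi\left(\link\right)=1$ or $2$ and $v$ is the endpoint of several WR--pieces, we must check that $2\pi-\pi\chi\left(\link\right)$ does not cancel the corner deficit. The approach would be to count $\chi\left(\link\right)$ as $\#\text{intervals}+\#\text{points}$ and the corners as the number of interval edges. We would then show
\[
2-\chi\left(\link\right)\le\frac{1}{2g}\left(|\mathcal{C}(v)|-\tfrac12\#\text{endpoints}\right),
\]
using that $\chi\left(\link\right)\ge2$ whenever there are two or more components. The single-interval case is exactly what the unzipping procedure removed. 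In the one-component cycle case, $\chi\left(\link\right)=0$ and $|\mathcal{C}(v)|\ge4g$ absorb everything, since $4g\ge 2g\cdot2+\tfrac12\#\text{endpoints}$ fails only if there are many endpoints, and each endpoint forces extra corners.
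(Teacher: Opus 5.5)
Your proposal is correct and follows essentially the paper's argument. Like the paper, you bound $\kappa(v)$ through $\chi(\mathrm{Link}(v))$ and the corner count, rely on the unzipping procedure to get $\kappa(v)\le-\pi$ in the remaining $\chi(\mathrm{Link}(v))=1$ case, and charge the two endpoints of each WR--piece to external vertices.

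Your bookkeeping via interval components of $\mathrm{Link}(v)$, giving $|\mathcal{C}(v)|\ge\tfrac12\#\{\text{WR half-edges at }v\}$ when $\chi(\mathrm{Link}(v))\ge2$, is slightly tidier than the paper's $2M_1+4M_2\ge2M$, which does not literally account for vertices with more than four incident WR--edges. Your worry about the $\chi(\mathrm{Link}(v))=0$ case is unfounded: there are then no interval components or WR--endpoints at $v$, so $\kappa(v)\le0$ is all that is needed.
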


\begin{proof}
Firstly, the external boundary vertices $v$ with $\chi\left(\link\right)=0$
are those with no WR--edges incident and only RRW--edges incident.
These necessarily have $|\mathcal{C}(v)|\ge4g$ and thus have $\kappa(v)\le0.$
At each remaining external boundary vertex, there are either $2$
incident WR--edges (or RRW--edges) or at least $4$ incident WR--edges
(or RRW--edges). If there are $2$, then the curvature of such a
vertex is at most $\pi\left(2-\chi\left(\link\right)\right)+\frac{-\pi}{2g}$
and if there are $\ge4$, then the curvature is at most $\pi\left(2-\chi\left(\link\right)\right)+\frac{-2\pi}{2g}.$
After the unzipping procedure, either $\chi\left(\link\right)\ge2$
or already has $\kappa(v)\le-\pi$. Thus, in any case, we can always
bound $\kappa(v)$ by either $\frac{-\pi}{2g}$ or $\frac{-2\pi}{2g}$,
depending on the number of incident WR--edges (or RRW--edges). Suppose
there are $M_{1}$ external boundary vertices with exactly $2$ incident
WR--edges (or RRW--edges) and $M_{2}$ external boundary vertices
with at least $4$ incident WR--edges (or RRW--edges). The total
number of incident WR--edges and RRW--edges at external boundary
vertices is clearly at least $2M,$ since this accounts for the number
of WR--incidences only (since every piece consisting of only WR--edges
has two endpoints), hence 
\[
2M_{1}+4M_{2}\ge2M.
\]
Therefore, the total curvature from external boundary vertices is
at most 
\[
\frac{-M_{1}\pi}{2g}+\frac{-2M_{2}\pi}{2g}\le\frac{-\pi M}{2g}.
\]
\end{proof}
Combining Lemma \ref{lem:bound on curvature external vertices} with
(\ref{eq: total internal curvature, degenerate}) yields the estimate
\[
\sum_{v\in V_{w}}\kappa(v)\le\pi\left(\frac{2g-2}{2g-1}\right)\sum_{P}|V_{P}|.
\]
It then remains only to bound $\sum_{P}|V_{P}|.$ This is the following
lemma, which completes our proof of Lemma \ref{lem:curvature of vertices},
and hence our proof of Proposition \ref{prop:bounding euler char of Gamma -k}
and thus of Theorem \ref{thm: bound on integral main}. 
\begin{lem}
We have 
\[
\sum_{P}|V_{P}|\le2(2g-1)k,
\]
with equality if and only if every piece $P$ satisfies $\mathfrak{e}(P)=(2g-1)\mathfrak{he}(P)+2g.$ 
\end{lem}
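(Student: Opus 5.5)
We mimic the counting argument from the case where all boundary vertices are internal, now applied piece by piece. The sum runs over the $M$ pieces $P$ that consist only of WR--edges, and these are disjoint. So their WR--edges are distinct elements of $E_{\mathrm{WR}}$, and
\[
\sum_{P}\mathfrak{e}(P)=\sum_{P}\left(|V_{P}|+1\right)\le|E_{\mathrm{WR}}|.
\]

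Next we count hanging half--edges. Every hanging half--edge at an internal vertex of a WR--piece is an RR--edge end. An RR--edge has at most two endpoints, and each endpoint is an incidence at a single internal boundary vertex, so it is counted at most once across all pieces. This gives
\[
\sum_{P}\mathfrak{he}(P)\le2|E_{\mathrm{RR}}|.
\]
Combining the two displays with (\ref{eq: sum of WR, RR, RRW edges}), which still holds after unzipping, yields
\[
\sum_{P}\left(|V_{P}|+1\right)+\sum_{P}\mathfrak{he}(P)\le4gk.
\]
Unzipping only splits RRW--edges into RR/WR type incidences without changing the count of $R$--edges, so the identity $|E_{\mathrm{WR}}|+2|E_{\mathrm{RR}}|+2|E_{\mathrm{RRW}}|=4gk$ is preserved. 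The bookkeeping at that step needs care: some unzipped edges become WR--edges attached to extended pieces.

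We then feed in the second inequality of Lemma \ref{lem: bound for e(P), he(P)}, via the lifted pieces $\tilde P$. In rearranged form it reads
\[
\mathfrak{he}(P)\ge\frac{|V_{P}|+1-2g}{2g-1}.
\]
Substituting this into the previous display gives
\[
\sum_{P}\left(|V_{P}|+1+\frac{|V_{P}|+1-2g}{2g-1}\right)\le4gk.
\]
Each summand equals $\frac{2g}{2g-1}|V_{P}|+\frac{2g-2g}{2g-1}$, which is exactly $\frac{2g}{2g-1}|V_{P}|$ because the constants cancel. Hence
\[
\frac{2g}{2g-1}\sum_{P}|V_{P}|\le4gk,
\]
which is $\sum_P|V_P|\le2(2g-1)k$.

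For the equality statement, the only step above in which a loss occurs, apart from the edge-counting inequalities, is the use of Lemma \ref{lem: bound for e(P), he(P)}. That step is tight precisely when $\mathfrak{e}(P)=(2g-1)\mathfrak{he}(P)+2g$ for every $P$. I would state the equality claim as: equality forces every piece to satisfy this, with the counting inequalities also tight. If the "if" direction is needed, I would check that tightness of the counting is automatic in the extremal configuration; this is likely the delicate point. The main obstacle overall is making sure that, after the unzipping procedure, hanging half--edges and WR--edges are not double counted between pieces and that (\ref{eq: sum of WR, RR, RRW edges}) still applies.
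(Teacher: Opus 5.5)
Your proof of the inequality is correct and is the paper's argument. The paper rearranges the second bound of Lemma \ref{lem: bound for e(P), he(P)} piece by piece into $|V_P|=\mathfrak{e}(P)-1\le\frac{2g-1}{2g}\left(\mathfrak{e}(P)+\mathfrak{he}(P)\right)$. It then sums against $\sum_P\left(\mathfrak{e}(P)+\mathfrak{he}(P)\right)\le|E_{\mathrm{WR}}|+2|E_{\mathrm{RR}}|+2|E_{\mathrm{RRW}}|=4gk$. You sum first and then substitute, which is the same computation.

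You are right to be cautious about the equality clause. Your main formulation is the correct one: equality forces every piece to satisfy $\mathfrak{e}(P)=(2g-1)\mathfrak{he}(P)+2g$, and it also forces the counting inequalities to be tight. Your fallback would fail, because tightness of the counting is not automatic, so the ``if'' direction is false as stated. The paper's proof simply asserts the ``if and only if'' and, like yours, only justifies necessity.

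For a counterexample, start from the gluing in Example \ref{exa: -k example} and take $k=2$ with all $\pi_i=\iota(\mathrm{id})$. Glue the second $R$--cycle to itself as the standard one-vertex genus--$2$ surface. Lemma \ref{lem: distinctness of vertices} allows this, since each $\sigma_x$--block pairs an index from $\{1,2\}$ with one from $\{3,4\}$.
\begin{itemize}
\item Both pieces still have $\mathfrak{e}(P)=4$ and $\mathfrak{he}(P)=0$, so each satisfies the equality $\mathfrak{e}(P)=(2g-1)\mathfrak{he}(P)+2g$.
\item Nevertheless $\sum_P|V_P|=6<12=2(2g-1)k$.
\item The gap comes from the four new RR--edges: they count in $2|E_{\mathrm{RR}}|$ but meet no piece.
\end{itemize}
So state the equality condition only as a necessary one. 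That is the only direction used later, in the proof of Theorem \ref{thm: -k-1 bound}.
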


\begin{proof}
We observe that for each piece $P$, we have 
\begin{equation}
|V_{P}|=\mathfrak{e}(P)-1\le\frac{2g-1}{2g}\left(\mathfrak{e}(P)+\mathfrak{he}(P)\right).\label{eq: rearrangement of e(P), he(P) inequality}
\end{equation}
Indeed, this is simply a rearrangement of 
\[
\mathfrak{e}(P)\le(2g-1)\mathfrak{he}(P)+2g,
\]
which is true by Lemma \ref{lem: bound for e(P), he(P)}. Clearly,
equality holds in (\ref{eq: rearrangement of e(P), he(P) inequality})
if and only if equality holds in Lemma \ref{lem: bound for e(P), he(P)}
for the piece $P$. We then have 
\[
\sum_{P}\left(\mathfrak{e}(P)+\mathfrak{he}(P)\right)\le|E_{\mathrm{WG}}|+2|E_{\mathrm{WW}}|+2|E_{\mathrm{WWG}}|=4gk,
\]
and so 
\[
\begin{aligned}\sum_{P}|V_{P}| & \le\frac{2g-1}{2g}\sum_{P}\left(\mathfrak{e}(P)+\mathfrak{he}(P)\right)\\
 & \le2(2g-1)k,
\end{aligned}
\]
with equality if and only if $\mathfrak{e}(P)=(2g-1)\mathfrak{he}(P)+2g$
for every piece $P$. 
\end{proof}
There are several conditions that must be satisfied for any word $w$
representing $\gamma$ to have a value of $k$ and a $\left(\sigma_{x},\pi_{i}\right)\in\mathrm{Match}_{S_{k}}\left(\sigma_{x},\pi_{i}\right)$
such that $\chi\left(\samgamsig\right)=-k.$
\begin{itemize}
\item Since we use $|E_{\mathrm{WR}}|+2|E_{\mathrm{RR}}|+2|E_{\mathrm{RRW}}|=4gk$
and $\sum_{p}\mathfrak{e}(P)=|E_{\mathrm{WR}}|+|E_{\mathrm{RRW}}|,$
then there can be no RRW--edges. 
\item We would require every external boundary vertex $v$ to have $\kappa(v)=\frac{-\pi}{2g}$
of $\kappa(v)=\frac{-2\pi}{2g}$, depending on the number of incident
RW--edges. Therefore, there can be no RR--edges incident to any
external boundary vertex. 
\item We also use $\sum_{p}\mathfrak{he}(P)\le2|E_{\mathrm{RR}}|.$ This
inequality is strict unless every RR--edge is incident to a piece
at both endpoints (that is to say, there are no interior vertices). 
\item All boundary components consisting of WW--edges only must be paths
so as to contribute exactly $0$ to the curvature. 
\item All pieces in any putative gluing with $\chi\left(\samgamsig\right)=-k$
must satisfy either $\mathfrak{e}(P)=(2g-1)\mathfrak{he}(P)$ (if
there is a unique piece) or $\mathfrak{e}(P)=(2g-1)\mathfrak{he}(P)+2g$
(if there is more than one piece).
\end{itemize}
In light of the final point above, we can give the proof of Theorem
\ref{thm: -k-1 bound}. 
\begin{proof}[Proof of Theorem \ref{thm: -k-1 bound}]
 Clearly if (up to cyclic permutations) a word $w$ is the unique
shortest representative of the conjugacy class of $\gamma\in\Gamma_{g},$
then no subword of $w$ (or any of its cyclic permutations) can be
exactly half of the relator $R_{g}$ (of length $2g$). Thus, it is
not possible for a unique piece $P$ to satisfy $\mathfrak{e}(P)=(2g-1)\mathfrak{he}(P)$
or for any other pieces $P$ to satisfy $\mathfrak{e}(P)=(2g-1)\mathfrak{he}(P)+2g.$ 
\end{proof}
The above points give a very rigid structure for words $w$ that may
have such a gluing. Nevertheless, we finish with an example of a word
$w$ that is a shortest representative of the conjugacy class of some
$\gamma\in\Gamma_{g}$ for which there is a $k$ and a gluing $(\sigma_{x},\pi_{i})\in\mathrm{Match}_{S_{k}}\left(w,k\right)$
with $\chi\left(\samgamsig\right)=-k.$ This illustrates that, if
one wanted to improve on the bound in Theorem \ref{thm: bound on integral main}
using the methods of this paper, then there must be other considerations
made as to which shortest representative $w$ to choose. 
\begin{example}
\label{exa: -k example}Take $w=[a,b]d^{-1}ab[d,c]d^{-1}ab$ and $k=1.$
This has a gluing $(\sigma_{x},\pi_{i})\in\mathrm{Match}_{S_{k}}\left(w,k\right)$
with $\chi\left(\samgamsig\right)=-1$ as seen below. 

\begin{figure}[H]
\centering\begin{tikzpicture}[scale=1, baseline=(current bounding box.center)]

\coordinate (v1) at ( 0.0000,  2.0000);
\coordinate (v2) at ( 1.4142,  1.4142);
\coordinate (v3) at ( 2.0000,  0.0000);
\coordinate (v4) at ( 1.4142, -1.4142);
\coordinate (v5) at ( 0.0000, -2.0000);
\coordinate (v6) at (-1.4142, -1.4142);
\coordinate (v7) at (-2.0000,  0.0000);
\coordinate (v8) at (-1.4142,  1.4142);

\filldraw (v1) circle (2pt); \node[above] at (v1) {$u$};
\filldraw (v2) circle (2pt);
\filldraw (v3) circle (2pt);
\filldraw (v4) circle (2pt);
\filldraw (v5) circle (2pt); \node[below] at (v5) {$v$};
\filldraw (v6) circle (2pt);
\filldraw (v7) circle (2pt);
\filldraw (v8) circle (2pt);

\draw[->-] (v1) -- node[midway, above right]{$a$} (v2); 
\draw[->-] (v2) -- node[midway, above right]{$b$}(v3);
\draw[->-] (v4) -- node[midway, below right]{$a$} (v3);
\draw[->-] (v5) --  node[midway, below right]{$b$}(v4);
\draw[->-] (v5) -- node[midway, below left]{$c$}(v6);
\draw[->-] (v6) -- node[midway, below left]{$d$}(v7);
\draw[->-] (v8) -- node[midway, above left]{$c$}(v7);
\draw[->-] (v1) -- node[midway, above left]{$d$} (v8);

\coordinate (u1) at (0, -0.66); \filldraw (u1) circle(2pt);
\coordinate (u2) at (0, 0.66); \filldraw (u2) circle(2pt);

\draw[->-] (u1)-- node[midway, right]{$d$} (v5); 
\draw[->-] (u1)-- node[midway, right]{$a$} (u2); 
\draw[->-] (u2)-- node[midway, right]{$b$} (v1); 

\end{tikzpicture}

\caption{\label{fig: -k exactly gluing}This gluing for $k=1$ and $w=[a,b]d^{-1}ab[d,c]d^{-1}ab$
has two pieces, one reading $[a,b]$ glued clockwise and the other
reading $[d,c]$ glued anticlockwise. There are two external boundary
vertices labeled by $u$ and $v$ and the `$d^{-1}ab$' path through
the middle is made up of WW--edges. Clearly here $\chi=-1=-k.$ }
\end{figure}
\end{example}

\printbibliography

@article{Jones,
   author = {Vaughan Frederick Randal Jones},
   journal = {Subfactors: Proceedings of the Taniguchi Symposium on Operator Algebras},
   pages = {259--267},
   title = {The {P}otts model and the symmetric group},
   year = {1994},
}

@article{CollinsWeingartenShort,
   author = {Benoit Collins and Sho Matsumoto and Jonathan Novak},
   journal = {Notices of the American Math. Soc},
   month = {9},
   pages = {734--745},
   title = {The {W}eingarten Calculus},
   volume = {69},
   year = {2021},
}

@article{CollinsSniady2006,
   author = {Benoit Collins and Piotr Sniady},
   journal = {Comm.  Math. Phys.},
   month = {6},
   pages = {773--795},
   title = {Integration with Respect to the Haar Measure on Unitary, Orthogonal and Symplectic Group},
   volume = {264},
   year = {2006},
}

@article{Collins2003,
   author = {Benoit Collins},
   journal = {International Mathematics Research Notices},
   pages = {953--982},
   title = {Moments and cumulants of polynomial random variables on unitary groups, the {I}tzykson--{Z}uber integral, and free probability},
   volume = {2003},
   year = {2003},
}

@article{PuderParzanchewski2012,
   author = {Doron Puder and Ori Parzanchevski},
   journal = {Journal of the American Mathematical Society},
   month = {2},
   pages = {63--97},
   title = {Measure preserving words are primitive},
   volume = {28},
   year = {2015},
}

@article{Magee2021,
   author = {Michael Magee},
   journal = {Geometry and Toplology},
      title = {Random unitary representations of surface groups {II}: The large $n$ limit},
   year = {2025},
  pages = {1237--1281}
}

@article{Rota,
   author = {Gian-Carlo Rota},
   journal = {Zeitschrift f{\"u}r Wahrscheinlichkeitstheorie und Verwandte Gebiete},
   pages = {340--368},
   title = {On the foundations of combinatorial theory {I}. Theory of {M}{\"o}bius functions},
   volume = {2},
   year = {1964},
}

@article{HananyPuder,
   author = {Liam Hanany and Doron Puder},
   journal = {International Mathematics Research Notices},
   pages = {9221--9297},
   title = {Word measures on symmetric groups},
   volume = {11},
   year = {2023},
}

@article{Weingarten,
   author = {Donald Weingarten},
   journal = {J. Math. Phys.},
   pages = {999--1001},
   title = {Asymptotic behavior of group integrals in the limit of infinite rank},
   volume = {19},
   year = {1978},
}

@article{Xu,
   author = {Feng Xu},
   journal = {Communications in Mathematical Physics},
   pages = {287--307},
   title = {A random matrix model from two dimensional {Y}ang--{M}ills theory},
   volume = {190},
   year = {1997},
}

@article{Cassidy2023,
      title={Projection formulas and a refinement of {S}chur--{W}eyl--{J}ones duality for symmetric groups}, 
      author={Ewan Cassidy},
      year={2025},
      journal={preprint, arXiv 2312.01839} 
}

@article{Stallings,
  title={Topology of finite graphs},
  author={Stallings, John R},
  journal={Inventiones Mathematicae},
  volume={71},
  number={3},
  pages={551--565},
  year={1983},
  publisher={Springer-Verlag Berlin/Heidelberg}
}

@article{Nica,
author = {Nica, Alexandru},
title = {On the number of cycles of given length of a free word in several random permutations},
journal = {Random Structures \& Algorithms},
volume = {5},
number = {5},
pages = {703--730},
doi = {https://doi.org/10.1002/rsa.3240050506},
year = {1994}
}

@article{LinialPuder,
  title={Word maps and spectra of random graph lifts},
  author={Nathan Linial and Doron Puder},
  journal={Random Structures \& Algorithms},
  year={2010},
  volume={37},  
  pages={100--135}
}

@article{MageePuder2023, 
title={The Asymptotic Statistics of Random Covering Surfaces}, 
volume={11},
DOI={10.1017/fmp.2023.13}, 
journal={Forum of Mathematics, Pi}, 
author={Michael Magee and Doron Puder}, 
year={2023}, 
pages={e15}
}

@article{Magee22,
  title={Random unitary representations of surface groups {I}: Asymptotic expansions},
  author={Magee, Michael},
  journal={Communications in Mathematical Physics},
  volume={391},
  number={1},
  pages={119--171},
  year={2022},
  publisher={Springer}
}

@article{Cassidy2,
   title={Random permutations acting on {$k$}--tuples have near--optimal spectral gap for {$k=\mathrm{poly}(n)$}}, 
      author={Ewan Cassidy},
      year={2025},
      journal={preprint, arXiv 2412.13941} 
}

@inproceedings{SamSnowden,
  title={Stability patterns in representation theory},
  author={Sam, Steven V and Snowden, Andrew},
  booktitle={Forum of Mathematics, Sigma},
  volume={3},
  pages={e11},
  year={2015},
  organization={Cambridge University Press}
}

@article{MageePuderVanHandel,
  title={Strong convergence of uniformly random permutation representations of surface groups},
  author={Magee, Michael and Puder, Doron and van Handel, Ramon},
  journal={preprint, arXiv 2504.08988},
  year={2025}
}

@article{Etingof,
  title={Representation theory in complex rank, {I}},
  author={Etingof, Pavel},
  journal={Transformation Groups},
  volume={19},
  pages={359--381},
  year={2014},
  publisher={Springer}
}

@article{Dehn,
   title={Transformation der {K}urven auf zweiseitigen {F}l{\"a}chen},
   author={Max Dehn},
   journal={Mathematische Annalen},
  volume={72},
  number={3},
  pages={413--421},
  year={1912},
  publisher={Springer}
}

@article{BirmanSeries,
  title={Dehn's algorithm revisited, with applications to simple curves on surfaces},
  author={Birman, Joan and Series, Caroline},
  journal={Combinatorial group theory and topology (Alta, Utah, 1984)},
  volume={111},
  pages={451--478},
  year={1987}
}

@article{WiseAngles,
  title={Sectional curvature, compact cores, and local quasiconvexity},
  author={Wise, Daniel T},
  journal={Geometric \& Functional Analysis},
  volume={14},
  number={2},
  pages={433--468},
  year={2004},
  publisher={Springer}
}

@article{LiebeckShalev,
  title={Fuchsian groups, coverings of {R}iemann surfaces, subgroup growth, random quotients and random walks},
  author={Liebeck, Martin W and Shalev, Aner},
  journal={Journal of Algebra},
  volume={276},
  number={2},
  pages={552--601},
  year={2004},
  publisher={Elsevier}
}

@article{Hurwitz,
  title={{\"U}ber die {A}nzahl der {R}iemann'schen {F}l{\"a}chen mit gegebenen {V}erzweigungspunkten},
  author={Hurwitz, Adolf},
  journal={Mathematische Annalen},
  volume={55},
  number={1},
  pages={53--66},
  year={1901},
  publisher={Springer}
}

@article{Klukowski,
    author ={Adam Klukowski},
   title={In preparation},
   year={2026}
}

\noindent Ewan Cassidy, Department of Pure Mathematics and Mathematical
Statistics, University of Cambridge, Wilberforce Road, Cambridge,
CB3 0WB

\noindent egc45@cam.ac.uk
\end{document}